\newtheorem{theorem}{Theorem}[section]
\newtheorem{lemma}[theorem]{Lemma}
\newtheorem{proposition}[theorem]{Proposition}
\newtheorem{fact}[theorem]{Fact}
\newtheorem{corollary}[theorem]{Corollary}
\newtheorem{conjecture}[theorem]{Conjecture}
\theoremstyle{remark}
\newtheorem{remark}[theorem]{Remark}
\numberwithin{equation}{section}
\newcommand{\mhp}{\vskip -.3cm}
\newcommand{\qp}{\vskip .12cm}
\newcommand{\hp}{\vskip .2cm}
\newcommand{\p}{\vskip .4cm}
\newcommand{\Z}{\mathbb{Z}}
\newcommand{\Q}{\mathbb{Q}}
\newcommand{\R}{\mathbb{R}}
\newcommand{\C}{\mathbb{C}}
\newcommand{\Ql}{\bar{\mathbb{Q}}_{\ell}}
\newcommand{\CA}{\mathcal{A}}
\newcommand{\CB}{\mathcal{B}}
\newcommand{\CL}{\mathcal{L}}
\newcommand{\CT}{\mathcal{T}}
\newcommand{\CX}{\mathcal{X}}
\newcommand{\CO}{\mathcal{O}}
\newcommand{\bG}{\mathbf{G}}
\newcommand{\bT}{\mathbf{T}}
\newcommand{\bS}{\mathbf{S}}
\newcommand{\bH}{\mathbf{H}}
\newcommand{\Ga}{\mathbb{G}_a}
\newcommand{\til}{\tilde}
\newcommand{\bsl}{\backslash}
\newcommand{\ra}{\rightarrow}
\newcommand{\sra}{\twoheadrightarrow}
\newcommand{\xra}{\xrightarrow}
\newcommand{\Lie}{\operatorname{Lie}}
\newcommand{\rank}{\operatorname{rank}}
\newcommand{\depth}{\operatorname{depth}}
\newcommand{\Ad}{\operatorname{Ad}}
\newcommand{\ad}{\operatorname{ad}}
\newcommand{\Spec}{\operatorname{Spec}}
\newcommand{\Frob}{\mathrm{Frob}}
\newcommand{\Irr}{\operatorname{Irr}}
\newcommand{\Tr}{\operatorname{Tr}}
\newcommand{\Gal}{\operatorname{Gal}}
\newcommand{\Id}{\operatorname{Id}}
\newcommand{\Lg}{\mathfrak{g}}
\newcommand{\Lu}{\mathfrak{u}}
\newcommand{\Lt}{\mathfrak{t}}
\newcommand{\matr}[1]{\left[\begin{matrix}#1\end{matrix}\right]}
\newcommand{\dsp}{\displaystyle}
\begin{document}

\title{Components of affine Springer fibers}
\author{Cheng-Chiang Tsai}
\thanks{This work is supported by National Science Foundation [DMS-1128155] and [DMS-1601282].}
\email{chchtsai@mit.edu}
\address{77 Massachusetts Avenue, Massachusetts Institute of Technology, Cambridge, MA 02139, USA}
\begin{abstract}
Let $\bG$ be a connected split reductive group over a field of characteristic zero or sufficiently large characteristic, $\gamma_0\in(\Lie\bG)((t))$ be any topologically nilpotent regular semisimple element, and $\gamma=t\gamma_0$. Using methods from $p$-adic orbital integrals, we show that the number of components of the Iwahori affine Springer fiber over $\gamma$ modulo $Z_{\bG((t))}(\gamma)$ is equal to the order of the Weyl group.
\end{abstract}
\makeatletter
\patchcmd{\@maketitle}
  {\ifx\@empty\@dedicatory}
  {\ifx\@empty\@date \else {\vskip3ex \centering\footnotesize\@date\par\vskip1ex}\fi
   \ifx\@empty\@dedicatory}
  {}{}
\patchcmd{\@adminfootnotes}
  {\ifx\@empty\@date\else \@footnotetext{\@setdate}\fi}
  {}{}{}
\makeatother
\maketitle 

\tableofcontents

\section{Introduction}

In classical Springer theory, a Springer representation is realized on the top (co)homology of a Springer fiber, which is the same as the vector space generated by a basis indexed by its irreducible components. Springer theory also computes this vector space in terms of the (already computed, see e.g. \cite{Ca93}) Springer representations. As an affine generalization, the affine Weyl group also acts on the homology of an Iwahori affine Springer fiber \cite{Lu96}. One then wonders what can be said about the components of Iwahori affine Springer fibers.\p

In this article, we fix $\bG$ a connected split reductive group over a field $k$ of characteristic zero or sufficiently large characteristic (see Appendix \ref{secchar}). In fact, in Appendix \ref{appred} we explain how to reduce our result over a general field to the case over a finite field, and from now on we assume $k$ is a finite field. We also use throughout the article the notations $F=k((t))$, $\CO=k[[t]]$, $G=\bG(F)$ and $\Lg=(\Lie\bG)(F)$, as well as take $\bar{k}$ an algebraic closure of $k$ and $F^{ur}$ a maximal unramified extension of $F$ with residue field $\bar{k}$. For $\gamma\in\Lg$, the Iwahori affine Springer fiber $\CX_{\gamma}$ over $\gamma$ (\cite{KL88}, see (\ref{ASF})) is an ind-variety with $\CX_{\gamma}(k)=\{g\in G/I\,|\,\Ad(g^{-1})\gamma\in\Lie I\}$ where $I\subset G$ is an Iwahori subgroups; we refer the readers to Section \ref{secBT} for the definitions where we put $I=G_{x,\ge 0}$ and $\Lie I=\Lg_{x,\ge0}$. We have $\CX_{\gamma}$ is finite-dimensional iff $\gamma$ is regular semisimple (\cite[\S3]{KL88}, and see the proof of Lemma \ref{equi} for the non-simply-connected case). Suppose this is the case. A dimension formula for $\CX_{\gamma}$ was conjectured in \cite[\S0]{KL88} and proved by Bezrukavnikov \cite{Be96} (see (\ref{Bezr})). As $\gamma$ approaches $0$, $\dim\CX_{\gamma}$ grows unboundedly. It can be natural to ask whether the number of components of $\CX_{\gamma}$, say modulo the natural centralizer action to make it finite, also grows unboundedly. It turns out that the number remains constant when $\gamma$ approaches $0$.\p

More precisely, let $\gamma_0$ be a topologically nilpotent regular semisimple element in $\Lg$. Here $\gamma_0$ is called topologically nilpotent (see Lemma \ref{TN}) if it is conjugate to an element $\delta\in(\Lie\bG)(k[[t]])$ whose reduction in $(\Lie\bG)(k)$ is nilpotent. Let $\gamma=t\gamma_0$. Denote by $W$ the Weyl group of $\bG$. The main result of this article is

\begin{theorem}\label{main} Let $\gamma$ be as above. Consider the set of irreducible components of $\CX_{\gamma}\times_{\Spec k}\Spec\bar{k}$ under the action of the centralizer of $\gamma$. There are $|W|$ distinct orbits, and they are all stabilized by $\Gal(\bar{k}/k)$.
\end{theorem}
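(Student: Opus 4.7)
The plan is to count the top-dimensional irreducible components of $\CX_\gamma$ via a $p$-adic orbital integral argument, exploiting the special structure $\gamma = t\gamma_0$ with $\gamma_0$ topologically nilpotent. The starting point is the Iwahori-orbit stratification
$$\CX_\gamma(\bar k) \;=\; \bigsqcup_{w \in \widetilde W} \CX_\gamma^w, \qquad \CX_\gamma^w := \CX_\gamma \cap (IwI/I),$$
indexed by the extended affine Weyl group $\widetilde W$. For each $w$, the count $|\CX_\gamma^w(k)|$ computes (up to a standard volume factor) the orbital integral of $\gamma$ against the characteristic function of $IwI \cdot x_0$, and these assemble into $O_\gamma(\mathbf 1_{\Lie I})$. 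The irreducible components of maximal dimension will be identified by determining which $w$ give top-dimensional $\CX_\gamma^w$, and then analyzing the centralizer action on this index set.

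The first step is to produce a $w$-wise dimension formula for $\CX_\gamma^w$ that refines Bezrukavnikov's total dimension formula (\ref{Bezr}). This uses an analysis of the filtration of $\Lie I$ by Moy-Prasad subquotients and the action of $\Ad(IwI)$ on $\gamma$, and identifies an explicit subset $\widetilde W^{top} \subset \widetilde W$ labelling the top-dimensional strata. The hypothesis $\gamma = t\gamma_0$ plays a crucial role here: multiplication by $t$ uniformly shifts the valuation of each affine root component of $\gamma$ by one, so the combinatorial description of $\widetilde W^{top}$ becomes insensitive to the detailed choice of $\gamma_0$ beyond topological nilpotence and regular semisimplicity, essentially because the depth profile of $\gamma$ is pushed uniformly deep inside the positive part of the Iwahori.

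The second step is the centralizer count. The centralizer of $\gamma$ acts on $\CX_\gamma$, and on $\widetilde W^{top}$ through a finite-index sublattice of the translation subgroup $X_*(\bT) \subset \widetilde W$; the claim is that $\widetilde W^{top}$ splits into exactly $|W|$ orbits under this action, naturally indexed by the finite Weyl group. Converting ``top strata'' into ``top components'' then requires showing that no top stratum lies in the closure of another, which I expect to follow from an iterated-affine-bundle description of each stratum combined with the dimension formula. Galois-stability of each orbit is then automatic, since every stratum $\CX_\gamma^w$ is defined over $k$. The main obstacle will be Step 1: the stratum-by-stratum dimension formula, which requires a careful local computation of the kernel and cokernel of $\ad(\gamma)$ on each Moy-Prasad graded piece of $\Lie I$, and must be shown to yield precisely the subset $\widetilde W^{top}$ with the claimed $|W|$-orbit structure.
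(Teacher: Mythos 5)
Your proposal takes a genuinely different route from the paper. The paper never stratifies $\CX_\gamma$ by Iwahori double cosets and never attempts a cell-by-cell dimension formula; instead it works entirely through the orbital-integral interpretation. Specifically, it reads the component count off equation (\ref{GKMe}), which translates the top cohomology of $\CX_\gamma$ into the leading term of $I_\gamma^{st}(1_{\Lg_{x,\ge0}})$, and then proves the sharp estimate (\ref{maineq0}) $I_\gamma^{st}(1_{\Lg_{x,\ge0}}) = |W| + O(q^{-1/2})$ by combining DeBacker's Shalika germ expansion (Theorem \ref{De2}), which reduces the regular semisimple integral to a sum of nilpotent orbital integrals, with Ranga Rao's method (Section \ref{secRR}), which computes the regular nilpotent contribution exactly as $|W|$ via the Iwasawa decomposition $G = \bigsqcup_\alpha G_{y,\ge0} w_\alpha P$ indexed by $W_y\backslash W/W_P$. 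The factor $|W|$ in the paper thus arises from a finite, explicit decomposition inside the orbital integral of the regular nilpotent $e_1$, rather than from identifying and counting top-dimensional cells of $\CX_\gamma$ itself. One nice byproduct of the paper's route is that Galois-stability and the component count come out simultaneously from the weight filtration argument in (\ref{basicest})--(\ref{GKMe}), with no separate descent step.

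There are, however, genuine gaps in your outline as written. The most serious is Step 1: a stratum-by-stratum dimension formula for $\CX_\gamma^w = \CX_\gamma \cap (IwI/I)$ with an explicit identification of the top-dimensional set $\widetilde W^{top}$ is precisely the sort of result (close to the Lusztig--Smelt/Goresky--Kottwitz--MacPherson ``paving'' program) that is \emph{not} known in general, and is the very thing the paper's $p$-adic-analytic strategy is designed to avoid. You flag this as ``the main obstacle,'' but without it the proof does not get off the ground. Second, your description of the centralizer action on $\widetilde W^{top}$ via ``a finite-index sublattice of the translation subgroup $X_*(\bT) \subset \widetilde W$'' only makes literal sense when $\bT = Z_\bG(\gamma)$ is split over $F$, since the translation subgroup of $\widetilde W$ is $X_*(\bS)$, not $X_*(\bT)$; for non-split $\bT$ the free lattice acting on $\CX_\gamma$ is $\Lambda_\gamma$ as in Appendix \ref{appred}, and its interaction with the Iwahori--Bruhat stratification is considerably more subtle. (The paper explicitly notes that in the split-torus case one can recover the result by the methods of \cite[\S5]{KL88}, and that is really the regime where your outline is most plausible.) Third, Galois-stability of the $\CT$-orbits of components does not follow merely from each cell $\CX_\gamma^w$ being defined over $k$: Frobenius could still permute the components \emph{within} a stratum, or permute strata with the same $w$-label differently than it permutes the lattice orbits. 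Finally, passing from ``top-dimensional strata'' to ``irreducible components'' requires not only that no top stratum lie in the closure of another, but also that each top stratum contribute exactly one component (or a controlled number); your iterated-affine-bundle heuristic addresses irreducibility of a single cell but not the closure relations, which are again hard combinatorics in the affine flag variety.
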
\hp

While this is a geometric result, our proof relies on the method of $p$-adic orbital integrals. Based on that a variety over $k\cong\mathbb{F}_q$ with $C$ irreducible components of dimension $d$ over $k$ has ``roughly'' $Cq^d$ rational points, and that the point-count on the affine Spriner fiber over $\gamma$ is related to a (regular semisimple) orbital integral over the orbit $\Ad(G)\gamma$, we re-interprete the dimension formula and the question for the number of components as an estimate for orbital integrals. 
In fact, the formulation of such estimate turns out to be pretty natural and allows us to realize the dimension formula for affine Springer fibers proved by Bezrukavnikov as an affine generalization of that of finite Springer fibers.\p

In our formulation, Bezrukavnikov's dimension formula also has an analogue for nilpotent elements (Proposition \ref{comb}), which we prove by computing nilpotent orbital integrals using Ranga Rao's method \cite{Ra72}. Our results for nilpotent orbital integrals achieve sharper estimates than the regular semisimple case. DeBacker's homogeneity result \cite{De02a} allows us to compare our regular semisimple orbital integral with nilpotent ones to obtain a sharper estimate also for the former. The estimate is then strong enough to imply Theorem \ref{main}.\p

We explain the structure of this article. In Section \ref{secBT} we review DeBacker's results that we will use, preceded by a quick tour in Bruhat-Tits theory. Note that we use a different notation of Moy-Prasad filtration, etc, than the usual notation by Moy and Prasad. In Section \ref{secnorm}, we introduce various normalizations for orbital integrals, and at the end discuss what we mean by ``estimates'' of orbital integrals. In Section \ref{secgeom}, we introduce affine Springer fibers and several generalization as well as their basic properties. We also explain how estimates on their point-counting are related component-counting. In Section \ref{secdim}, we review the dimension formula for affine Springer fibers \cite{KL88}, \cite{Be96}, re-interpret it as estimates for regular semisimple orbital integrals, and state the analogous result for nilpotent orbital integrals. In Section \ref{secSha}, we show how the homogeneity result of DeBacker and certain estimates for orbital integrals, some to be proven in Section \ref{secRR}, can be used to obtain Theorem \ref{main}. In Section \ref{secRR}, Ranga Rao's method \cite{Ra72} is discussed in detail and applied to obtain the needed results for nilpotent orbital integrals. This finishes the proof of Theorem \ref{main}. Section \ref{secSt} provides additional descriptions of the components and reveals that the components behave similarly to that of the Steinberg variety. We also present there two further conjectures motivated by the descriptions. Finally, in Appendix \ref{secchar} the assumption on $\mathrm{char}(k)$ is explained, and in Appendix \ref{appred} we explain how to reduce the result over a general field $k$ to a finite field.

\subsection*{Acknowledgment} The author is grateful to Zhiwei Yun for many inspiring discussions. He also wishes to thank Stanford University for their hospitality during his visit. In addition, he is grateful to the referees who have greatly helped to clarify numerous points.\p

\section{A review on Bruhat-Tits theory and some applications of DeBacker}\label{secBT}

In this section, we state and review various notions and results in the direction of Bruhat-Tits theory \cite{BT72,BT84}. Here we will introduce a setup that works only for the equal characteristic case and also restrict to split groups, and refer readers interested in the general case to \cite{Tits}. Our use of Bruhat-Tits theory is mostly about DeBacker's two results \cite{De02a,De02b}; we will state in this section complete statements of what we need. Readers already familiar with the theory may skip the whole section once they note that the notation $G_{y,\ge r}$ (resp. $\Lg_{y,\ge r}$) will be used to denoted what Moy and Prasad wrote as $G_{y,r}$ (resp. $\Lg_{y,r}$) and instead $\Lg_{y,r}:=\Lg_{y,\ge r}/\Lg_{y,>r}$ will be used to denote what is sometimes written as $\Lg_{y,r:r+}$, etc. Likewise the reductive quotient will be denoted $\bG_{y,0}$ (as an algebraic group over $k$) and $G_{y,0}:=\bG_{y,0}(k)$.\p

To begin with, Bruhat-Tits theory asserts a connected contractible polyhedral complex\footnote{That is, a (usually infinite) collection of polyhedrons, where two $n$-dimensional polyhedrons are glued along their $(n-1)$-dimensional faces.}, the (extended) Bruhat-Tits building $\CB_{\bG}$, which has a list of wonderful properties. It is formed by gluing many ``apartments.'' These apartments are indexed by the set of maximal ($F$-)split tori of $\bG\times_{\Spec k}\Spec F$, where recall all such tori are conjugate by $G=\bG(F)$. Fix $\bS$ a maximal $k$-split torus of $\bG$. The base change $\bS\times\Spec F$ of $\bS$ to $F$ is a maximal $F$-split torus of $\bG\times\Spec F$, which corresponds to an apartment $\CA_{\bS}$. (Note also that the base change of $\bS$ to $\CO=k[[t]]$ gives a fixed integral form of $\bS\times\Spec F$.) In this article, the use of the Bruhat-Tits building can actually be restricted to $\CA_{\bS}$, which we now describe.\p

As a topological space we have $\CA_{\bS}:=X_*(\bS)\otimes\mathbb{R}$. Let $\bar{\Phi}\subset X^*(\bS)$ be the set of roots (of $\bG$ with respect to $\bS$). Every $\bar{\alpha}\in\bar{\Phi}$ gives an $\mathbb{R}$-valued function on $\CA_{\bS}$. An affine root $\alpha$ of $\bG$ with respect to $\bS$ is an object of the form $\alpha=\bar{\alpha}+n$ with $n\in\Z$. We identify an affine root as an $\mathbb{R}$-valued function $\CA_{\bS}$. Let $\Phi$ be the set of affine roots. A {\bf hyperplane} on $\CA_{\bS}$ is the zero locus of some $\alpha\in\Phi$. We cut $\CA_{\bS}$ into polyhedrons using these hyperplanes. We talk about interior of a polyhedron as the obvious combinatorial interior, so that $\CA_{\bS}$ is the disjoint union of the interior of each polyhedron.\p

Let us discuss the use of an apartment before describing the building itself: Each root $\bar{\alpha}$ gives an (not unique) isomorphism $\iota_{\bar{\alpha}}:\Ga\xra{\sim}\Lu_{\bar{\alpha}}\subset\Lie\bG$ from $\Ga$ to the root subspace $\Lu_{\bar{\alpha}}$ over $k$. For an affine root $\alpha=\bar{\alpha}+n\in\Phi$ we have the affine root subspace $\Lg_{\alpha}:=\iota_{\bar{\alpha}}(t^nk)\subset\Lg$. For any point $y\in\CA_{\bS}$, consider the (Moy-Prasad) grading $\dsp\Lg=\bigoplus_{r\in\R}\Lg_{y,r}$ given by
\begin{equation}\label{MPgrading}
\Lg_{y,r}:=\left\{\begin{array}{lll}\dsp\bigoplus_{\alpha\in\Phi,\alpha(y)=r}\Lg_{\alpha},&r\not\in\Z.\\\\
\dsp t^r(\Lie\bS)(k)\oplus\bigoplus_{\alpha\in\Phi,\alpha(y)=r}\Lg_{\alpha},&r\in\Z.
\end{array}\right.\end{equation}
We also write $\Lg_{y,\ge r}:=\bigoplus_{r'\ge r}\Lg_{y,r'}$ and $\Lg_{y,>r}:=\bigoplus_{r'>r}\Lg_{y,r'}$; they are $\CO$-submodules of $\Lg$ and are the so-called Moy-Prasad filtration \cite{MP94}. We have $[\Lg_{y,r_1},\Lg_{y,r_2}]\subset\Lg_{y,r_1+r_2}$, and in particular $\Lg_{y,0}$ is a Lie algebra over $k$. One may check that $(\Lie\bS)(k)\subset\Lg_{y,0}$ is a Cartan subalgebra, and the corresponding roots are those $\{\bar{\alpha}\in\bar{\Phi}\;|\;(\bar{\alpha}+n)(y)=0\text{ for some }\bar{\alpha}+n\in\Phi\}$; such $\bar{\alpha}+n$ are those affine roots whose affine root subspaces appear in $\Lg_{y,0}$. The previous set forms a root subsystem of that of $\bG$. Hence we may embed $\Lg_{y,0}$ into $(\Lie\bG)(k)$ as a reductive subalgebra of equal rank.\p

The group version of Moy-Prasad filtration is slightly more complicated. Each root $\bar{\alpha}$ gives an isomorphism $j_{\bar{\alpha}}:\Ga\xra{\sim}\bG_{\bar{\alpha}}\subset\bG$ over $k$, for an affine root $\alpha=\bar{\alpha}+n\in\Phi$ we have the affine root subgroup $G_{\alpha}:=j_{\bar{\alpha}}(t^nk)\subset\bG(F)=G$. Consider also $G_{\alpha+\Z_{\ge 0}}:=j_{\bar{\alpha}}(t^n\CO)$; this is the closure of the subgroup generated by $G_{\alpha+n}$, $n=0,1,2,...$. For any point $y\in\CA_{\bS}$, Bruhat and Tits associate to it a {\bf parahoric subgroup} which we denote by $G_{y,\ge 0}$, defined to be the subgroup of $G$ generated by $\bS(\CO)$ and $\{G_{\alpha+\Z_{\ge 0}}\;|\;\alpha\in\Phi,\;\alpha(y)\ge 0\}$.\p

For any $r\ge 0$, let $G_{y,\ge r}$ be the subgroup generated by $\ker(\bS(\CO)\ra\bS(\CO/t^{\lceil r\rceil}))$ and $\{G_{\alpha+\Z_{\ge 0}}\;|\;\alpha\in\Phi,\;\alpha(y)\ge r\}$. One checks that this defines, as in the Lie algebra case, a decreasing filtration that only jumps at certain discrete $r$. We may thus also put $G_{y,>r}:=G_{y,\ge r+\epsilon}$ for any sufficiently small $0<\epsilon\ll 1$. The quotient $G_{y,\ge0}/G_{y,>0}$, similar to $\Lg_{y,0}=\Lg_{y,\ge 0}/\Lg_{y,>0}$, is the $k$-point of a reductive group $\bG_{y,0}$ (over $k$) which may be realized as the reductive subgroup of $\bG$ of equal rank with the root subsystem $\{\bar{\alpha}\in\bar{\Phi}\;|\;(\bar{\alpha}+n)(y)=0\text{ for some }\bar{\alpha}+n\in\Phi\}$, so that $\Lg_{y,0}=(\Lie\bG_{y,0})(k)$. The group $\bG_{y,0}$ is called the reductive quotient at $y$. For any $r>0$, by matching root subalgebras and root subgroups, as well as the $(\Lie\bS)$-part with the $\bS$-part, we have a natural isomorphism \cite[3.8]{MP94}
\begin{equation}\label{MP}
\Lg_{y,r}\xra{\sim}G_{y,r}:=G_{y,\ge r}/G_{y,>r}.
\end{equation}
One importance of the reductive quotient is that any $\Lg_{y,\ge r}$ (resp. $G_{y,\ge r}$) is normalized by $G_{y,\ge 0}$, and the induced conjugation action of $G_{y,\ge 0}$ on $\Lg_{y,r}$ (resp. $G_{y,r}$) factors through $G_{y,0}$. In particular, the induced action on $\Lg_{y,0}$ (resp. $G_{y,0}$) is the conjugation action of $G_{y,0}=(\bG_{y,0})(k)$ on its Lie algebra (resp. on itself).\p

It follows immediate from definition that if $y$ and $y'$ are in the interior of the same polyhedron, then $G_{y,\ge 0}=G_{y',\ge 0}$, $G_{y,>0}=G_{y',>0}$, $\Lg_{y,\ge 0}=\Lg_{y',\ge 0}$, $\Lg_{y,>0}=\Lg_{y',>0}$ and there is a canonical identification between $\bG_{y,0}$ and $\bG_{y',0}$. A polyhedron of maximal dimension (that is, $\rank\bG$) is called an alcove. We will fix $x$ some point in some alcove on $\CA_{\bS}$. Any such associated parahoric subgroup $G_{x,\ge 0}$ is called an Iwahori subgroup, which we sometimes denote by $I$. We also have $\bG_{x,0}=\bS$. Another specific parahoric subgroup is the one associated to the origin $o\in X_*(\bS)\otimes\mathbb{R}\cong\CA_{\bS}$. One easily sees that $G_{o,\ge 0}=\bG(\CO)$ and $o$ is called a hyperspecial vertex and $G_{o,\ge 0}$ a hyperspecial parahoric subgroup. One also has $\bG_{o,0}=\bG$. Nevertheless, if one chooses a set of positive roots among $\bar{\Phi}$ and let $\bar{\rho}\in X_*(\bS)$ be any cocharacter in the interior of the positive chamber, then for $0<\epsilon\ll 1$ we have $-\epsilon\bar{\rho}$ lies in an alcove. One may suppose $x=-\epsilon\bar{\rho}$ in which case the Iwahori subgroup $G_{x,\ge 0}$ is the preimage under $\bG_{o,\ge0}=\bG(\CO)\sra\bG(k)$ of the Borel subgroup in $\bG(k)$ with respect to the positive roots.\p

More generally, when we deal with any $y\in\CA_{\bS}$ we may usually (thanks to Fact \ref{FactBT}(iv) below) arrange so that $y$ lies in the closure of the alcove containing $x$, just like $o$ does in the previous paragraph. When this is the case, one checks that $G_{x,\ge 0}$ (resp. $\Lg_{x,\ge 0}$) is the preimage of a Borel subgroup (resp. subalgebra) of $G_{y,0}$ (resp. $\Lg_{y,0}$) under $G_{y,\ge0}\sra G_{y,0}$ (resp. $\Lg_{y,\ge 0}\sra\Lg_{y,0}$). In this manner $\bG_{x,0}$ is the abelianization of that Borel subgroup of $\bG_{y,0}$ (and thus a maximal torus).\p

For any $\gamma\in\Lg$, there is the notion of depth given by
\begin{equation}\label{depth}
\depth(\gamma):=\min\{r\in\R\;|\;\Ad(G)\gamma\cap\Lg_{y,\ge r}\not=\emptyset\text{ for some }y\in\CA_{\bS}\}.
\end{equation}
The minimum always exists as a rational number unless $\gamma$ is nilpotent in which case $\depth(\gamma)=+\infty$. We list some basic results about this notion:
\begin{lemma}\label{TN} Under our assumption on $\mathrm{char}(k)$, for any $\gamma\in\Lg$ we have\hp
(i)$\;\;$ The depth of $\gamma$ is equal to that of its semisimple part (in its Jordan decomposition).\qp
(ii)$\;$ $\depth(t\gamma)=\depth(\gamma)+1$.\qp
(iii) The following are equivalent:\qp
\renewcommand{\theenumi}{\alph{enumi}}
\begin{enumerate}
\item The depth $\depth(\gamma)>0$, i.e. for some $g\in G$  we have $\Ad(g)\gamma\in\Lg_{y,>0}$ for some $y\in\CA_{\bS}$.
\item For some $g\in G$ we have $\Ad(g)\gamma\in\Lg_{y,\ge 0}$ for some $y\in\CA_{\bS}$, and the image of $\Ad(g)\gamma$ in $\Lg_{y,0}$ is nilpotent for any such $g$ and $y$.
\item For some $g\in G$ we have $\Ad(g)\gamma\in\Lg_{o,\ge0}=(\Lie\bG)(\CO)$, and the image of $\Ad(g)\gamma$ in $\Lg_{o,0}=(\Lie\bG)(k)$ is nilpotent.
\item The powers $\ad(\gamma)^n\in\operatorname{End}_F(\Lg)$ converge to zero as $n\ra+\infty$.
\end{enumerate}
\noindent In particular, any $\gamma\in\Lg$ of depth $>0$ is called {\bf topologically nilpotent}.
\end{lemma}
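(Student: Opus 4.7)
The plan is to handle (ii) by an elementary computation, establish (iii) as a circular sequence of implications anchored by the spectral condition (d), and finally derive (i) from the shared-eigenvalue structure of the Jordan decomposition. Part (ii) reduces to $t\Lg_{y,\ge r}=\Lg_{y,\ge r+1}$, immediate from (\ref{MPgrading}): the torus summand $t^r(\Lie\bS)(k)$ multiplies to $t^{r+1}(\Lie\bS)(k)$, and each affine root piece $\Lg_\alpha=\iota_{\bar\alpha}(t^nk)$ maps to $\Lg_{\alpha+1}=\iota_{\bar\alpha}(t^{n+1}k)$, with $(\alpha+1)(y)=\alpha(y)+1$. Since $t$ is central, $\Ad(g)(t\gamma)=t\Ad(g)\gamma$, so $t\gamma\in\Ad(G)\Lg_{y,\ge r+1}$ iff $\gamma\in\Ad(G)\Lg_{y,\ge r}$, giving (ii).

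For (iii), I would prove the cycle (c)$\Rightarrow$(a)$\Rightarrow$(d)$\Rightarrow$(c), then (a)$\Leftrightarrow$(b). For (c)$\Rightarrow$(a): a nilpotent element of $(\Lie\bG)(k)$ is $\bG(k)$-conjugate into the nilpotent radical of some Borel (valid in our characteristic), and lifting this conjugator through $\bG(\CO)\sra\bG(k)$ moves $\Ad(g)\gamma$ into $\Lg_{x,>0}$ for $x$ in an alcove adapted to that Borel. For (a)$\Rightarrow$(d): the graded bracket $[\Lg_{y,\ge r},\Lg_{y,\ge s}]\subset\Lg_{y,\ge r+s}$ implies that if $\Ad(g)\gamma\in\Lg_{y,\ge r}$ with $r>0$, then $\ad(\Ad(g)\gamma)^n$ pushes any element into arbitrarily deep filtrations, converging to $0$ in the $t$-adic topology; by $\Ad$-equivariance of $\ad$, also $\ad(\gamma)^n\to 0$. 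For (d)$\Rightarrow$(c): writing $\gamma=\gamma_s+\gamma_n$ (Jordan), the operators $\ad(\gamma)$ and $\ad(\gamma_s)$ share $\bar F$-eigenvalues because $\ad(\gamma_n)$ is nilpotent and commutes with $\ad(\gamma_s)$; hence (d) forces $\alpha(\gamma_s)$ to be topologically nilpotent for every root $\alpha$ of $\bT:=Z_\bG(\gamma_s)^\circ$. Standard Bruhat--Tits arguments then place $\gamma_s$ in $\Lg_{o,>0}$ after $G$-conjugation, while $\gamma_n$, which lies in the centralizing Levi $\Lie Z_\bG(\gamma_s)(F)$ and is nilpotent there, is further conjugated into the nilpotent radical of a Borel of that Levi, landing in $\Lg_{o,>0}$ as well. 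For (a)$\Leftrightarrow$(b): (b)$\Rightarrow$(a) conjugates the nilpotent image via $G_{y,\ge 0}\sra\bG_{y,0}(k)$ into the nilpotent radical of a Borel of $\bG_{y,0}$, which lifts to $\Lg_{x,>0}$ for appropriate $x$; (a)$\Rightarrow$(b) has its existence part from (c), and universality is the observation that $\ad$-eigenvalues on any $\Lg_{y,0}$ are reductions modulo the maximal ideal of the eigenvalues of $\ad(\gamma)$, hence zero in $k$, forcing the image to be nilpotent.

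For (i), the spectral identity used for (d)$\Rightarrow$(c) already shows $\gamma$ is topologically nilpotent iff $\gamma_s$ is. Combining with (ii) lets us shift both depths by any integer $N$ (by passing to $t^{-N}\gamma$), and iterating the topologically-nilpotent equivalence on these shifts pins down $\depth(\gamma)=\depth(\gamma_s)$; the converse inequality uses that the commuting nilpotent $\gamma_n$, living in the reductive centralizer of $\gamma_s$, can always be conjugated within that centralizer into an arbitrarily deep filtration compatible with the position of $\gamma_s$. The main obstacle I anticipate is precisely (i) when $\gamma_s$ lies in a non-split $F$-torus: bringing $\gamma_s$ into $\Lg_{o,>0}$ and then adjusting $\gamma_n$ compatibly requires that the Bruhat--Tits building of the centralizing Levi embeds nicely in $\CB_{\bG}$. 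The characteristic hypothesis enters here to guarantee reductive centralizers and a Jacobson--Morozov-type theorem for the relevant nilpotent lifts.
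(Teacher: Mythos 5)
The paper does not actually supply a proof of Lemma \ref{TN}; it is stated as standard background, so I can only assess your argument on its own merits.

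Parts (ii) and the cycle of implications in (iii) are sound in outline: $t\Lg_{y,\ge r}=\Lg_{y,\ge r+1}$ gives (ii) directly, and (c)$\Rightarrow$(a) via lifting a Borel, (a)$\Rightarrow$(d) via the graded bracket property, and (a)$\Leftrightarrow$(b) via reduction of eigenvalues are all correct ideas. (I will flag, though not hold against you, that the implication (d)$\Rightarrow$(c) as well as the universality half of (b) silently assume that $\ad$-nilpotence of the reduction forces nilpotence of the reduction; this fails when $\bG$ has positive-dimensional center --- e.g.\ $\bG=\mathbb{G}_m$, $\gamma=1$ satisfies (d) but not (a), (b), (c). This is an issue with the lemma as stated for reductive, non-semisimple $\bG$, and in the paper it is harmless because the lemma is only applied to regular semisimple elements after reducing to $\bG_{ad}$; your proof inherits exactly the same blind spot.)

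The genuine gap is in (i). You argue that the equivalence ``$\gamma$ topologically nilpotent $\Leftrightarrow$ $\gamma_s$ topologically nilpotent,'' combined with the integer shift from (ii), ``pins down'' $\depth(\gamma)=\depth(\gamma_s)$. It does not: the shift $\gamma\mapsto t^{-N}\gamma$ only moves depths by integers $N$, so testing topological nilpotence of the shifts gives you, for each integer $N$, the equivalence $\depth(\gamma)>N\Leftrightarrow\depth(\gamma_s)>N$. Since depths are rational and typically non-integral, this only identifies the integer parts $\lceil\depth(\gamma)\rceil=\lceil\depth(\gamma_s)\rceil$ and cannot distinguish, say, $\depth(\gamma)=1/3$ from $\depth(\gamma_s)=1/2$. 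To make this route work you would need to shift by fractional powers of a uniformizer, i.e.\ pass to totally ramified extensions $E/F$ of degree $e$ (along which depth scales by $e$, and topological nilpotence descends), thereby testing all rational thresholds $N/e$. Alternatively, and more directly, one usually proves (i) by showing that the Moy--Prasad pieces $\Lg_{y,\ge r}$ are stable under taking Jordan parts (so $\gamma\in\Lg_{y,\ge r}$ forces $\gamma_s\in\Lg_{y,\ge r}$, giving $\depth(\gamma_s)\ge\depth(\gamma)$), and by exploiting the embedding of the building of $\bM:=Z_\bG(\gamma_s)$ into $\CB_\bG$ to conjugate $\gamma_n$ arbitrarily deep inside $\bM$ while fixing $\gamma_s$ (giving $\depth(\gamma)\ge\depth(\gamma_s)$) --- you correctly identify this second step but treat it as a secondary ``converse inequality'' rather than as one of the two pillars the proof actually rests on. As written, part (i) does not close.
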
\hp

Now we describe the two results of DeBacker, both of which are generalizations of results of Waldspurger. The first result \cite{De02b} parametrizes $G$-orbits of nilpotent elements in $\Lg$ using nilpotent orbits in $\Lg_{y,0}$; that is, it reduces the parametrization of nilpotent orbits from a non-archimedean local field to its residue field via Bruhat-Tits theory.

\begin{theorem}\label{De1}\cite[Theorem 5.6.1]{De02b} Assume $\mathrm{char}(k)\gg0$. Let $e\in\Lg$ be nilpotent. Then there exists $y\in\CA_{\bS}$ and $\bar{e}\in\Lg_{y,0}$ nilpotent such that\hp
(i)$\;\;$ The $G$-conjugacy class of $e$ meets $\bar{e}+\Lg_{y,>0}$.\qp
(ii)$\;$ For any nilpotent element $e'$ in $\bar{e}+\Lg_{y,>0}$, either $e'$ is conjugate to $e$, or $\dim\Ad(G)(e')>\dim\Ad(G)(e)$.\qp
\noindent We may always choose $y$ to lie inside the closure of the alcove containing $x$.
\end{theorem}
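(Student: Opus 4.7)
The plan is to attach to $e$ a Jacobson-Morozov $\mathfrak{sl}_2$-triple and construct $y$ from the associated cocharacter. Under the hypothesis on $\mathrm{char}(k)$, Pommerening's theorem yields $h,f\in\Lg$ with $[h,e]=2e$, $[h,f]=-2f$, $[e,f]=h$; the semisimple $h$ integrates to a cocharacter $\lambda$ of $\bG$ defined over $F$. After $G$-conjugation I may assume $\lambda$ factors through $\bS\times_{\Spec k}\Spec F$. The desired $y\in X_*(\bS)\otimes\Q\subset\CA_{\bS}$ is then taken proportional to $\lambda$ (after a suitable rescaling), chosen so that $\bG_{y,0}$ coincides with the Levi $Z_{\bG}(\lambda)$; by transitivity of the affine Weyl action, $y$ may further be assumed to lie in the closure of the alcove containing $x$.

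For part (i), one conjugates $e$ into $\Lg_{y,\ge 0}$. With the scaling above, the $\lambda$-weight grading on $\Lg$ aligns with the Moy-Prasad grading at $y$: after a suitable $G$-conjugation encoding the passage from an $\mathfrak{sl}_2$-triple over $F$ to an adapted position over $\CO$, the element $e$ lies in $\Lg_{y,\ge 0}$ and its image $\bar e\in\Lg_{y,0}$ is a distinguished nilpotent of $\bG_{y,0}=Z_{\bG}(\lambda)$ in the Bala-Carter sense (inside this Levi, the $\mathfrak{sl}_2$-triple $(e,h,f)$ has trivial centralizer torus). This gives (i).

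For part (ii), the content is a rigidity-versus-degeneration dichotomy: for any nilpotent $e'\in\bar e+\Lg_{y,>0}$, either $e'\in\Ad(G)e$, or $\dim\Ad(G)e'>\dim\Ad(G)e$. I would deduce this from Kempf-Rousseau instability applied to the $G$-action on $\Lg$: the cocharacter $\lambda$ is optimal (of maximal normalized slope) for $e$, and it continues to destabilize any $e'$ in the same Moy-Prasad coset. If $\lambda$ remains optimal for $e'$, rigidity within $Z_{\bG}(\lambda)$ combined with Bala-Carter in $\bG_{y,0}$ forces $e'\in\Ad(G_{y,\ge 0})e$; if $\lambda$ is strictly sub-optimal for $e'$, the Hilbert-Mumford inequality yields $\dim Z_G(e')<\dim Z_G(e)$, and hence $\dim\Ad(G)e'>\dim\Ad(G)e$.

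The main obstacle is precisely this last dichotomy. Bala-Carter and $\mathfrak{sl}_2$-theory over $k$ organize the combinatorial side of the problem, but transferring the information to $G$-orbits through the Moy-Prasad filtration on $\Lg$ requires a genuinely $p$-adic Hilbert-Mumford analysis; the delicate point is excluding perturbations $e'\in\bar e+\Lg_{y,>0}$ that land in a \emph{different} nilpotent $G$-orbit of the \emph{same} dimension as $\Ad(G)e$, which would violate (ii).
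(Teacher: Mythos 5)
This statement is not proved in the paper --- it is quoted verbatim from DeBacker \cite[Theorem 5.6.1]{De02b} and used as a black box, so there is no ``paper's own proof'' to compare against. What I can do is assess the proposal on its own terms, and there are two genuine problems.

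First, the construction of $y$ is internally inconsistent. You take $y$ proportional to $\lambda$ and choose the scaling so that $\bG_{y,0}=Z_{\bG}(\lambda)$. But then $\Lg_{y,0}$ consists of affine root spaces $\Lg_{\bar\alpha+n}$ with $\langle\bar\alpha,\lambda\rangle=0$ (together with the torus part), i.e. $\Lg_{y,0}=\Lie Z_{\bG}(\lambda)\otimes k$ sits entirely in the $\lambda$-weight-$0$ piece. The element $e$ lives in $\lambda$-weight $2$, so for $y=c\lambda$ with $c>0$ one finds $e\in\Lg_{y,>0}$, and hence the image of $e$ in $\Lg_{y,0}$ is \emph{zero}, not a distinguished nilpotent. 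The assertion ``$\bar e$ is a distinguished nilpotent of $\bG_{y,0}=Z_{\bG}(\lambda)$'' cannot hold simultaneously with $\bar e$ being the reduction of $e$ at $y$. In DeBacker's actual construction the base point $y$ is \emph{not} taken proportional to $\lambda$: one first conjugates the full $\mathfrak{sl}_2$-triple into ``adapted position'' at a suitable vertex $x_0$ (so that $e,h,f$ reduce to an $\mathfrak{sl}_2$-triple in $\Lg_{x_0,0}$) and then perturbs $x_0$ in the $\lambda$-direction by a small generic amount; the resulting reductive quotient $\bG_{y,0}$ is a Levi of $\bG_{x_0,0}$, not of $\bG$, and the reduction $\bar e$ is the weight-$2$ part taken inside that quotient. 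The distinction matters: it is exactly what lets $\bar e$ land in $\Lg_{y,0}$ rather than in $\Lg_{y,>0}$.

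Second, as you yourself flag, part (ii) --- the minimality of the orbit of $e$ among nilpotent orbits meeting $\bar e+\Lg_{y,>0}$, with a strict dimension increase otherwise --- is the hard content, and the Kempf--Rousseau/Hilbert--Mumford sketch does not close it. In particular the inequality $\dim Z_G(e')<\dim Z_G(e)$ for ``sub-optimal'' $e'$ is not automatic, and the possibility of a perturbation $e'$ lying in a \emph{different} orbit of the \emph{same} dimension is precisely what must be excluded; an optimality argument that only controls the Hilbert--Mumford slope does not by itself give this. DeBacker's proof handles this via a building-theoretic refinement (generalized $r$-facets and degenerate pairs, together with an associativity/uniqueness argument across facets) rather than a direct Kempf--Rousseau estimate, and that machinery is not reconstructed here. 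As written, the proposal identifies the right ingredients ($\mathfrak{sl}_2$-theory, Moy--Prasad filtration, a building-theoretic Bala--Carter) but neither part (i) nor part (ii) is established.
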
\hp

DeBacker's result actually says $(y,\bar{e})$ can be made unique in an appropriate sense, giving a parametrization of nilpotent orbits in $\Lg$; we will not need this stronger formulation. The second result, which was built on the first, is an enhancement of the so-called Shalika germ expansion. For $\gamma\in\Lg$ let $I_{\gamma}\in C_c^{\infty}(\Lg)^*$ be the orbital integral over $\Ad(G)\gamma$ (see the first two paragraphs in Section \ref{secnorm} for a discussion and our normalization).

\begin{theorem}\label{De2}\cite[Theorem 2.1.5(3)]{De02a} Let $\{e_1,e_2,...\}$ be the set of nilpotent orbits in $\Lg$.
Fix $r\in\R$. For any $\gamma$ with $\depth(\gamma)>r$, there exists constants $\Gamma_{e_j}(\gamma)\in\Q$, one of each nilpotent orbits, such that
for any $f\in C_c^{\infty}(\Lg)$ locally constant by $\Lg_{y,>r}$ for some $y\in\CA_{\bS}$, i.e. $f(\gamma_1+\gamma_2)=f(\gamma_1)$ for any $\gamma_1\in\Lg$, $\gamma_2\in\Lg_{y,>r}$, we have
\[I_{\gamma}(f)=\sum_j\Gamma_{e_j}(\gamma)I_{e_j}(f).\]
\end{theorem}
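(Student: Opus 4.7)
The plan is to combine Harish-Chandra's classical Shalika germ expansion with a homogeneity argument measured against the Moy--Prasad filtration, using DeBacker's parametrization of nilpotent orbits (Theorem \ref{De1}) as the combinatorial backbone. The classical input provides, for each $\gamma$, constants $\Gamma_{e_j}(\gamma)\in\Q$ and some unspecified small-scale neighborhood on which $I_\gamma(f)=\sum_j\Gamma_{e_j}(\gamma)I_{e_j}(f)$ holds; the content of the theorem is to pin down the neighborhood explicitly as the space of $\Lg_{y,>r}$-invariant test functions whenever $r<\depth(\gamma)$.

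The first main step is a homogeneity/scaling argument: nilpotent orbital integrals, thanks to the $\mathbb{G}_m$-action on each orbit coming from $\mathfrak{sl}_2$-triples (available under the hypothesis on $\mathrm{char}(k)$), satisfy an explicit scaling law under $f\mapsto f(t\cdot)$, picking up a factor of $|t|^{-d_j}$ where $d_j$ depends on the orbit. By Lemma \ref{TN}(ii), the dilation $\gamma\mapsto t\gamma$ shifts depth by $\mathrm{val}(t)$, while the $\Lg_{y,>r}$-invariance condition on test functions translates under dilation into an analogous invariance at a different scale. Combining these reduces the assertion to the case where $\depth(\gamma)$ is arbitrarily large --- equivalently, where $f$ is supported in a tiny neighborhood of $0$ --- a regime already covered by the classical germ expansion. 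Along the way, Theorem \ref{De1} ensures that the span of nilpotent orbital integrals appearing on the right-hand side, restricted to $\Lg_{y,>r}$-invariant functions, is finite-dimensional, since every nilpotent orbit is detected by a pair $(y',\bar{e})$ with $\bar{e}\in\Lg_{y',0}$ nilpotent, and only finitely many such pairs contribute at a fixed scale.

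To reduce to topologically nilpotent $\gamma$, one uses Jordan decomposition $\gamma=\gamma_s+\gamma_n$ and Harish-Chandra descent to $M:=Z_{\bG}(\gamma_s)(F)$: on $\Lie M$, $\gamma_n$ is topologically nilpotent, and an inductive hypothesis on groups of smaller dimension combined with Harish-Chandra's descent formula for orbital integrals transports the germ expansion from $M$ back to $\Lg$. The main obstacle will be the quantitative compatibility of Bruhat--Tits structures on $G$ and $M$: one must show that the natural embedding of the building of $M$ into $\CB_{\bG}$ matches Moy--Prasad lattices so that the depth hypothesis transfers correctly, and that the Lusztig--Spaltenstein correspondence between nilpotent orbits on $M$ and on $G$ combines the germ coefficients as predicted. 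This compatibility, rather than the homogeneity per se, is the technical heart of the argument, and is precisely where Theorem \ref{De1} and the discrete geometry of the apartment $\CA_{\bS}$ carry the real weight.
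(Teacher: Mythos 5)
This statement is not proved in the paper at all: Theorem~\ref{De2} is quoted verbatim from DeBacker's paper \cite[Theorem~2.1.5(3)]{De02a}, and the article uses it as a black box. There is therefore nothing in the paper to compare your proposal against; what you have written is a reconstruction of DeBacker's external argument, not a proof whose correctness the paper vouches for.

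As a sketch of DeBacker's actual proof, the broad outline you give is recognizable --- the interplay of the classical Harish-Chandra germ expansion, a Waldspurger-style homogeneity argument built around the Moy--Prasad filtration, and the nilpotent orbit parametrization of Theorem~\ref{De1} --- but there is a gap in the way you invoke Harish-Chandra descent. After descending to $M := Z_{\bG}(\gamma_s)$, the element $\gamma_n$ is genuinely nilpotent in $\Lie M$, so it does not have finite depth; an inductive hypothesis of the same form (germ expansion for elements of depth $>r$) cannot be applied to $\gamma_n$ directly. The descent in this circle of ideas is used to reduce the study of the \emph{distribution space} to the semisimple rank zero or topologically nilpotent case, and the heavy lifting is done by Howe's finiteness conjecture (Clozel, Barbasch--Moy, Waldspurger) and a careful analysis of which invariant distributions supported near zero are annihilated by the relevant ideal of test functions. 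Your proposal replaces this with a bare scaling argument ``reducing to arbitrarily large depth,'' which does not by itself show that the germ expansion stabilizes exactly at the Moy--Prasad threshold $\Lg_{y,>r}$; identifying that threshold precisely is the substance of DeBacker's theorem, and it requires the spanning statement for nilpotent orbital integrals (Theorem~2.1.5(1)--(2) in \cite{De02a}), which your sketch does not supply.
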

\noindent Note that the constants $\Gamma_{e_j}(\gamma)$ are intrinsic in $\gamma$, so that as a distribution $I_{\gamma}$ is somewhat comparable to $\sum\Gamma_{e_j}(\gamma)I_{e_j}$.\p

We now describe the so-called Bruhat-Tits building $\CB_{\bG}$, which we will not use (except through DeBacker's results) in this article. Bruhat-Tits theory asserts that $\CB_{\bG}$ exists as a polyhedral complex on which $G$ acts, and satisfies
\begin{fact}\label{FactBT}
(i)$\;\;$ $\CB_{\bG}$ is the union of $g.\CA_{\bS}$ for all $g\in G$; each $g.\CA_{\bS}$ is called an apartment.\qp
(ii)$\;$ The parahoric $G_{y,\ge 0}$ acts trivially on $y\in\CA_{\bS}$.\qp
(iii) For any $y'\in\CB_{\bG}$ with $y'=g.y$, $y\in\CA_{\bS}$, the group $G_{y',\ge 0}:=\Ad(g)G_{y,\ge 0}$ depends only on $y'$. We call $G_{y',\ge 0}$ the parahoric subgroup at $y'$. Similarly the Moy-Prasad filtration $G_{y',>0}:=\Ad(g)G_{y,>0}$ and $\Lg_{y',\ge r}:=\Ad(g)G_{y,\ge r}$, etc, and also the reductive quotient $\bG_{y',0}$ are well-defined.\qp
(iv) $G$ acts transitively on the set of alcoves. Hence all Iwahori subgroups are conjugate under $G$.\qp
(v)$\,$ Any compact subgroup of $G$ has a fixed point on $\CB_{\bG}$.
\end{fact}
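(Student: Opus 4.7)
The plan is to treat these as standard consequences of the Bruhat--Tits construction of $\CB_{\bG}$ as a polyhedral complex obtained by gluing copies of $\CA_{\bS}$ along the $G$-action dictated by the affine Tits system (i.e.\ the BN-pair) attached to $G = \bG(F)$. The ambient construction realizes $\CB_{\bG}$ as the quotient of $G \times \CA_{\bS}$ by the relation $(g,y) \sim (g',y')$ whenever $g^{-1}g' \in N_G(\bS)(F)$ and $g^{-1}g'$ acts on $\CA_{\bS}$ sending $y'$ to $y$ via the standard affine Weyl group action on $X_*(\bS) \otimes \R$. Once this is in hand, (i)--(iv) are essentially bookkeeping, and (v) requires one genuine geometric input.

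For (i), since $\bG$ is split and we have fixed a maximal $k$-split torus, the apartments of $\CB_{\bG}$ are indexed by maximal $F$-split tori of $\bG \times_{\Spec k} \Spec F$, and all such tori are $G$-conjugate; the translation $g \cdot \CA_{\bS} = \CA_{g\bS g^{-1}}$ and the fact that every point of $\CB_{\bG}$ lies in at least one apartment (immediate from the quotient construction) give the covering. For (ii), one observes that $G_{y,\ge 0}$ is generated by $\bS(\CO)$, which acts trivially on all of $\CA_{\bS}$, together with the affine root subgroups $G_{\alpha + \Z_{\ge 0}}$ for $\alpha(y) \ge 0$; each of these fixes $y$ pointwise, a statement one checks after reducing along each affine root direction to the rank-one $SL_2$ case, where the fixed-point calculation is explicit. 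For (iii), one must verify that whenever $h \in G$ sends $y \in \CA_{\bS}$ to $y' \in \CA_{\bS}$, we have $\Ad(h) G_{y,\ge 0} = G_{y',\ge 0}$; for $h \in N_G(\bS)(F)$ this is immediate from how affine Weyl elements permute affine roots, and the general case reduces to this using (ii) applied to write $h$ as a product of a torus-normalizing element and a parahoric element.

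For (iv), within a single apartment the affine Weyl group acts simply transitively on alcoves (a standard Coxeter-complex statement for the affine root system $\Phi$), and then (i) reduces the transitivity of $G$ on alcoves across different apartments to this case; conjugacy of Iwahori subgroups is then immediate from (ii) combined with their definition as stabilizers-up-to-pro-unipotent-radical of alcoves.

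I expect (v) to be the only step of genuine substance. For this I would invoke the Bruhat--Tits fixed point theorem: one equips $\CB_{\bG}$ with the canonical $G$-invariant metric whose restriction to $\CA_{\bS}$ is the Euclidean metric on $X_*(\bS) \otimes \R$, and checks (the hard part) that the gluing makes this a complete $\mathrm{CAT}(0)$ space. Given this, any isometric action of a compact subgroup $K \subset G$ has a bounded orbit, and the circumcenter of such an orbit in a $\mathrm{CAT}(0)$ space exists, is unique, and is therefore fixed by $K$. Verifying the $\mathrm{CAT}(0)$ property amounts to checking the negative-curvature (Bruhat--Tits) inequality along geodesics crossing the walls between apartments, which is the principal geometric content one must import from \cite{BT72}.
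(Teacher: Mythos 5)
The paper offers no proof of this statement: it is labeled a Fact and imported wholesale from Bruhat--Tits theory \cite{BT72,BT84}, so the real question is whether your sketch would stand on its own. As written it does not, because the gluing relation you specify is wrong in a way that matters. Identifying $(g,y)\sim(g',y')$ only when $g^{-1}g'\in N:=N_G(\bS(F))$ carries $y'$ to $y$ produces $G\times_N\CA_{\bS}$, a disjoint family of copies of $\CA_{\bS}$ indexed by $G/N$: two apartments $g.\CA_{\bS}$ and $g'.\CA_{\bS}$ would then meet only when $g^{-1}g'\in N$, so none of the actual gluing of the building is present. The correct relation (Bruhat--Tits, 7.4) is the one generated by your $N$-identifications \emph{together with} $(g,y)\sim(gu,y)$ for every $u$ in the group $U_y$ generated by the affine root subgroups $G_{\alpha+\Z_{\ge 0}}$ with $\alpha(y)\ge 0$. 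This omission is not cosmetic: in your model (ii) is false, and the rank-one $SL_2$ ``fixed-point calculation'' you invoke has nothing to compute --- that $G_{\alpha+\Z_{\ge0}}$ fixes $\{y:\alpha(y)\ge0\}$ is precisely the identification you left out of the relation, not a fact one can verify afterwards; the rank-one computations in Bruhat--Tits instead go into checking that the corrected relation is an equivalence relation compatible with the $N$-action (the axioms of a valued root datum). Likewise, since your quotient is a disjoint union of Euclidean apartments, the $\mathrm{CAT}(0)$ gluing inequality and the circumcenter argument for (v) have nothing to bite on; for instance $\bG(\CO)$ visibly has no fixed point there.

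With the corrected relation your outline is the standard one and does go through, with two further points to make honest. For (iii), the full stabilizer of $y$ in $G$ is in general strictly larger than $G_{y,\ge0}$ (for an alcove it contains the length-zero part of the Iwahori--Weyl group), so your reduction needs the decomposition $\mathrm{Stab}_G(y)=\bigl(N\cap\mathrm{Stab}_G(y)\bigr)\cdot G_{y,\ge0}$, which comes from the Bruhat decomposition of $G$ with respect to $G_{y,\ge0}$ and the Iwahori--Weyl group; the same argument is what shows the filtrations $G_{y,\ge r}$, $\Lg_{y,\ge r}$ and $\bG_{y,0}$ are well defined. For (iv), covering by apartments alone does not reduce transitivity on alcoves to the simple transitivity of the affine Weyl group inside $\CA_{\bS}$: you need either the Iwahori--Bruhat decomposition $G=\bigsqcup_w IwI$ or the building axiom that any two alcoves lie in a common apartment. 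For (v) the completeness-plus-$\mathrm{CAT}(0)$ circumcenter argument is indeed the standard route; note only that for the extended building of a reductive, non-semisimple $\bG$ one handles the factor $X_*(Z(\bG)^o)\otimes\R$ separately, where a compact subgroup acts through a bounded group of translations and hence trivially.
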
\hp

In particular, from (iii) above we see that we may define the depth (see \ref{depth}) as $\depth(\gamma):=\min\{r\in\R\;|\;\Ad(G)\gamma\in\Lg_{y,\ge r}\text{ for some }y\in\CB_{\bG}\}$. One may similarly replace $\CA_{\bS}$ by $\CB_{\bG}$ in Theorem \ref{De1} and \ref{De2} and relax the $G$-conjugacy. In this sense $\CB_{\bG}$ is more intrinsic, and moreover $\CB_{\bG}$ can be defined to have similar properties when $\bG$ is not split over $k$ but even only defined over $F$; we will nevertheless be content with the easier object $\CA_{\bS}$ in this article.\p

\begin{remark}\label{unram} When we make a finite unramified base change $F'/F$, or equivalently a finite base change $k'/k$ with $F'=k'((t))$, the base change $\bS':=\bS\times\Spec k'$ of $\bS'$ to $k'$ is still a maximal $k'$-split torus of $\bG\times\Spec k'$. One may identify $\CA_{\bS'}\cong X_*(\bS)\otimes\R\cong\CA_{\bS}$. In this sense
we identify the apartments corresponding to $\bS$ for all finite unramified extensions $F'/F$. (It's also possible embed the building for $F$ into that for $F'$. That will be the common practice, but we won't need it.)
\end{remark}

\section{Orbital integrals and normalizations}\label{secnorm}

In this section we discuss orbital integrals and their different normalizations. We will write $v:F^{\times}\ra\Z$ the normalized valuation so that $v(t)=1$. We recall for any smooth variety $X$ over $F$, the set $X(F)$ has a natural structure of an
$F$-analytic manifold, i.e. a topological space equipped with coordinate charts from $\CO^n$ such that coordinate change functions are analytic. It thus makes sense to talk about the tangent space at any point on $X(F)$. For example, $T_e(G)=\Lg$. We note that if $U\subset\Lg$ is a neighborhood of $0$ and $j:U\ra G$ is a coordinate chart with $j(0)=e$ (the identity in $G$) and $dj=\Id|_{\Lg}$, then
\begin{equation}\label{tangent}
j(\Lg_{y,\ge r})=G_{y,\ge r}
\end{equation}
for $r\gg 0$ (depending on $j$). This can be verified from generators of $\Lg_{y,\ge r}$ (resp. $G_{y,\ge r}$), which are either root subalgebra (resp. subgroup), or are in $\Lie\bS$ (resp. $\bS$) for which the statement reduces to $\bG=\mathbb{G}_m$, $\Lg=F$ and $G=F^{\times}$.\p

For any $\gamma\in\Lg$, its orbit $\Ad(G)\gamma$ is open in $(\Ad(\bG)\gamma)(F)$ and is also an $F$-analytic manifold. 
Up to constant there is a unique $G$-invariant measure on $\Ad(G)\gamma$. Let $C_c^{\infty}(\Lg)$ be the space of $\C$-valued functions on $\Lg$ that are compactly supported and locally constant. We denote by $I_{\gamma}(f)$ for the integral of $f$ over $\CO_G(\gamma)$; it is known to always converge \cite{Ra72}, \cite[Theorem 61]{Mc04}. Nevertheless $I_{\gamma}$ depends on the normalization of the measure on $\Ad(G)\gamma$. We now give this normalization\p

By assumption on $\mathrm{char}(k)$ there exists and we fix a $\bG$-invariant bilinear form on $\Lie\bG$. Over $F$ this gives $B(\cdot,\cdot):\Lg\times\Lg\ra F$. From (\ref{MPgrading}) we have
\begin{equation}\label{dual}
\Lg_{y,>0}=\{Y\in\Lg\,|\,B(X,Y)\in tk[[t]],\;\forall X\in\Lg_{y,\ge 0}\}
\end{equation}
for any $y\in\CA_{\bS}$. For any $\gamma\in\Lg$, we have a canonical identification of tangent space
\begin{equation}\label{tangent2}
\iota:T_{\gamma}(\Ad(G)\gamma)\cong\Lg/Z_{\Lg}(\gamma)
\end{equation}
given by the fact that the $F$-points of the (algebraic) tangent space of a smooth variety over $F$ is equal to the (analytic) tangent space of the $F$-points of the variety.\p

We adapt the identification $\iota$. For any lattice $L\subset\Lg/Z_{\Lg}(\gamma)$, let us write $L^*=\{Y\in\Lg/Z_{\Lg}(\gamma)\,|\,B(X,[Y,\gamma])\in tk[[t]],\;\forall X\in L\}$. We assign a measure $m_{\gamma}$ on $\Lg/Z_{\Lg}(\gamma)$ such that
\begin{equation}\label{meam}
m_{\gamma}(L)m_{\gamma}(L^*)=1
\end{equation}
for any $L$. Doing so for every element in $\Ad(G)\gamma$ induces a $G$-invariant measure on this orbit. For any $f\in C_c^{\infty}(\Lg)$, we denote by $I_{\gamma}(f)$ the integral of $f$ over $\Ad(G)\gamma$ with respect to the above measure. Recall that two elements $\gamma_1,\gamma_2\in\Lg$ are in the same stable orbit if $\gamma_2\in\Ad(F^{ur})(\gamma_1)$. Any stable orbit is a finite union of ($G$-)orbits. We then write $I_{\gamma}^{st}(f)$ for the sum of $I_{\gamma'}(f)$ where $\gamma'$ runs over a set of representatives of (the finite set of) the orbits in the stable orbit of $\gamma$.\p

\begin{lemma}\label{dilate}
Let $f\in C_c^{\infty}(\Lg)$ and let $f_{t^{-1}}$ be a dilation of $f$ by $t^{-1}$, i.e. $f_{t^{-1}}(X):=f(tX)$. Then $I_{t^{-1}\gamma}(f_{t^{-1}})=q^{\frac{1}{2}\dim\Ad(G)\gamma}\cdot I_{\gamma}(f)$.
\end{lemma}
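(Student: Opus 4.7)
The plan is to turn $I_{t^{-1}\gamma}(f_{t^{-1}})$ into an integral over $\Ad(G)\gamma$ by the change of variables $Z = t^{-1}X$, and then show that the only contribution beyond $I_{\gamma}(f)$ is the scaling factor relating the self-dual measures $m_{t^{-1}\gamma}$ and $m_{\gamma}$. Note first that $\Ad(G)(t^{-1}\gamma) = t^{-1}\Ad(G)\gamma$ and that $\phi(Y):=t^{-1}Y$ is a $G$-equivariant $F$-analytic isomorphism between these two orbits, so $\phi^*m_{t^{-1}\gamma}$ is a $G$-invariant measure on $\Ad(G)\gamma$, hence a scalar multiple of $m_{\gamma}$. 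Since $f_{t^{-1}}(t^{-1}X) = f(X)$, the lemma reduces to identifying this scalar.

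To compute the scalar, I would fix $\gamma$ and note that $Z_{\Lg}(\gamma) = Z_{\Lg}(t^{-1}\gamma)$, so both $\Lg/Z_{\Lg}(\gamma)$ and $\Lg/Z_{\Lg}(t^{-1}\gamma)$ are the same $F$-vector space. Under the canonical identifications (\ref{tangent2}) at $\gamma$ and $t^{-1}\gamma$, which send $X+Z_{\Lg}(\gamma)$ to the tangent vector $[X,\gamma]\in[\Lg,\gamma]$ and $X+Z_{\Lg}(t^{-1}\gamma)$ to $[X,t^{-1}\gamma]=t^{-1}[X,\gamma]$ respectively, the differential $d\phi_{\gamma}$, which geometrically is multiplication by $t^{-1}$ on $\Lg$, becomes the identity on $\Lg/Z_{\Lg}(\gamma)$. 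Consequently $\phi^*m_{t^{-1}\gamma} = m_{t^{-1}\gamma}$ viewed as measures on the single vector space $\Lg/Z_{\Lg}(\gamma)$, and everything reduces to comparing $m_{\gamma}$ and $m_{t^{-1}\gamma}$ there.

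For this comparison I would unwind (\ref{meam}) using the altered bilinear pairing. If $L\subset\Lg/Z_{\Lg}(\gamma)$ is a lattice and $L^*$ is its dual with respect to $\gamma$, then the dual $L^{*,t^{-1}\gamma}$ with respect to $t^{-1}\gamma$ is exactly $tL^*$: indeed $B(X,[Y,t^{-1}\gamma]) = t^{-1}B(X,[Y,\gamma])$, so $B(X,[Y,t^{-1}\gamma])\in tk[[t]]$ for all $X\in L$ is equivalent to $B(X,[Y,\gamma])\in t^2k[[t]]$, which on substituting $Y' = tY$ shows $L^{*,t^{-1}\gamma} = tL^*$. Writing $n:=\dim\Ad(G)\gamma = \dim_F(\Lg/Z_{\Lg}(\gamma))$, I get $m_{\gamma}(tL^*) = q^{-n}m_{\gamma}(L^*)$ from $|t|_F = q^{-1}$, hence $m_{\gamma}(L)m_{\gamma}(L^{*,t^{-1}\gamma}) = q^{-n}$; comparing with $m_{t^{-1}\gamma}(L)m_{t^{-1}\gamma}(L^{*,t^{-1}\gamma}) = 1$ and writing $m_{t^{-1}\gamma} = c\cdot m_{\gamma}$ forces $c = q^{n/2}$. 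Combining, $I_{t^{-1}\gamma}(f_{t^{-1}}) = q^{n/2}I_{\gamma}(f)$.

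The only real subtlety is the bookkeeping in the measure comparison: one must remember that passing from $\gamma$ to $t^{-1}\gamma$ changes the pairing used to define $L^*$, which shifts the dual lattice by a factor of $t$, and then unwind the self-duality normalization to extract the square root $q^{n/2}$ rather than $q^n$ or $1$. No other step is geometrically delicate.
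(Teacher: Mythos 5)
Your proof is correct and follows essentially the same route as the paper: both reduce to the observation that the alternating pairing $B(\cdot,[\cdot,t^{-1}\gamma])$ on $\Lg/Z_{\Lg}(\gamma)$ is $t^{-1}$ times $B(\cdot,[\cdot,\gamma])$, so the associated self-dual measures differ by $|t^{-1}|^{n/2}=q^{n/2}$. You spell out the lattice-duality bookkeeping that the paper leaves implicit, but the underlying argument is the same.
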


\begin{proof} There is a natural identification of the tangent space $T_{t^{-1}\gamma}(\Ad(G)(t^{-1}\gamma))\cong T_{\gamma}(\Ad(G)\gamma)$ by $X\mapsto tX$. However the symplectic form $B(\cdot,[\cdot,t^{-1}\gamma])$ on the first is $t^{-1}$ times the form $B(\cdot,[\cdot,\gamma])$ on the second. The resulting measure thus differ by $|t^{-1}|$ raised to the power of half of the dimension of the symplectic space, namely $q^{\frac{1}{2}\dim\Ad(G)\gamma}$.
\end{proof}\hp

We compare the normalization $I_{\gamma}$ with other common candidates when $\gamma$ is regular semisimple. In \cite[Sec. 2.6]{DK06} the normalization of the invariant measure on $G/Z_G(\gamma)$ is given as follows: there is a $G$-invariant measure on $\Lg$ such that
\[|\Lg_{y,\ge0}|\cdot|\Lg_{y,>0}|=1\]
for any $y$, after which there exists a Haar measure on $G$ such that $|G_{y,>0}|=|\Lg_{y,>0}|$ for any $y$. We note that $\Lg_{y,>0}$ and $\Lg_{y,\ge0}$ play the role of $L$ and $L^*$ in the last paragraph by (\ref{dual}). The
Haar measure on the torus $Z_G(\gamma)$ is chosen with a similar manner: Let $\bT:=Z_{\bG_F}(\gamma)$ be the underlying torus defined over $F$, where $\bG_F:=\bG\times_{\Spec k}\Spec F$. The torus $\bT$ has a connected N\`{e}ron-Raynaud model over $\CO$ (which we still denote by $\bT$). Let $\bT_0$ be the reductive quotient of $\bT\times_{\operatorname{Spec}\CO}\operatorname{Spec}k$, $T_{\ge 0}:=\bT(\CO)$ and $T_{>0}:=\ker(T_{\ge 0}\sra\bT_0(k))$ (In the general Bruhat-Tits theory as in \cite{Tits}, $T_{\ge 0}$ is the unique parahoric subgroup of $T:=\bT(F)=Z_G(\gamma)$.). The lattices $\Lt_{\ge0},\Lt_{>0}$ are defined similarly via replacing above algebraic groups by their Lie algebras. Having $T$ (resp. $\Lt$) in the place of $G$ (resp. $\Lg$) defines a Haar measure on $Z_G(\gamma)=\bT(F)$. We now take the quotient measure on $G/Z_G(\gamma)$, and denote by $I_{\gamma}^{DK}(f)$ the orbital integral over $\Ad(G)\gamma\cong G/Z_G(\gamma)$ with the quotient measure. Comparing two definitions we have
\begin{equation}\label{norm1}
I_{\gamma}(f)=q^{-v(D(\gamma))/2}\cdot I_{\gamma}^{DK}(f),
\end{equation}
where $D(\gamma)\in F^{\times}$ is the determinant of $\mathrm{ad}(\gamma):\Lg/Z_{\Lg}(\gamma)\ra\Lg/Z_{\Lg}(\gamma)$ and recall $v(D(\gamma))$ is its normalized valuation of $D(\gamma)$.\p

Another normalization, used for example in \cite{GKM04} and \cite{Ngo10}, assigns the Haar measure on $G$ and $Z_G(\gamma)$ by requiring a preferred parahoric (most commonly a hyperspecial) subgroup of $G$ and the unique parahoric subgroup of the torus $Z_G(\gamma)$ to have measure $1$. Suppose $G_{y,\ge0}$ is the preferred parahoric of $G$ and $T_{\ge0}$ is the unique parahoric subgroup of the torus $Z_G(\gamma)$. As before let $\bG_{y,0}$ and $\bT_0$ be their corresponding reductive quotients. If we denote by $I_{\gamma}^{GKM}(f)$ the orbital integral defined by this normalization, we have
\begin{equation}\label{norm2}
I_{\gamma}^{DK}(f)=\frac{|G_{y,0}|}{|T_0|}q^{(-\dim\bG_{y,0}+\dim\bT_0)/2}\cdot I_{\gamma}^{GKM}(f).
\end{equation}
This can be quickly seen as follows: In the normalization of $I_{\gamma}^{DK}(f)$, we have the Haar measure on $\Lg$ is such that $1=|\Lg_{g,\ge0}|\cdot|\Lg_{g,>0}|=q^{\dim\bG_{y,0}}\cdot|\Lg_{g,>0}|^2$. Thus $|G_{y,>0}|=|\Lg_{y,>0}|=q^{-\dim\bG_{y,0}/2}$ and $|G_{y,\ge0}|=|G_{y,0}|\cdot q^{-\dim\bG_{y,0}/2}$. A similar factor for $T$ appears on the other side of the quotient, and thus the formula follows.\p

For an open compact subset $V\subset\Lg$ we will denote by
We will denote by $1_V$ the function on $\Lg$ that takes the value $1$ on $V$ and $0$ elsewhere. In this article, we will in fact not only work with a single $k$, but with all finite extensions at the same time. Denote by $q=|k|$ (which depends on $k$). We will frequently write statements of the form
\[I_{\gamma}(1_{\Lg_{x,\ge 0}})=O(1).\]
Such a statement will mean that there exists a constant $C$, depending possibly on $\gamma$ but independent of the chosen finite extension of $k$, such that for any $k$, the orbital integral $I_{\gamma}(1_{\Lg_{x,\ge1}})$ bounded by $C$. Here $\gamma$ is realized in $(\Lie\bG)(k((t)))$, both depending on $k$. And for the definition of $\Lg_{x,\ge0}$ we refer to Remark \ref{unram} as $k$ varies. Likewise,
\[I_{\gamma}^{st}(1_{\Lg_{x,\ge0}})=1+O(q^{-1/2})\]
will mean that there exists a constant $C$, such that for any finite extension $k$ we have $1-Cq^{-1/2}\le I^{st}_{\gamma}(1_{\Lg_{x,\ge0}})\le 1+Cq^{-1/2}$, where $q=|k|$ depends on $k$, and $\gamma$, $I_{\gamma}$ and $1_{\Lg_{x,\ge0}}$ vary with $k$ in the above sense.\p

\section{Geometric preparation}\label{secgeom}

In this section, we introduce the geometric tools used in this article. Readers familiar with affine Springer fibers and comfortable with their various generalizations may skip this section except for a look at Theorem \ref{GKM}.
The geometric starting point of this work is the following: Suppose there is a variety $X$ over $k\cong\mathbb{F}_q$ and we want to count the number of its irreducible components. Suppose $X\times_{\Spec k}\Spec\bar{k}$ (which we'll  later denote by $X\times\Spec\bar{k}$ for convenience) has $C_k$ components of the top dimension $d=\dim X$ that are stabilized by $\Gal(\bar{k}/k)$. Here we write $C_k$ to emphasize that the number varies under base change. In the language introduced at the end of Section \ref{secnorm}, we have
\begin{equation}\label{basicest}
|X(k)|=C_k\cdot q^d+O(q^{k-\frac{1}{2}})=C_k\cdot q^d(1+O(q^{-1/2})).
\end{equation}
And moreover (\ref{basicest}) is equivalent to the assertion of components. To see this, let $\Frob$ be the geometric Frobenius which is the automorphism ($t^q\mapsto t$) on $\bar{k}$. Recall that the Grothendieck-Lefschetz Theorem says
\begin{equation}\label{GL}
|X(k)|=\Tr(\Frob:H_c^*(X\times\Spec\bar{k})).
\end{equation}
Here the cohomology is always the $\ell$-adic \'{e}tale cohomology, and we adapt the convention that when taking trace on a cohomology we have an extra negative sign for the odd-degree part. Each of the eigenvalues of $\Frob$ acting on the cohomology has absolute value $q^{w/2}$, where $w$ is called the {\bf weight}. The dual space of the top-degree subspace $H_c^{2d}(-)$ has a basis indexed by the top-dimensional components of $X\times\Spec\bar{k}$, and the action of $\Frob$ on $H_c^{2d}(-)$ is $q^d$ times the dual action of $\Frob\in\Gal(\bar{k}/k)$ on the set of components. The weights on the rest (i.e. $H_c^{<2d}(-)$) are strictly less than $2d$. This implies (\ref{basicest}).\p

Next we introduce affine Springer fibers. We refer the readers to \cite[\S2]{Yu16} for detailed constructions. Fix $\gamma\in\Lg$ regular semisimple. Let $I:=G_{x,\ge 0}\subset G=\bG(F)$ be an Iwahori subgroup introduced in Section \ref{secBT}. The affine flag variety is an ind-variety\footnote{The usual affine flag varieties are, in fact, ind-schemes that are not ind-reduced unless $\bG$ is semisimple. Nevertheless, for the purpose of our article we may and shall take the reduced structure.} $\CX$ with a natural identification $\CX(k)=G/I$ and the same for any finite $k'/k$ with $G$ replaced by $\bG(F')$, $F'=k'((t))$ and $I=G_{x,\ge 0}$ replaced in the manner of Remark \ref{unram}. The Iwahori affine Springer fiber $\CX_{\gamma}$ is a closed sub-ind-variety of $\CX$ such that
\begin{equation}\label{ASF}
\CX_{\gamma}(k)=\{g\in G/I\;|\;\Ad(g^{-1})\gamma\in\Lie I\},
\end{equation}
where $\Lie I:=\Lg_{x,\ge 0}$ (and similarly for $k'/k$ finite). With the assumption that $\gamma$ is regular semisimple, $\CX_{\gamma}$ is locally of finite type over $k$ \cite[\S3]{KL88} (see also \cite[Thm. 2.5.2]{Yu16}), but might have infinitely many irreducible components.

\begin{lemma}\label{equi} The ind-variety $\CX_{\gamma}$ is equi-dimensional.
\end{lemma}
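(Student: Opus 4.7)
The plan is to reduce to the case where $\bG$ is simply connected, where both finite-dimensionality and equi-dimensionality are established by Kazhdan--Lusztig \cite[\S3]{KL88}.

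First, consider the central isogeny $\pi \colon \widetilde{\bG} := \bG^{sc} \times Z(\bG)^{\circ} \to \bG$, where $\bG^{sc}$ is the simply connected cover of $\bG^{der}$ and $Z(\bG)^{\circ}$ the connected center. Under our hypothesis on $\operatorname{char}(k)$, $d\pi$ is an isomorphism of Lie algebras, so $\gamma$ lifts canonically to a regular semisimple $\widetilde{\gamma} \in \widetilde{\Lg}$. Let $\widetilde{I}$ be the Iwahori of $\widetilde{G}$ lying over $I$. Because the affine flag variety of the torus $Z(\bG)^{\circ}$ is discrete (its cocharacter lattice), the affine Springer fiber $\widetilde{\CX}_{\widetilde{\gamma}}$ is a disjoint union of copies of the Iwahori affine Springer fiber for $\bG^{sc}$. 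By \cite[\S3]{KL88} applied to the simply connected group $\bG^{sc}$, this latter ind-variety is locally of finite type and equi-dimensional, so $\widetilde{\CX}_{\widetilde{\gamma}}$ inherits both properties.

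Second, the morphism $\pi$ induces a map $\widetilde{\CX} \to \CX$ which is an open immersion onto the union of those connected components of $\CX$ whose Kottwitz class lies in the image of $\pi_1(\widetilde{\bG}) \to \pi_1(\bG)$. Restricting to affine Springer fibers identifies $\widetilde{\CX}_{\widetilde{\gamma}}$ with the union of the corresponding connected components of $\CX_{\gamma}$, all of which are therefore equi-dimensional of some common dimension $d$.

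Third, the centralizer $Z_G(\gamma) = \bT(F)$ (with $\bT := Z_{\bG_F}(\gamma)$ the centralizing maximal $F$-torus) acts on $\CX_{\gamma}$ by left multiplication; this action preserves dimensions and, via the Kottwitz homomorphism $\bT(F) \to \pi_1(\bG)$, permutes connected components. Passing to an unramified extension to split $\bT$, the map $X_*(\bT) \twoheadrightarrow \pi_1(\bG) = X_*(\bT)/Q^{\vee}$ is surjective, so every connected component of $\CX_{\gamma}$ is a $\bT(F)$-translate of a component in the image of $\widetilde{\CX}_{\widetilde{\gamma}}$. Equi-dimensionality of $\CX_{\gamma}$ of pure dimension $d$ then follows, with finite-dimensionality coming along as a byproduct.

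The main subtlety is the third step: verifying that the Kottwitz map from $\bT(F)$ surjects onto the connected components of $\CX$ met by $\CX_{\gamma}$. This is standard, but requires care with the unramified-Galois descent once $\bT$ is not $F$-split; everything goes through on passing to $F^{ur}$.
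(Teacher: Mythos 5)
Your approach is genuinely different from the paper's, and in one respect cleaner. The paper writes $\CX = \bigsqcup g_i \cdot \CX^{sc}$ with $g_i \in \bG(F^{ur})$ arbitrary (not centralizing $\gamma$), which yields $\CX_{\gamma} = \bigsqcup \CX^{sc}_{\gamma_i}$ with $\gamma_i = \Ad(g_i)^{-1}\gamma^{sc}$; these $\gamma_i$ are conjugate but distinct, so the pieces are not isomorphic to each other, and the paper must invoke Bezrukavnikov's dimension formula (\ref{Bezr}) to see that the non-empty pieces have a common dimension. Your third step instead uses translation by the centralizer $\bT$, which gives honest automorphisms of $\CX_{\gamma}$ permuting Kottwitz classes; together with the Kottwitz surjectivity this compares components directly, bypassing Bezrukavnikov altogether. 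That is a real simplification, and it makes the logical dependency structure cleaner (the paper's proof of Lemma \ref{equi} leans on (\ref{Bezr}), which is cited just below the lemma).

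There is, however, a genuine error in the third step as written: $\bT = Z_{\bG_F}(\gamma)$ does \emph{not} split over an unramified extension of $F$ in general. Under the standing hypothesis on $\mathrm{char}(k)$, $p$ does not divide $|W|$, so $\bT$ is tame — it splits over a tamely \emph{ramified} extension (see Appendix \ref{secchar}), which is typically not unramified; indeed for a ramified maximal torus no unramified base change will split it. So ``passing to an unramified extension to split $\bT$'' is not available, and the surjection $X_*(\bT) \twoheadrightarrow \pi_1(\bG)$ you invoke is not directly the map induced by the Kottwitz homomorphism for $\bT$ over $F^{ur}$. The argument can be repaired, but you must say why: the Kottwitz homomorphism $\kappa_{\bT}\colon \bT(F^{ur}) \to X_*(\bT)_I$ (inertia coinvariants) is surjective for \emph{any} torus, and because $\bG$ is split and $W$ acts trivially on $\pi_1(\bG) = X_*(T)/Q^{\vee}$ (each $w \in W$ shifts cocharacters by elements of $Q^{\vee}$), the natural map $X_*(\bT) \to \pi_1(\bG)$ is inertia-equivariant with trivial target action, hence factors through $X_*(\bT)_I$, and the induced map $X_*(\bT)_I \to \pi_1(\bG)$ is still surjective. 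This gives the required surjectivity $\bT(F^{ur}) \twoheadrightarrow \pi_1(\bG)$ without any splitting assumption. You already flagged the step as ``requires care,'' but the care required is different from what you anticipated.
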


\begin{proof} In \cite[Prop. 4.1]{KL88} the equi-dimensionality is proved under the assumption that $\bG$ is simply connected. Let $\bG^{sc}$ be the simply connected cover of the derived group $\bG^{der}$ of $\bG$. By \cite[Prop. 6.6]{PR08} $\CX$ has the same identity component as $\CX^{der}$, the affine flag variety for $\bG^{der}$. By the paragraph after (6.11) in {\it ibid.}, the identity component of $\CX^{der}$ is isomorphic to $\CX^{sc}$, the affine flag variety for $\bG^{sc}$. Thus $\CX$ is a disjoint union of translates of $\CX^{sc}$; say
\begin{equation}\label{redtosc}
\CX=\bigsqcup g_i\cdot\CX^{sc},\;g_i\in\bG(F^{ur}).
\end{equation}
Our assumption on $\mathrm{char}(k)$ ensures $\Lie\bG=\Lie\bG^{sc}\times\Lie Z(\bG)^o$, with which we may decompose $\gamma=\gamma^{sc}+\gamma^z$. If $\gamma^z\not\in(\Lie Z(\bG)^o)(\CO)$ then $\CX_{\gamma}=\emptyset$ and there is nothing to prove. Otherwise, one checks from (\ref{ASF}) that we have $\CX_{\gamma}=\bigsqcup\CX^{sc}_{\gamma_i}$ where $\gamma_i:=\Ad(g_i)^{-1}(\gamma^{sc})$, $g_i$ are as in (\ref{redtosc}), and each $\CX^{sc}_{\gamma_i}$ is the Iwahori affine Springer fiber for $\bG^{sc}$ and $\gamma^{sc}$.
Since each $\CX^{sc}_{\gamma_i}$ is equi-dimensional, it suffices to prove that they all have the same dimension. This is implied by the dimension formula of Bezrukavnikov (\ref{Bezr}) and note that conjugating $\gamma^{sc}$ by $g_i^{-1}$ in the formula doesn't change the outcome.
\end{proof}\hp


To study the components of $\CX_{\gamma}$ in the spirit of \ref{basicest}, we consider the centralizer $\bT:=Z_{\bG_F}(\gamma)$. This is a torus defined over $F$. It's possible to consider this as a pro-group scheme over $k$. That is, there is a natural projective limit $\CT$ of group schemes with natural identifications $\CT(k')=\bT(F')$ for any finite extension $k'\supset k$ and $F'=k'((t))$. Let us write $T=\CT(k)=\bT(F)=Z_G(\gamma)$. The group $\CT$ acts on $\CX$ by left translation. From (\ref{ASF}) one easily sees that the left translation action of $\CT(k)=T$ on $\CX(k)=G/I$ preserves $\CX_{\gamma}(k)$ and the same is true for any $k'/k$. Hence $\CT$ also acts on $\CX_{\gamma}$.
In fact, this $\CT$-action on $\CX_{\gamma}$ (resp. $T$-action on $\CX_{\gamma}(k)$) factors through a finite-dimensional (resp. finite) quotient. The ind-variety $\CX_{\gamma}$ has only finitely many $\CT$-orbits of components.\p

To apply (\ref{basicest}) in terms of $\CT$-orbits, there is still the issue that a $\CT$-orbit in $\CX_{\gamma}(\bar{k})$ defined over $k$ is not necessarily a $T$-orbit on $\CX_{\gamma}(k)$. Suppose there exists a $h'\in\bG(F^{ur})$ such that $\gamma':=\Ad(g')\gamma\in\Lie I$. Then $h'\in\bG(F')$ for some $F'=k'((t))$, $k'/k$ finite. The image of $h'$ in $\CX(k')$ then lies in $\CX_{\gamma}(k')$, and its $\CT$-orbit is defined over $k$. This suggests that when one considers $\CT$-orbits of $k$-points and in the same spirit the $\CT$-fixed part of the cohomology, they should be compared with stable orbital integrals. Indeed, this is the $\kappa=1$ special case of \cite[Theorem 15.8]{GKM04}.

\begin{theorem}\label{GKM} (Goresky-Kottwitz-MacPherson) Let $\gamma\in\Lg$ be regular semisimple. Then
\[I_{\gamma}^{st}(1_{\Lg_{x,\ge 0}})=\frac{|G_{x,0}|}{|T_0|}\cdot q^{(-v(D(\gamma))-\dim\bG_{x,0}+\dim\bT_0)/2}\cdot\mathrm{Tr}(\mathrm{Frob}\,;\,H^*(\CX_{\gamma}\times\Spec\bar{k})^{\CT}).\]
\end{theorem}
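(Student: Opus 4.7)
The plan is to interpret both sides of the identity as a weighted count of $\CT$-orbits on $\CX_{\gamma}(\bar{k})$ that are $\Frob$-stable, matching them up to normalization.

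For the right-hand side, I would apply Grothendieck--Lefschetz together with torus-equivariant localization. Since the $\CT$-action on $\CX_{\gamma}$ factors through a finite-dimensional torus quotient, equivariant localization identifies $H^*(\CX_{\gamma}\times\Spec\bar{k})^{\CT}$ with the cohomology of the fixed-point locus $\CX_{\gamma}^{\CT}$. Lemma \ref{equi} together with a $\CT$-equivariant filtration of $\CX_{\gamma}$ by finite-type subschemes (which exists because $\CX_{\gamma}$ is locally of finite type and $\CT$ acts through a finite-dimensional quotient) then lets us apply Grothendieck--Lefschetz stratum by stratum, so $\Tr(\Frob;H^*(\CX_{\gamma}\times\Spec\bar{k})^{\CT})$ computes a $q$-weighted count of Frobenius-stable $\CT$-orbits of components of $\CX_{\gamma}^{\CT}$, with the top-dimensional orbits contributing $q^{\dim\CX_{\gamma}}$ each.

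For the left-hand side, I would unfold $I_{\gamma}^{GKM}(1_{\Lg_{x,\ge 0}})$ in the normalization of Section \ref{secnorm} with $|I|=|T_{\ge 0}|=1$. Because the integrand is left-$T$- and right-$I$-invariant, the integral over $G/T$ becomes a sum over $T$-orbits on $\CX_{\gamma}(k)$, each weighted by the reciprocal of the measure of the pointwise stabilizer $T\cap gIg^{-1}$. Passing from $I_{\gamma}^{GKM}$ to $I_{\gamma}^{st,GKM}$ sums over $G$-conjugacy classes within the stable orbit of $\gamma$; a standard twisting argument, identifying the set of such rational classes with $\ker(H^1(F,T)\to H^1(F,G))$ via Kottwitz, converts this to the count of Frobenius-stable $\CT$-orbits on $\CX_{\gamma}(\bar{k})$, which matches the geometric count from the previous paragraph. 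Finally the normalization comparisons (\ref{norm1}) and (\ref{norm2}) convert $I_{\gamma}^{st,GKM}$ to $I_{\gamma}^{st}$ and introduce exactly the prefactor $\frac{|G_{x,0}|}{|T_0|}q^{(-v(D(\gamma))-\dim\bG_{x,0}+\dim\bT_0)/2}$.

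The main obstacle is the Galois-cohomological matching between $G$-conjugacy classes in a stable orbit and $\Frob$-stable $\CT$-orbits on the geometric $\CX_{\gamma}$: one must verify that the twisting bijection on the orbital integral side really intertwines with the component-group bijection on the geometric side, which is the substance of the $\kappa=1$ specialization of the GKM endoscopic identity. Secondary technical issues include making the torus localization theorem precise for the ind-variety $\CX_{\gamma}$ with pro-group action $\CT$, and ensuring purity so that the trace on $H^*(\CX_{\gamma})^{\CT}$ genuinely records the orbit count rather than merely an upper bound. Once these are established, the remaining Haar-measure bookkeeping via Section \ref{secnorm} is routine.
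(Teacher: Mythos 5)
The paper does not prove Theorem~\ref{GKM}: it states the result and attributes it to \cite[Theorem 15.8]{GKM04} (the $\kappa=1$ specialization), with a pointer to the streamlined proof in \cite[3.4.11]{Yu16}. The only original content in the paper's treatment is the normalization bookkeeping translating $I_\gamma^{GKM}$ into $I_\gamma$ via (\ref{norm1}) and (\ref{norm2}), which produces the prefactor $\frac{|G_{x,0}|}{|T_0|}q^{(-v(D(\gamma))-\dim\bG_{x,0}+\dim\bT_0)/2}$. Your proposal, by contrast, attempts to reconstruct the GKM trace identity from scratch, and here there is a concrete error. You claim that ``equivariant localization identifies $H^*(\CX_{\gamma}\times\Spec\bar{k})^{\CT}$ with the cohomology of the fixed-point locus $\CX_{\gamma}^{\CT}$.'' This is not what $(-)^{\CT}$ means and is false even in the simplest cases: if a split torus acts on $\mathbb{P}^1$ with two fixed points, then $H^*(\mathbb{P}^1)^{\CT}=H^*(\mathbb{P}^1)$ has Frobenius trace $1+q$, while $H^*((\mathbb{P}^1)^{\CT})$ has Frobenius trace $2$. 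Here $\CT$ acts on cohomology through the finite group $\pi_0$ of its finite-dimensional quotient, and $(-)^{\CT}$ denotes invariants under that action; the correct geometric incarnation (as in \cite[3.4.11]{Yu16} and as used later in Appendix~\ref{appred}) is to pass to the quotient $\tilde\Lambda_\gamma\backslash\CX_\gamma$ by a free discrete sublattice $\tilde\Lambda_\gamma\subset\CT$ surjecting onto $\pi_0(\CT)$, so that $H^*(\CX_\gamma\times\Spec\bar{k})^{\CT}=H^*((\tilde\Lambda_\gamma\backslash\CX_\gamma)\times\Spec\bar{k})$ and Grothendieck--Lefschetz applies to the compact quotient. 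Equivariant localization in the sense of Goresky--Kottwitz--MacPherson (computing $H^*_{\CT}$ via the fixed locus) enters the original proof of \cite[Thm.\ 15.8]{GKM04} but not in the form you invoke.

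Beyond this, the left-hand-side argument glosses the substance of the theorem: the ``standard twisting argument'' matching $\ker(H^1(F,T)\to H^1(F,G))$ with Frobenius-stable $\CT$-orbits, and the verification that this bijection is compatible with the point-count on $(\tilde\Lambda_\gamma\backslash\CX_\gamma)(k)$, is exactly the content that makes \cite[Thm.\ 15.8]{GKM04} (or \cite[3.4.8--3.4.11]{Yu16}) nontrivial. You correctly flag this as ``the main obstacle,'' but for the purposes of this paper it is not an obstacle to be overcome---it is the cited black box. What you should supply instead is the short computation showing that (\ref{norm1}) and (\ref{norm2}) (taking $y=x$) convert the GKM normalization into the normalization $I_\gamma$ of Section~\ref{secnorm} and yield precisely the displayed prefactor; that is the paper's contribution here.
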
\hp

Note that the extra factors before the Frobenius trace comes from (\ref{norm1}) and (\ref{norm2}). In \cite[3.4.11]{Yu16} a simpler proof of the $\kappa=1$ case is also given except that we need to replace \cite[3.2.6]{Yu16} by (\ref{GKM}); we will refer to this proof later. Let $d=\dim\CX_{\gamma}$. Thanks to Lemma \ref{equi}, $H^{2d}(\CX_{\gamma}\times\Spec\bar{k})^{\CT}$ (or equivalently, its dual space) has a basis indexed by $\CT$-orbits of irreducible geometric components. Let $C_k$ be the number of such orbits that are stabilized by $\Gal(\bar{k}/k)$. Then as a consequence of Theorem \ref{GKM} we have
\begin{equation}\label{GKMe}
I_{\gamma}^{st}(1_{\Lg_{x,\ge 0}})=C_k\cdot q^{d+(-v(D(\gamma))+\dim\bG_{x,0}-\dim\bT_0)/2}\cdot (1+O(q^{-1/2})).
\end{equation}
Here we also apply (\ref{basicest}) to $|G_{x,0}|$ and $|T_0|$ and use that both $\bG_{x,0}$ and $\bT_0$ are irreducible.
\p

Analogues of Theorem \ref{GKM} and (\ref{GKMe}) also hold for various generalized affine Springer fibers. Fix $y\in\CA_{\bS}$. To begin with one may replace $I=G_{x,\ge0}$ by any parahoric subgroup $G_{y,\ge 0}$, and the affine Springer fiber for $y$ is an ind-variety $\CX_{y,\gamma}$ with which the analogue of (\ref{ASF}) holds, that is
\begin{equation}\label{ASF2}
\CX_{y,\gamma}(k)=\{g\in G/G_{y,\ge 0}\;|\;\Ad(g^{-1})\gamma\in\Lg_{y,\ge 0}\},
\end{equation}
and likewise for any $k'/k$ finite. In this case, Theorem \ref{GKM} and (\ref{GKMe}) become
\begin{equation}\label{GKM2}
I_{\gamma}^{st}(1_{\Lg_{y,\ge 0}})=\frac{|G_{y,0}|}{|T_0|}\cdot q^{(-v(D(\gamma))-\dim\bG_{y,0}+\dim\bT_0)/2}\cdot\mathrm{Tr}(\mathrm{Frob}\,;\,H^*(\CX_{y,\gamma}\times\Spec\bar{k})^{\CT}).
\end{equation}
and consequently 
\begin{equation}\label{GKMe2}
I_{\gamma}^{st}(1_{\Lg_{y,\ge 0}})=C_k\cdot q^{\dim\CX_{y,\gamma}+(-v(D(\gamma))+\dim\bG_{y,0}-\dim\bT_0)/2}\cdot (1+O(q^{-1/2}))
\end{equation}
where $C_k$ is the number of $\Frob$-invariant $\CT$-orbits of top-dimensional geometric components of $\CX_{y,\gamma}$. Equation (\ref{GKM2}) is also a special case of \cite[Theorem 15.8]{GKM04}, for which a simpler proof can be given as in \cite[3.4.11]{Yu16} as long as we replace \cite[3.2.6]{Yu16} by (\ref{ASF2}).\p

We may further generalize (\ref{GKM2}) and (\ref{GKMe2}). Keep $y\in\CA_{\bS}$ and fix an element $\bar{e}\in\Lg_{y,0}$. Consider the locally closed subvariety $\Ad(\bG_{y,0})\bar{e}\in\Lie\bG_{y,0}$. Let $\Lg_{y,\ge0}^{(\bar{e})}\subset\Lg_{y,\ge0}$ be the preimage of $(\Ad(\bG_{y,0})\bar{e})(k)\subset\Lg_{y,0}$. There is a generalized affine Springer fiber $\CX_{y,\bar{e},\gamma}$, which is an locally closed sub-ind-variety of $\CX_{y,\gamma}$, such that (and likewise for $k'/k$ finite)
\begin{equation}\label{ASF3}
\CX_{y,\bar{e},\gamma}(k)=\{g\in G/G_{y,\ge 0}\;|\;\Ad(g^{-1})\gamma\in\Lg_{y,\ge 0}^{(\bar{e})}\},
\end{equation}
Replacing \cite[3.2.6]{Yu16} by (\ref{ASF3}), the proof in \cite[3.2.11]{Yu16} gives the following variant of Theorem \ref{GKM}
\begin{equation}\label{GKM3}
I_{\gamma}^{st}(1_{\Lg_{y,\ge 0}^{(\bar{e})}})=\frac{|G_{y,0}|}{|T_0|}\cdot q^{(-v(D(\gamma))-\dim\bG_{y,0}+\dim\bT_0)/2}\cdot\mathrm{Tr}(\mathrm{Frob}\,;\,H_c^*(\CX_{y,\bar{e},\gamma}\times\Spec\bar{k})^{\CT}).
\end{equation}
and consequently
\begin{equation}\label{GKMe3}
I_{\gamma}^{st}(1_{\Lg_{y,\ge 0}^{(\bar{e})}})=C_k\cdot q^{\dim\CX_{y,\bar{e},\gamma}+(-v(D(\gamma))+\dim\bG_{y,0}-\dim\bT_0)/2}\cdot (1+O(q^{-1/2}))
\end{equation}
where $C_k$ is the number of $\Frob$-invariant $\CT$-orbits of top-dimensional geometric components of $\CX_{y,\bar{e},\gamma}$. Equation (\ref{GKMe3}) will be used in Section \ref{secSha} in proving (\ref{A6}).\p

Now suppose in (\ref{ASF2}) and (\ref{ASF3}) $y$ lies in the closure of the alcove containing $x$ (see the paragraph before (\ref{depth})). Then there exists a natural map
\begin{equation}\label{mapSpr}
\CX_{\gamma}\ra\CX_{y,\gamma}.
\end{equation}
On the level of $k$-points (and likewise for $k'$-points) it comes from the natural quotient map $G/G_{x,\ge 0}\sra G/G_{y,\ge 0}$. Above any $\bar{k}$-point in $\CX_{y,\bar{e},\gamma}\subset\CX_{y,\gamma}$, the fiber is by definition isomorphic to
\begin{equation}\label{SprF}
\CB_{\bar{e}}:=\{g\in\bG_{y,0}/\mathbf{B}\;|\;\Ad(g^{-1})(\bar{e})\in\Lie\mathbf{B} \}
\end{equation}
where $\mathbf{B}\subset\bG_{y,0}$ is any Borel subgroup. The variety $\CB_{\bar{e}}$ described in (\ref{SprF}) is what is called a {\bf Grothendieck-Springer fiber}, or just {\bf Springer fiber} when $\bar{e}$ is nilpotent. It has a well-known dimension formula
\begin{equation}\label{Sprdim}
\dim\CB_{\bar{e}}=\frac{1}{2}(\dim Z_{\bG_{y,0}}(\bar{e})-\rank\bG_{y,0}).
\end{equation}
\p




\section{Dimension formulas}\label{secdim}

As in Section \ref{secBT} we fix an alcove $x$, so that we have an Iwahori subgroup $I:=G_{x,\ge0}$. Fix $\gamma\in\Lg=(\Lie\bG)(F)$ regular semisimple as in (\ref{ASF}). Recall that we have (\ref{GKMe}):
\[I_{\gamma}^{st}(1_{\Lg_{x,\ge 0}})=C_k\cdot q^{\dim\CX_{\gamma}+(-v(D(\gamma))+\dim\bG_{x,0}-\dim\bT_0)/2}\cdot (1+O(q^{-1/2})).\]
where $C_k$ is the number of $\CT$-orbits of geometric components of $\CX_{\gamma}\times\Spec\bar{k}$ that are stabilized by $\Gal(\bar{k}/k)$. Note that $\bG_{x,0}=\bS$ and $\dim\bG_{x,0}=\rank\bG$. On the other hand, it was conjectured in \cite[\S0]{KL88} and proved by Bezrukavnikov \cite{Be96} that 
\begin{equation}\label{Bezr}
\dim\CX_{\gamma}=\frac{1}{2}(v(D(\gamma))-\rank\bG+\dim\bT_0)
\end{equation}
when $\CX_{\gamma}$ is non-empty\footnote{\label{rem}The result in \cite{Be96} was stated for $\gamma$ topologically nilpotent, but one can reduce compact $\gamma$ to the topologically nilpotent case as in \cite[\S5]{KL88}.}. Since a stable orbital integral is a sum of orbital integrals (all non-negative in the following), this implies

\begin{corollary}\label{dim}For $\gamma\in\Lg$ regular semisimple, we have $I_{\gamma}(1_{\Lg_{x,\ge 0}})=O(1)$.
\end{corollary}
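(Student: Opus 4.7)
The plan is to combine Bezrukavnikov's dimension formula (\ref{Bezr}) with the Goresky--Kottwitz--MacPherson asymptotic (\ref{GKMe}) and observe that the exponent of $q$ cancels exactly, then invoke positivity.

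First I would plug $\dim\CX_\gamma = \tfrac{1}{2}(v(D(\gamma)) - \rank\bG + \dim\bT_0)$ into the exponent of $q$ on the right-hand side of (\ref{GKMe}), using the identity $\dim\bG_{x,0} = \dim\bS = \rank\bG$ noted just before the corollary. The two halves telescope:
\[
\tfrac{1}{2}\bigl(v(D(\gamma)) - \rank\bG + \dim\bT_0\bigr) + \tfrac{1}{2}\bigl(-v(D(\gamma)) + \rank\bG - \dim\bT_0\bigr) = 0.
\]
Consequently (\ref{GKMe}) collapses to the clean relation $I_\gamma^{st}(1_{\Lg_{x,\ge 0}}) = C_k\cdot(1 + O(q^{-1/2}))$, where $C_k$ is the number of $\Frob$-stable $\CT$-orbits among top-dimensional components of $\CX_\gamma \times \Spec\bar{k}$.

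Next I would bound $C_k$ uniformly in $k$. The number $C_k$ is at most the total number of $\CT$-orbits of components of $\CX_\gamma\times\Spec\bar{k}$, and as recalled in Section \ref{secgeom} this total is finite and depends only on $\gamma$ (the geometric object $\CX_\gamma \times \Spec\bar{k}$ is insensitive to the choice of finite base $k$; only the Galois action on its component set changes). Hence $I_\gamma^{st}(1_{\Lg_{x,\ge 0}}) = O(1)$ in the sense of the end of Section \ref{secnorm}.

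Finally, to pass from the stable to the ordinary orbital integral, I would use positivity. By definition $I_\gamma^{st}(1_{\Lg_{x,\ge 0}}) = \sum_{\gamma'} I_{\gamma'}(1_{\Lg_{x,\ge 0}})$ summed over representatives of the orbits in the stable class of $\gamma$, and each summand is the integral of a non-negative function against a positive measure, hence non-negative. Thus $I_\gamma(1_{\Lg_{x,\ge 0}}) \le I_\gamma^{st}(1_{\Lg_{x,\ge 0}}) = O(1)$. There is no real obstacle to overcome: the deep content is already encoded in (\ref{GKMe}) and (\ref{Bezr}), and the corollary is a direct algebraic consequence of their compatibility combined with geometric finiteness.
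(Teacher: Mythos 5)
Your proof is correct and follows exactly the paper's (very brief) argument: substitute Bezrukavnikov's formula into the exponent of $q$ in (\ref{GKMe}) so that it vanishes (using $\dim\bG_{x,0}=\rank\bG$), bound $C_k$ by the total finite number of $\CT$-orbits of components, and then use that each $I_{\gamma'}$ in the stable sum is non-negative to pass from $I_\gamma^{st}$ to $I_\gamma$. You have merely spelled out the steps the paper compresses into a single sentence.
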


Nevertheless, in Section \ref{secRR} we will show that the same result holds for nilpotent orbits:

\begin{proposition}\label{comb} For $e\in\Lg$ nilpotent, we have $I_e(1_{\Lg_{x,\ge 0}})=O(1)$.
\end{proposition}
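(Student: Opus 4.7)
The plan is to compute $I_e(1_{\Lg_{x,\ge 0}})$ explicitly via Ranga Rao's method and verify that all powers of $q$ cancel. This parallels how Corollary \ref{dim} is deduced from Bezrukavnikov's dimension formula \eqref{Bezr}, with the classical finite-field Springer fiber dimension formula \eqref{Sprdim} now playing the role of \eqref{Bezr}.

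First I would reduce $e$ to DeBacker normal form: by Theorem \ref{De1}, after $G$-conjugation I may assume $e \in \bar{e} + \Lg_{y,>0}$ for some $y$ in the closure of the alcove containing $x$ and some nilpotent $\bar{e} \in \Lg_{y,0}$. Our characteristic hypothesis lets me choose a Jacobson--Morozov $\mathfrak{sl}_2$-triple $(\bar{e}, \bar{h}, \bar{f})$ in $\Lg_{y,0}$ and lift it to an $\mathfrak{sl}_2$-triple $(e, h, f)$ in $\Lg$ with $h \in \Lg_{y,\ge 0}$ reducing to $\bar{h}$ (possibly adjusting $e$ by a $G_{y,\ge 0}$-conjugate). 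The grading $\Lg = \bigoplus_i \Lg(i)$ by $\ad(h)$-eigenvalues is compatible with the Moy--Prasad filtration at $y$, and $e \in \Lg(2) \cap \Lg_{y,\ge 0}$. Write $P \subset G$ for the $F$-parabolic with Lie algebra $\Lg(\ge 0) := \bigoplus_{i \ge 0} \Lg(i)$.

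Next I would apply Ranga Rao's formula, which uses the Jacobson--Morozov slice to parametrize the orbit: the map $(g, X) \mapsto \Ad(g)(e + X)$ from $G \times \Lg(\ge 2)$ descends modulo $P$ to a generically finite cover of $\overline{\Ad(G) e}$, with Jacobian determined by the $\mathfrak{sl}_2$-action on $\Lg$. This rewrites
\[
I_e(1_{\Lg_{x,\ge 0}}) = c(e) \int_{G/P} \int_{\Lg(\ge 2)} 1_{\Lg_{x,\ge 0}}(\Ad(g)(e + X)) \, dX \, d\bar{g}
\]
for an explicit constant $c(e)$ depending only on the $\mathfrak{sl}_2$-datum. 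For the Iwahori indicator, the outer condition forces $\Ad(g)(e)$ to land in $\Lg_{x,\ge 0}$ up to a correction in $\Lg(\ge 2)$, restricting $g \in G/P$ to a finite union of affine Bruhat-type cells, finitely many and uniformly bounded in $k$ because $e$ has bounded depth. In each cell the inner integration over $\Lg(\ge 2)$ becomes a volume computation for a lattice cut out by a linear condition, and yields an explicit power of $q$.

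The last step is to collect all $q$-exponents. I expect the contributions from $c(e)$, the lattice volume, the measure on $G/P$, and the normalization \eqref{meam} to combine via \eqref{Sprdim} to give exactly $q^0$, whence $I_e(1_{\Lg_{x, \ge 0}}) = O(1)$. The main obstacle is precisely this exact cancellation of $q$-powers. The ingredients on the $p$-adic side are balanced by the classical Springer fiber dimension formula, which makes the cancellation plausible and justifies the slogan that Proposition \ref{comb} is a ``nilpotent analogue'' of Bezrukavnikov's formula; but matching the exponents rigorously requires carefully propagating the measure normalizations of Section \ref{secnorm} through Ranga Rao's formula and through the stratification of $G/P$ against the Iwahori subgroup.
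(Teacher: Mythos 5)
Your instinct to use Ranga Rao is correct, and the paper does indeed prove this by implementing Ranga Rao's method (Section \ref{secRR}). But your route differs significantly from the paper's, and I think several steps would cause real trouble.

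The key simplification you are missing is that one should take the Ranga Rao base point to be $y=x$ \emph{itself} (the alcove point), not a DeBacker-adapted $y$. With $y=x$ the reductive quotient is the torus $\bS$, so $\Lg_{x,0}$ lies entirely in the $\lambda$-weight-$0$ subspace: ${}^{\lambda}_{<0}\Lg_{x,0}=0$ and ${}^{\lambda}_{\ge 2}\Lg_{x,0}=0$. This kills all the bookkeeping at once: for $X\in\Lg_{x,\ge 0}$ one has $\ad(X)({}^{\lambda}_{-1}\Lg_{x,\ge0})=\ad(X)({}^{\lambda}_{-1}\Lg_{x,>0})\subset{}^{\lambda}_{1}\Lg_{x,>0}$, whence $\mu_{odd}\le 1$ on $\Lg_{x,\ge 0}$, while $\mu_{even}({}^{\lambda}_{\ge2}\Lg_{x,>0})=1$ exactly by the normalization \eqref{even}. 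Summed over the finitely many Iwasawa double cosets $W_x\bsl W/W_P=W/W_P$, each term is $\le 1$, giving the bound. No DeBacker's Theorem \ref{De1}, no Jacobson--Morozov lifting to $\Lg_{y,\ge 0}$, and no Springer dimension formula \eqref{Sprdim} are needed. DeBacker-adapted base points \emph{do} appear later, but only for the finer estimates \eqref{A4} on the germ functions $f_i$, not here.

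There are also internal gaps in the proposed computation. First, integrating over $G/P$ is not a compact problem; you would need precisely the Iwasawa decomposition $G=\sqcup_\alpha G_{y,\ge 0}w_\alpha P$ to reduce to finitely many pieces, and for that you need $f$ to be $G_{y,\ge 0}$-conjugation invariant, which holds automatically for $f=1_{\Lg_{x,\ge 0}}$ only when $y=x$ — otherwise you must first average $f$. Your claim that the Iwahori support ``restricts $g\in G/P$ to a finite union of affine Bruhat-type cells, finitely many and uniformly bounded in $k$'' is not a rigorous step: each Iwasawa piece is already infinite, and the bounding must happen inside each piece via the measure, not by the support becoming a finite union. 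Second, expecting the $q$-powers to cancel \emph{exactly} is too strong; even in the paper's clean computation, what comes out is an upper bound $I_e(1_{\Lg_{x,\ge0}})_{\alpha}\le\mu_{even}({}^{\lambda}_{\ge2}\Lg_{x,>0})=1$, not an equality (equality holds only for the regular nilpotent, where it gives $|W|$, used in \eqref{A1}). Propagating the measure normalizations exactly, as you propose, would be a much harder and unnecessary task; the $O(1)$ statement requires only an inequality, and that inequality falls out immediately from the torus structure of $\bG_{x,0}$.
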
\hp

It will be nice to have a geometric proof for Proposition \ref{comb}, and/or a $p$-adic analytic proof for Corollary \ref{dim}. We'd also like to make the following conjecture.\p

\begin{conjecture} For any $\gamma\in\Lg$, we have $I_{\gamma}(1_{\Lg_{x,\ge 0}})=O(1)$.
\end{conjecture}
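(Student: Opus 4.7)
The plan is to reduce the conjecture to Corollary~\ref{dim} (the regular semisimple case) and Proposition~\ref{comb} (the nilpotent case) via Harish-Chandra descent along the Jordan decomposition $\gamma = \gamma_s + \gamma_n$. Let $\mathbf{M}$ be the centralizer of $\gamma_s$ in $\bG \times_{\Spec k}\Spec F$ — a connected reductive group over $F$, by our assumption on $\mathrm{char}(k)$ — and write $M := \mathbf{M}(F)$. Then $\gamma_n \in \Lie\mathbf{M}$ is nilpotent, and the iterated-integration descent formula gives
\[
I_\gamma^G(f) \;=\; \int_{G/M} I_{\gamma_n}^M\!\big(\Phi_g f\big)\,dg, \qquad \Phi_g f(Y) := f\big(\Ad(g)(\gamma_s + Y)\big),
\]
for any $f \in C_c^\infty(\Lg)$.

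For $f = 1_{\Lg_{x,\ge 0}}$, the first technical step is to show that, after choosing a suitable set of representatives $g$, the translated function $\Phi_g(1_{\Lg_{x,\ge 0}})$ on $\Lie\mathbf{M}$ is (a scalar multiple of) the characteristic function of an affine-lattice translate of $(\Lie\mathbf{M})_{y,\ge 0}$ for some point $y$ in the Bruhat-Tits building of $\mathbf{M}$. This should follow by intersecting the lattice $\Ad(g^{-1})\Lg_{x,\ge 0}$ with $\Lie\mathbf{M}$ and identifying the induced filtration with a Moy-Prasad filtration on $\mathbf{M}$ via the description in Section~\ref{secBT}. The second step is to extend Proposition~\ref{comb} to (possibly non-split) connected reductive $\mathbf{M}$: Ranga Rao's method developed in Section~\ref{secRR} uses only the Moy-Prasad filtration and general parahoric structure, so it should carry over to the quasi-split case by the same argument and then descend from the quasi-split inner form via an unramified base change. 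Each inner orbital integral would then be bounded by $O(1)$ uniformly in $k$.

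The main obstacle is to bound the outer integration. The effective domain parametrizes $M$-orbits of pairs $(g, y)$ with $\Ad(g^{-1})\gamma_s \in (\Lie\mathbf{M})_{y,\ge 0}$ together with a compatible parahoric structure on $\mathbf{M}$ — essentially a \emph{generalized affine Springer fiber} attached to the non-regular semisimple element $\gamma_s$. When $\gamma_s$ is regular this is exactly the finite-dimensional object controlled by (\ref{GKMe}); for non-regular $\gamma_s$ it is in general infinite-dimensional as an ind-variety, and a new dimension-and-component estimate — a ``semisimple analogue'' of Proposition~\ref{comb} — is required to close the argument. Producing such an estimate seems to demand a genuinely new ingredient beyond the $p$-adic orbital-integral techniques of this paper, and is, in my view, the real crux of the conjecture.
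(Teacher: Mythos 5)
The statement you are addressing is explicitly posed as a \emph{conjecture} in Section~\ref{secdim}, not a theorem --- the paper does not prove it, and there is no proof of record to compare your attempt against. The paper establishes only the two extreme cases: the regular semisimple case (Corollary~\ref{dim}, as a consequence of Bezrukavnikov's formula~(\ref{Bezr}) and Theorem~\ref{GKM}) and the nilpotent case (Proposition~\ref{comb}, via Ranga Rao's method in Section~\ref{secRR}). Your reduction plan along the Jordan decomposition $\gamma=\gamma_s+\gamma_n$, with $\mathbf{M}=Z_{\bG\times\Spec F}(\gamma_s)$, is the natural thing to try, and your diagnosis of where it stalls is correct and matches the paper's own framing of the question as open: the outer integral over $G/M$ is taken over a ``generalized affine Springer fiber'' attached to the non-regular semisimple element $\gamma_s$, for which the paper supplies no dimension or component-count estimate. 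Producing such an estimate would indeed be a new ingredient.

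A few remarks to sharpen the intermediate steps, should you pursue this. First, when $\gamma_s\in\Lg_{g^{-1}x,\ge0}$ (the only case in which $\Phi_g(1_{\Lg_{x,\ge0}})\not\equiv0$), the intersection $\Lg_{g^{-1}x,\ge0}\cap\Lie\mathbf{M}$ is automatically an Iwahori Lie algebra for $\mathbf{M}$, not merely a general Moy--Prasad lattice: the affine hyperplanes of $\mathbf{M}$ in a common apartment form a subset of those of $\bG$, so a point $g^{-1}x$ in a $\bG$-alcove lies in an $\mathbf{M}$-alcove, and $\Phi_g(1_{\Lg_{x,\ge0}})=1_{(\Lie\mathbf{M})_{g^{-1}x,\ge0}}$ exactly (no scalar, no translate). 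So your inner integral really is of the shape handled by Proposition~\ref{comb}, modulo extending that proposition to (unramified inner forms of) quasi-split groups, which is plausible but not done in the paper. Second, the paper's normalization $m_\gamma$ of~(\ref{meam}) is built from the symplectic form $B(\cdot,[\cdot,\gamma])$ on $\Lg/Z_\Lg(\gamma)$; its decomposition across $\Lg/\Lie\mathbf{M}\oplus\Lie\mathbf{M}/Z_{\Lie\mathbf{M}}(\gamma_n)$ introduces a Weyl-discriminant factor $|D_{G/M}(\gamma_s)|^{-1/2}$ into the outer integrand, and this factor (which grows as $\gamma_s$ approaches a more singular element) is part of what makes the outer estimate nontrivial. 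None of this changes your conclusion: the conjecture remains open, and you have correctly located the crux.
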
\hp

\begin{remark} Suppose one normalizes orbital integrals over our finite field $k$ (which becomes normalized sums) as follows: Say $\bH$ is a reductive group over $k$. Let $\bar{\gamma}\in\mathfrak{h}=(\Lie\bH)(k)$. For any function $f$ on $\mathfrak{h}$, define $I_{\bar{\gamma}}(f)$ to be the sum of the function along $({}^{\bH}\bar{\gamma})(k)$, multiplied by $q^{-\frac{1}{2}\dim{}^{\bH}\bar{\gamma}}$. Note that tangent spaces of the orbit are vector spaces of dimension $\dim{}^{\bH}\bar{\gamma}$ over $k$. Hence this agrees with our normalization in Section \ref{secnorm}. It turns out that this $I_{\bar{\gamma}}$ is a distribution given by the pure perverse sheaf $\underline{\bar{\Q}_{\ell}}[\dim{}^{\bH}\bar{\gamma}](-\frac{1}{2}\dim{}^{\bH}\bar{\gamma})$ on ${}^{\bH}\bar{\gamma}$.\hp

On the other hand, a direct calculation shows that $I_{\bar{\gamma}}(1_{\mathfrak{b}})=O(1)$ is bound given by the well-known dimension formula (\ref{Sprdim}) for the Grothendieck-Springer fibers. Therefore, Bezrukavnikov's result may be realized as the exact affine generalization of the dimension formula for the Grothendieck-Springer fibers.
\end{remark}\p

\section{Shalika germ expansion}\label{secSha}

We now assume $\gamma=t\gamma_0\in\Lg$ is regular semisimple with $\gamma_0$ topologically nilpotent. In other words $\gamma$ has depth $>1$. We again write $\bT:=Z_{\bG}(\gamma)$, and denote by $W$ the Weyl group of $\bG$. Recall that we have fixed an $x\in\CA_{\bS}$ that lies in an alcove. Our goal is to prove the following stronger version of (\ref{dim})
\begin{equation}\label{maineq0}
I_{\gamma}^{st}(1_{\Lg_{x,\ge 0}})=|W|+O(q^{-1/2}).
\end{equation}

\begin{lemma} Equation (\ref{maineq0}) implies Theorem \ref{main}.
\end{lemma}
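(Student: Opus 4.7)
The plan is to combine equation (\ref{maineq0}) with the point-counting formula (\ref{GKMe}) and Bezrukavnikov's dimension formula (\ref{Bezr}). The key observation is that these exactly cancel: substituting (\ref{Bezr}) into the exponent of $q$ in (\ref{GKMe}), and using $\bG_{x,0}=\bS$ so that $\dim\bG_{x,0}=\rank\bG$, one checks that
\[\dim\CX_{\gamma}+\tfrac{1}{2}(-v(D(\gamma))+\dim\bG_{x,0}-\dim\bT_0)=0,\]
and therefore (\ref{GKMe}) reduces to $I_{\gamma}^{st}(1_{\Lg_{x,\ge 0}})=C_{k}\cdot(1+O(q^{-1/2}))$, where $C_k$ counts the $\Frob$-stable $\CT$-orbits of top-dimensional geometric components of $\CX_{\gamma}$. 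Combined with (\ref{maineq0}), this gives
\[C_{k}\cdot(1+O(q^{-1/2}))=|W|+O(q^{-1/2}).\]

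I would next apply this identity uniformly over every finite base extension $k_d=\mathbb{F}_{q^d}$. Since the error term in (\ref{GKMe}) is governed by the weights on the fixed cohomology $H^{<2d}_c(\CX_{\gamma}\times\Spec\bar{k})^{\CT}$, the implicit constants are independent of $d$. Because $C_{k_d}$ is a non-negative integer bounded by the fixed total number of $\CT(\bar{k})$-orbits of top components, the identity forces $C_{k_d}=|W|$ for all sufficiently large $d$.

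To conclude, let $\Omega$ denote the finite set of $\CT(\bar{k})$-orbits of top-dimensional components, on which $\Frob$ acts as a permutation, and note $C_{k_d}$ equals the number of $\Frob^d$-fixed points on $\Omega$. Choosing $d$ large \emph{and} divisible by the order of $\Frob$ acting on $\Omega$ gives $|\Omega|=C_{k_d}=|W|$. Knowing $C_{k_d}=|\Omega|$ for all large $d$ means $\Frob^d$ acts trivially on $\Omega$ for every sufficiently large $d$; picking two coprime such values of $d$ forces $\Frob$ itself to act trivially on $\Omega$. Hence $C_k=|\Omega|=|W|$ already over the original field $k$, which is precisely the content of Theorem \ref{main}. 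The only substantive input is (\ref{maineq0}); beyond that, the one non-trivial step is recognizing the exact cancellation produced by Bezrukavnikov's formula, and the rest is a routine combinatorial wrap-up.
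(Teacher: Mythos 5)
Your proof is correct and matches the paper's argument: plug (\ref{Bezr}) and (\ref{maineq0}) into (\ref{GKMe}), observe the exponent of $q$ cancels exactly (using $\dim\bG_{x,0}=\rank\bG$), and deduce $C_{k}=|W|$ over every sufficiently large extension, which pins down both the count of $\CT(\bar{k})$-orbits and the triviality of the $\Frob$-action. The paper states this last step in one terse sentence ("Since [this] holds for any finite base change..."); your careful unwinding via divisibility and coprimality of $d$ is exactly the intended argument, just made explicit.
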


\begin{proof} One plugs (\ref{Bezr}) and (\ref{maineq0}) into (\ref{GKMe}) and concludes that the number of $\CT$-orbits of components of $\CX_{\gamma}$ that are stabilized by $\Gal(\bar{k}/k)$ is equal to the order of the Weyl group. Since holds for any finite base change of $k$, Theorem \ref{main} is proved.
\end{proof}\hp

The rest of this section is devoted to the proof of (\ref{maineq0}). Recall $\Lg_{x,\ge -1}=t^{-1}\Lg_{x,\ge 0}$. Thanks to Lemma \ref{dilate}, equation (\ref{maineq0}) is equivalent to
\begin{equation}\label{maineq}
I_{\gamma_0}^{st}(1_{\Lg_{x,\ge -1}})= q^{\frac{1}{2}\dim\Ad(G)\gamma}\cdot(|W|+O(q^{-1/2})).
\end{equation}\p

We note that as stable orbits in the adjoint group $\bG_{ad}$ are the same as in the original group, (\ref{maineq0}) and (\ref{maineq}) can be passed to $\bG_{ad}$. We thus may and shall assume $\bG$ is adjoint. Let $\{e_1,e_2,...\}$ be a set of representatives for the set of nilpotent orbits in $\Lg$. We assume that $e_1,e_2,...$ are ordered so that their orbits have decreasing dimensions. As $\bG$ is assumed to be adjoint, there is only one regular nilpotent orbit $e_1$.\p

By Theorem \ref{De1} of DeBacker, for each $e_i$ there exists some point $x_i\in\CA_{\bS}$ and $\bar{e}_i\in\Lg_{x_i,0}$, such that the orbit of $e_i$ is the (unique) smallest orbit that meets $\bar{e}_i+\Lg_{x_i,>0}$. We assume $x_i$ are chosen so that $x_i$ is in the closure of the alcove containing $x$. Write $d_i=\dim\Ad(\bG_{x_i,0})\bar{e}_i$ the dimension of the $\bG_{x_i,0}$-orbit of $\bar{e}_i$. Let $f_i\in C_c^{\infty}(\Lg)$ be the function that takes the value $q^{d_i/2}$ on $\bar{e}_i+\Lg_{x_i,>0}$ and $0$ elsewhere. By Theorem \ref{De2} (again of DeBacker) with $r=0$ and $\gamma_0$ in the place of $\gamma$, we have Shalika germ expansions
\[\def\arraystretch{1.4}\begin{array}{lll}
I_{\gamma_0}^{st}(1_{\Lg_{x,\ge -1}})&=&\dsp\sum_j\Gamma_{e_j}^{st}(\gamma_0)I_{e_j}(1_{\Lg_{x,\ge -1}}),\text{ and}\\
I_{\gamma_0}^{st}(f_i)&=&\dsp\sum_j\Gamma_{e_j}^{st}(\gamma_0)I_{e_j}(f_i),\;\;i=1,2,...
\end{array}\]
where, just like $I_{\gamma_0}^{st}$, we define $\Gamma_{e_j}^{st}(\gamma_0)$ to be the sum of $\Gamma_{e_j}(\gamma')$ where $\gamma'$ runs over a set of representatives of (the set of) the orbits in the stable orbit of $\gamma_0$. Combining the two equations above gives
\begin{equation}\label{21b}I_{\gamma_0}^{st}(1_{\Lg_{x,\ge -1}})=\matr{I_{e_1}(1_{\Lg_{x,\ge -1}})&\!\!...&\!\!...\;}
\matr{I_{e_1}(f_1)&0&...&0\\I_{e_1}(f_2)&I_{e_2}(f_2)&0&0\\I_{e_1}(f_3)&I_{e_2}(f_3)&I_{e_3}(f_3)&0\\&...&&...}^{-1}
\matr{I_{\gamma_0}^{st}(f_1)\\I_{\gamma_0}^{st}(f_2)\\I_{\gamma_0}^{st}(f_3)\\...}.
\end{equation}\hp

Note that we use $I_{e_j}(f_i)=0$ for $j>i$, as by definition the support of $f_i$ will be disjoint from the smaller orbit (or of equal dimension but distinct) $e_j$. We have
\begin{lemma} Equation (\ref{maineq}) and thus (\ref{maineq0}) follow from the following six equations/estimates:
\begin{align}
\label{A1}I_{e_1}(1_{\Lg_{x,\ge -1}})&=|W|\cdot q^{\frac{1}{2}\dim\Ad(G)e_1}.\\
\label{A2}I_{e_i}(1_{\Lg_{x,\ge -1}})&=O(q^{\frac{1}{2}\dim\Ad(G)e_i}).\\
\label{A3}I_{e_i}(f_i)&=1.\\
\label{A4}I_{e_j}(f_i)&=O(1).\\
\label{A5}I_{\gamma_0}^{st}(f_1)&=1+O(q^{-1/2}).\\
\label{A6}I_{\gamma_0}^{st}(f_i)&=O(1).
\end{align}
\end{lemma}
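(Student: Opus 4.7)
The plan is to read off (\ref{maineq}) directly from the matrix identity (\ref{21b}), using (A1)--(A6) to control each of its three factors; the argument is essentially formal once (A1)--(A6) are in hand.

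First I would analyze the middle matrix, which I will call $M$. Because there are only finitely many nilpotent orbits in $\Lg$, $M$ is a finite lower-triangular matrix. By (A3) each diagonal entry equals $1$, and by (A4) each sub-diagonal entry is $O(1)$. Writing $M = \Id + N$ with $N$ strictly lower triangular and $O(1)$, the nilpotence of $N$ yields the finite expansion $M^{-1} = \sum_{k\ge 0}(-N)^k$; so $M^{-1}$ is again lower triangular, with $1$'s on the diagonal and $O(1)$ sub-diagonal entries.

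Next I would expand the right-hand side of (\ref{21b}) as
\[
\sum_{i\ge j} I_{e_i}(1_{\Lg_{x,\ge -1}})\cdot (M^{-1})_{ij}\cdot I_{\gamma_0}^{st}(f_j),
\]
and isolate the $(i,j)=(1,1)$ summand. Using $(M^{-1})_{11}=1$, (A1) and (A5), this summand equals
\[
|W|\cdot q^{\frac{1}{2}\dim\Ad(G)e_1}\cdot (1+O(q^{-1/2})) \;=\; |W|\cdot q^{\frac{1}{2}\dim\Ad(G)\gamma}\cdot (1+O(q^{-1/2})),
\]
where the last equality uses that the regular nilpotent $e_1$ and the regular semisimple $\gamma$ share the same orbit dimension $\dim\bG-\rank\bG$. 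This is exactly the leading contribution predicted by (\ref{maineq}).

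Finally I would bound the remaining summands, those with $i\ge 2$. For each such $(i,j)$, combining (A2), the $O(1)$ bound on $(M^{-1})_{ij}$, and (A5) or (A6) for the last factor gives $O(q^{\frac{1}{2}\dim\Ad(G)e_i})$. Since $e_i$ is non-regular nilpotent, $\dim\Ad(G)e_i < \dim\Ad(G)\gamma$, and as orbit dimensions are integers (in fact even) we get $\dim\Ad(G)e_i \le \dim\Ad(G)\gamma - 1$; hence every such term is absorbed into the $O(q^{-1/2})$ error relative to $q^{\frac{1}{2}\dim\Ad(G)\gamma}$. Summing the finitely many remaining terms yields (\ref{maineq}), and therefore (\ref{maineq0}). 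Rather than a genuine obstacle, the only point to monitor is that all implicit $O$-constants remain uniform in finite base changes of $k$, which is already built into the conventions fixed at the end of Section~\ref{secnorm} and into the statements of (A1)--(A6).
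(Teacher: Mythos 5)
Your proof is correct and takes essentially the same approach as the paper: both substitute (A1)--(A6) into (\ref{21b}), use triangularity and (A3)/(A4) to control $M^{-1}$, identify the $(1,1)$ contribution as $|W|\cdot q^{\frac{1}{2}\dim\Ad(G)\gamma}(1+O(q^{-1/2}))$ via $\dim\Ad(G)e_1=\dim\Ad(G)\gamma$, and absorb the $i\ge 2$ terms into the error using strict dimension drop for non-regular nilpotent orbits. The paper simply leaves the computation in matrix form rather than expanding the triple sum.
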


\begin{proof} In (\ref{A1}) we note that $\dim\Ad(G)e_1=\dim^G\gamma$ as both $e_1$ and $\gamma$ are regular elements in $\Lg$. Then putting (\ref{A1})$\sim$(\ref{A6}) into (\ref{21b}) gives
\[\begin{array}{ll}
&I_{\gamma_0}^{st}(1_{\Lg_{x,\ge -1}})\\
=& q^{\frac{1}{2}\dim\Ad(G)\gamma}\cdot\matr{|W|&O(q^{-1/2})&O(q^{-1/2})&\!\!...\;}
\matr{1&0&...&0\\O(1)&1&0&0\\O(1)&O(1)&1&0\\&...&&...}^{-1}\matr{1+O(q^{-1/2})\\O(1)\\O(1)\\...}\\
=&q^{\frac{1}{2}\dim\Ad(G)\gamma}\cdot(|W|+O(q^{-1/2}))
\end{array}\]
\vskip-.5cm
\end{proof}\hp

It remains to prove (\ref{A1}) - (\ref{A6}). Among them equations (\ref{A1}), (\ref{A2}) and (\ref{A4}) are about nilpotent orbital integrals and are similar to Proposition \ref{comb}. We leave their proofs to Section \ref{secRR}. In the rest of this section, we prove (\ref{A3}), (\ref{A5}) and (\ref{A6}).\p

\begin{proof}[Proof of (\ref{A3})]
The intersection of $\bar{e}_i+\Lg_{x_i,>0}$ and $\Ad(G)(e_i)$ is a neighborhood of $e_i$. This neighborhood corresponds under a coordinate chart to the image of $\Lg_{x_i,>0}$ in the tangent space $\Lg/Z_{\Lg}(e_i)\cong T_{e_i}(\Ad(G)e_i)$. From (\ref{dual}) and (\ref{meam}) one checks that the image of $\Lg_{x_i,>0}$ has measure $q^{-d_i/2}$. This gives (\ref{A3}).
\end{proof}\hp

\begin{proof}[Proof of (\ref{A6})]
We now prove (\ref{A6}), and later comment on how the proof of (\ref{A5}) follows from carefully inspecting the proof in the case $i=1$. We introduce two auxiliary functions $f_i'$, $f_i^*$ to be compared with $f_i$. We list them together as
\[
\begin{array}{lll}
f_i&:=&\dsp q^{d_i/2}\cdot 1_{\bar{e}_i+\Lg_{x_i,>0}}\\ f_i'&:=&\dsp\frac{q^{d_i/2}}{|\Ad(G_{x_i,0})\bar{e}_i|}\cdot1_{\Ad(G_{x_i,0})\bar{e}_i+\Lg_{x_i,>0}}\\
f_i^*&:=&\dsp q^{-d_i/2}\cdot 1_{(\Ad(\bG_{x_i,0})\bar{e}_i)(k)+\Lg_{x_i,>0}}
\end{array}
\]
The first auxiliary function $f_i$ is an averaged version of $f_i$. That is\hp

\begin{lemma}\label{L12o} We have $I_{\gamma_0}^{st}(f_i)=I_{\gamma_0}^{st}(f_i')$.
\end{lemma}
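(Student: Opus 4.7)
The plan is to realize $f_i'$ as an average of $\Ad(g)$-translates of $f_i$ as $g$ ranges over $G_{x_i,\ge 0}$, then invoke the conjugation-invariance of (stable) orbital integrals. Concretely, set $(\Ad(g)^*f)(X):=f(\Ad(g^{-1})X)$. Because $\Ad(g)$ preserves every $G$-orbit $\Ad(G)\gamma'$ together with the invariant measure specified by the normalization (\ref{meam}), we have $I_{\gamma'}(\Ad(g)^*f)=I_{\gamma'}(f)$ for all $g\in G$ and all regular semisimple $\gamma'$; summing over representatives of the orbits in the stable orbit of $\gamma_0$ gives $I_{\gamma_0}^{st}(\Ad(g)^*f)=I_{\gamma_0}^{st}(f)$, and the same holds for any (finite or Haar) average of such translates.

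Next I would compute the average explicitly. For $g\in G_{x_i,\ge 0}$ the automorphism $\Ad(g)$ stabilizes $\Lg_{x_i,>0}$, so $\Ad(g)(\bar e_i+\Lg_{x_i,>0})=\Ad(g)\bar e_i+\Lg_{x_i,>0}$, and modulo $\Lg_{x_i,>0}$ the element $\Ad(g)\bar e_i$ coincides with $\Ad(\bar g)\bar e_i$, where $\bar g$ denotes the image of $g$ in $G_{x_i,0}=\bG_{x_i,0}(k)$. Hence $\Ad(g)^*f_i=q^{d_i/2}\cdot 1_{\Ad(\bar g)\bar e_i+\Lg_{x_i,>0}}$ depends only on $\bar g$, and the Haar average over $G_{x_i,\ge 0}$ collapses to the uniform average over the finite quotient $G_{x_i,0}$. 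For any $X\in\Lg_{x_i,\ge 0}$ with image $\bar X\in\Lg_{x_i,0}$, the number of $\bar g\in G_{x_i,0}$ satisfying $\Ad(\bar g)\bar e_i=\bar X$ equals $|Z_{G_{x_i,0}}(\bar e_i)|$ when $\bar X\in\Ad(G_{x_i,0})\bar e_i$ and zero otherwise, so the average is
\[
\frac{q^{d_i/2}\,|Z_{G_{x_i,0}}(\bar e_i)|}{|G_{x_i,0}|}\cdot 1_{\Ad(G_{x_i,0})\bar e_i+\Lg_{x_i,>0}}=\frac{q^{d_i/2}}{|\Ad(G_{x_i,0})\bar e_i|}\cdot 1_{\Ad(G_{x_i,0})\bar e_i+\Lg_{x_i,>0}}=f_i'.
\]
Applying $I_{\gamma_0}^{st}$ and using the invariance from the first paragraph gives the claimed equality.

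There is essentially no obstacle; the only points needing care are that the Haar average over the pro-$p$ kernel $G_{x_i,>0}$ is harmless (the integrand is constant on each coset of $G_{x_i,>0}$, so the integral reduces to a sum over $G_{x_i,0}$), and that the lift of $\bar e_i\in\Lg_{x_i,0}$ to an element of $\Lg_{x_i,\ge 0}$ used to form $\bar e_i+\Lg_{x_i,>0}$ is immaterial because any two lifts differ by an element of $\Lg_{x_i,>0}$, which is absorbed in the translate.
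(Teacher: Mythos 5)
Your proposal is correct and follows essentially the same approach as the paper: write $f_i'$ as an average of $\Ad(g)$-translates of $f_i$ over $G_{x_i,\ge 0}$ (equivalently, over lifts of $G_{x_i,0}$) and invoke conjugation-invariance of stable orbital integrals. Your version is simply more explicit about why the average collapses to the finite group $G_{x_i,0}$ and why the resulting function equals $f_i'$, whereas the paper states the identity $f_i'=\frac{1}{N}\sum_j \Ad(g_j)f_i$ without further comment.
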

\begin{proof} For $g\in G$, $f\in C_c^{\infty}(\Lg)$ write $\Ad(g)(f)$ the function with $(\Ad(g)(f))(x)=f(\Ad(g^{-1})x)$. We have $I_{\gamma_0}(\Ad(g)f)=I_{\Ad(g)^{-1}\gamma_0}(f)=I_{\gamma_0}(f)$ and thus $I_{\gamma_0}^{st}(\Ad(g)f)=I_{\gamma_0}^{st}(f)$. Let $g_1,...,g_N$ be set of lifts of $G_{x_i,0}$ in $G_{x_i,\ge 0}$, where $N=|G_{x_i,0}|$. The lemma then follows from that $f_i'=\frac{1}{N}\sum_{j=1}^N\Ad(g_j)f_i$.
\end{proof}\hp

\begin{lemma}\label{L12a} We have
$I_{\gamma_0}^{st}(f_i')/I_{\gamma_0}^{st}(f_i^*)=O(1)$.
\end{lemma}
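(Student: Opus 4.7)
The plan is to establish a pointwise bound $f_i'(X) \le C\cdot f_i^*(X)$ for all $X\in\Lg$, where $C$ depends on $\bG$ but not on $q$. Once this is in hand, integrating against the non-negative $G$-invariant orbital measure on each $\Ad(G)\gamma_0'$ and summing over representatives $\gamma_0'$ of the finitely many $G$-orbits in the stable class of $\gamma_0$ will give $I_{\gamma_0}^{st}(f_i') \le C\cdot I_{\gamma_0}^{st}(f_i^*)$, which is exactly the desired bound on the ratio.

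To set up the pointwise comparison, I will exploit that both $f_i'$ and $f_i^*$ are supported in $\Lg_{x_i,\ge 0}$ and are translation-invariant by $\Lg_{x_i,>0}$, so that they descend to functions on the reductive quotient $\Lg_{x_i,0}$. Under this reduction, $f_i'$ becomes the constant $q^{d_i/2}/|\Ad(G_{x_i,0})\bar e_i|$ on the single $k$-rational orbit $\Ad(G_{x_i,0})\bar e_i$, whereas $f_i^*$ becomes the constant $q^{-d_i/2}$ on the (possibly strictly larger) set $(\Ad(\bG_{x_i,0})\bar e_i)(k)$ of all $k$-points of the geometric orbit. Since $f_i^*\ge 0$ and $\text{supp}(f_i^*)\supset\text{supp}(f_i')$, the pointwise comparison reduces to bounding the ratio
\[\frac{f_i'}{f_i^*}=\frac{q^{d_i}}{|\Ad(G_{x_i,0})\bar e_i|}\]
on the common support.

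The bound on this ratio will follow from Lang's theorem: writing $Z:=Z_{\bG_{x_i,0}}(\bar e_i)$ and using orbit-stabilizer together with Lang applied to the connected reductive $\bG_{x_i,0}$ and to the connected component $Z^0$ (so $H^1(k,Z^0)=0$), I will obtain
\[\frac{q^{d_i}}{|\Ad(G_{x_i,0})\bar e_i|}=|\pi_0(Z)^{\Frob}|\cdot(1+O(q^{-1/2})).\]
The main (and really only) delicate point is verifying that this component-group factor is $O(1)$ uniformly as the finite extension $k'/k$ varies: one needs that as $\bar e_i$ ranges over the nilpotent orbits appearing in the finitely many possible reductive quotients $\bG_{x_i,0}$, the orders $|\pi_0(Z_{\bG_{x_i,0}}(\bar e_i))|$ are bounded by a constant depending only on $\bG$. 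This is standard since nilpotent orbits in a fixed reductive group form a finite set with uniformly bounded centralizer component groups. With this uniform bound, we obtain $f_i'\le O(1)\cdot f_i^*$ pointwise, and the lemma follows by the integration argument of the first paragraph.
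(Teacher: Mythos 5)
Your proposal is correct and follows essentially the same route as the paper: both compare supports, observe $\operatorname{supp}(f_i')\subset\operatorname{supp}(f_i^*)$, and reduce to bounding the pointwise ratio $q^{d_i}/|\Ad(G_{x_i,0})\bar e_i|$, which by orbit--stabilizer and Lang's theorem equals $|(\pi_0(Z_{\bG_{x_i,0}}(\bar e_i)))(k)|+O(q^{-1/2})$. You spell out the Lang's-theorem step and the uniform boundedness of the component group somewhat more explicitly than the paper, which simply records the estimate (\ref{L12b}), but the underlying argument is the same.
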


\begin{proof}
Note that the support of $f_i^*$ contains that of $f_i'$; $\Ad(G_{x_i,0})\bar{e}_i=\Ad((\bG_{x_i,0})(k))\bar{e}_i\subset(\Ad(\bG_{x_i,0})\bar{e}_i)(k)$. Hence to prove the lemma, it suffices to bound the quotient of their values, which is
\begin{equation}\label{L12b}
\frac{q^{d_i}}{|\Ad(G_{x_i,0})\bar{e}_i|}=\frac{q^{\dim\bG_{x_i,0}}}{|G_{x_i,0}|}\cdot\frac{|Z_{G_{x_i,0}}(\bar{e}_i)|}{q^{\dim Z_{\bG_{x_i,0}}(\bar{e}_i)}}=|(\pi_0(Z_{\bG_{x_i,0}}(\bar{e}_i)))(k)|+O(q^{-1/2}).
\end{equation}
\vskip-.8cm
\end{proof}\hp

With the two lemmas, (\ref{A6}) follows from the statement that $I_{\gamma_0}^{st}(f_i^*)=O(1)$. 
Now $I_{\gamma_0}^{st}(f_i^*)$ can be computed using the cohomology of the generalized affine Springer fiber $\CX_{x_i,\bar{e}_i,\gamma_0}\subset\CX_{x_i,\gamma_0}$ introduced in (\ref{ASF3}) (resp. (\ref{ASF2})) with $y$ (resp. $\bar{e}$ and $\gamma$) replaced by $x_i$ (resp. $\bar{e}_i$ and $\gamma_0$). That is, $f_i^*$ is equal to $q^{-d_i/2}$ times $1_{\Lg_{y,\ge 0}^{(\bar{e}_i)}}$ where the latter appears in the LHS of (\ref{GKM3}) and (\ref{GKMe3}). As we assumed that $x_i$ lies in the closure of the alcove containing $x$, the Iwahori affine Springer fiber $\CX_{\gamma_0}=\CX_{x,\gamma_0}$ has a natural map (\ref{mapSpr}) to $\CX_{x_i,\gamma_0}$. A geometric fiber of this map above $\CX_{x_i,\bar{e}_i,\gamma_0}$ has dimension (see (\ref{Sprdim})) $(\dim Z_{\bG_{x_i,0}}(\bar{e_i})-\rank\bG)/2$. The dimension of the Iwahori affine Springer fiber is $(v(D(\gamma_0))-\dim\bG_{x,0}+\dim\bT_0)/2$, and therefore
\begin{equation}\label{a6}
\dim\CX_{x_i,\bar{e}_i,\gamma_0}\le(v(D(\gamma_0))-\dim Z_{\bG_{x_i,0}}(\bar{e_i})+\dim\bT_0)/2.
\end{equation}
On the other hand, (\ref{GKMe3}) gives
\begin{equation}\label{a6b}
I_{\gamma_0}^{st}(f_i^*)=C_k\cdot q^{\dim\CX_{x_i,\bar{e}_i,\gamma_0}+(-v(D(\gamma_0))+\dim\bG_{x_i,0}-\dim\bT_0-d_i)/2}\cdot (1+O(q^{-1/2})).
\end{equation}
Since $\dim\bG_{x_i,0}=d_i+\dim Z_{\bG_{x_i,0}}(\bar{e_i})$, combining (\ref{a6}) and (\ref{a6b}) gives $I_{\gamma_0}^{st}(f_i^*)=C_k'+O(q^{-1/2})$, where $C_k'$ is the number of $\CT$-orbits of components of $\CX_{x_i,\bar{e}_i,\gamma_0}\times\Spec\bar{k}$, of dimension equal to the RHS of (\ref{a6}), that are stabilized by $\Gal(\bar{k}/k)$. In particular $I_{\gamma_0}^{st}(f_i^*)=O(1)$ as asserted and (\ref{A6}) follows by Lemma \ref{L12o} and \ref{L12a}.
\end{proof}\hp

\begin{proof}[Proof of (\ref{A5})]
To establish (\ref{A5}) we would like the above $C_k$ to be equal to $1$ when $i=1$, and $|(\pi_0(Z_{\bG_{x_1,0}}(\bar{e}_1)))(k)|=1$ in (\ref{L12b}). For the former, we note that $\CX_{x_1,\bar{e}_1,\gamma_0}$ is (by definition) the regular locus denoted $O$ in \cite{Be96}. Part (a) of the main proposition in {\it ibid.} asserts that the locus $\CT$ acts transitively on $O$, and part (b) asserts that $O$ has the expected dimension as the RHS of (\ref{a6}), thus $C_k=1$. Next, when $i=1$ we may take $x_1=o$ the hyperspecial vertex and $\bar{e}_1\in\Lg_{o,0}$ a regular nilpotent element. Since $\bG_{o,0}\cong\bG$ is adjoint and $\bar{e}_1$ is the unique regular orbit, $Z_{\bG_{x_1,0}}(\bar{e}_1)$ is irreducible and therefore $|(\pi_0(Z_{\bG_{x_1,0}}(\bar{e}_1)))(k)|=1$.
\end{proof}\hp

\begin{remark} One philosophy underlying the proof is that the number of $\CT$-orbits of the components for $\gamma$ is independent of $\gamma$ as long as $\gamma$ is regular semisimple and $\depth(\gamma)>1$. When $\bT=Z_{\bG}(\gamma)$ is split over $F$, the statement that the number of $\CT$-orbits of components equals to $|W|$ can also be deduced from the method of \cite[Sec. 5]{KL88}. This is in fact how we first realized that $|W|$ is the correct number.
\end{remark}\p

\section{Implementation of Ranga Rao method}\label{secRR}

In \cite{Ra72}, Ranga Rao gave a method for computing nilpotent orbital integrals in order to prove their convergences for compactly supported smooth functions. We review his method here with our normalization. Let $\bS\subset\bG$ be a fixed maximal $k$-split torus as before and write $S=\bS(F)\subset G$. Suppose $e\in\Lg$ is nilpotent, and $\lambda:\mathbb{G}_m\ra\bG$ is a cocharacter associated to $e$ by the Jacobson-Morozov Theorem, so that $\Ad(\lambda(t)e)=t^2e$. By conjugation on $e$ and $\lambda$ we shall assume $\lambda$ has image in $\bS$. We denote by ${}^{\lambda}_i\Lg\subset\Lg$ the subspace on which $\lambda$ acts by $z\mapsto z^i$, so we have $\Lg=\bigoplus{}^{\lambda}_i\Lg$ and $e\in{}^{\lambda}_2\Lg$. Since $\lambda$ has image in $\bS$, this weight space decomposition respect the Moy-Prasad grading (\ref{MPgrading}) and we have for any $y\in\CA_{\bS}$
\[\Lg=\bigoplus_r\bigoplus_i{}^{\lambda}_i\Lg_{y,r},\;{}^{\lambda}_i\Lg_{y,r}:={}^{\lambda}_i\Lg\cap\Lg_{y,r}.\]
We will write ${}^{\lambda}_{\ge j}\Lg:=\bigoplus_{i\ge j}{}^{\lambda}_i\Lg$, ${}^{\lambda}_{<j}\Lg_{y,r}:=\bigoplus_{i<j}{}^{\lambda}_i\Lg_{y,r}$, etc. On the group level we also write $P={}^{\lambda}_{\ge0}G\subset G$ the parabolic subgroup whose Lie algebra is $\Lie P={}^{\lambda}_{\ge 0}\Lg$. Similar to the Lie algebra case we put ${}^{\lambda}_{\ge0}G_{y,\ge r}:={}^{\lambda}_{\ge0}G\cap G_{y,\ge r}$. Note that ${}^{\lambda}_{\ge0}G_{y,0}={}^{\lambda}_{\ge0}G_{y,\ge0}/{}^{\lambda}_{\ge0}G_{y,>0}$ is the parabolic subgroup of $G_{y,0}$ associated to $\lambda$ (well-defined since $\lambda$ has image in $\bS\subset\bG_{y,0}$). We have $Z_G(e)\subset P$.\p

As explained in the construction of $\bG_{y,0}$ in Section \ref{secBT}. The torus $\bS$ over $k$ is canonically a maximal torus of $\bG_{y,0}$, and we shall realize $\lambda$ also as a cocharacter into $\bG_{y,0}$. For example, we have the corresponding parabolic subgroup ${}^{\lambda}_{\ge 0}\bG_{y,0}\subset\bG_{y,0}$ of $\bG_{y,0}$ and on the level of $k$-points ${}^{\lambda}_{\ge 0}G_{y,0}\subset G_{y,0}$. The Weyl group $W_y$ of $\bG_{y,0}$ and the Weyl group $W_P$ of the Levi of $P$ are subgroups of the Weyl group $W$ of $\bG$. Let $\{w_{\alpha}\}$ be a set of (lifts of) representatives for $W_y\bsl W/W_P$ in $N_G(S)$. We have the Iwasawa decomposition $G=\sqcup_{\alpha}G_{y,\ge0}w_{\alpha}P$. For any function $f\in C_c^{\infty}(\Lg)$, after averaging we assume $f$ is $G_{y,\ge0}$-conjugation invariant. The orbital integral $I_e(f)$ then can be expressed as a sum of integrals of $f$ on the $\Ad(w_{\alpha})P$-orbits of $\Ad(w_{\alpha})(e)$ for each $\alpha$. More precisely, denote by $\mu_e$ the measure on $\Ad(G)e$ that defines $I_e(-)$ and let $dg$ be the pullback of $\mu_e$ under $\Ad(g):G/Z_G(e)\xra{\sim}\Ad(G)e$. We may write
\[I_e(f)=\int_{G/Z_G(e)}f(\Ad(g)e)dg=\sum_{\alpha}\int_{G_{y,\ge 0}w_{\alpha}P/Z_G(e)}f(\Ad(g)e)dg.\]
Since $f$ is $G_{y,\ge0}$-conjugation invariant, for each index $\alpha$ there exists a left $P$-invariant measure $d_{\alpha}p$ (which plays the role of $dp^*$ in \cite[Lemma 3]{Ra72}) on $P/Z_G(e)$ such that
\begin{equation}\label{pushmea}
\int_{P/Z_G(e)}f(\Ad(w_{\alpha}p)e)d_{\alpha}p=\int_{G_{y,\ge 0}w_{\alpha}P/Z_G(e)}f(\Ad(g)e)dg.
\end{equation}
And $d_{\alpha}p$ is defined by this property for all $G_{y,\ge0}$-conjugation invariant $f$. The natural isomorphism $P/Z_G(e)\cong\Ad(P)(e)$ then allows us to push the measure $d_{\alpha}p$ to $\Ad(P)(e)$, which we denote by $\mu_{\alpha}$. We have
\begin{equation}\label{RR2}
I_e(f)=\sum_{\alpha}I_e(f)_{\alpha}\text{, where }I_e(f)_{\alpha}:=\int_{X\in\Ad(P)e}f(\Ad(w_{\alpha})X)\mu_{\alpha}.
\end{equation}

The main problem is to compute $\mu_{\alpha}$. We will deal with each $\alpha$ separately. Recall that $w_{\alpha}$ normalizes $S$ and note $\Ad(w_{\alpha})\Ad(P)e=\Ad(\Ad(w_{\alpha})P)(\Ad(w_{\alpha})e)$. By changing $e$ with $\Ad(w_{\alpha})e$ and $P$ with $\Ad(w_{\alpha})P$, without loss of generality we may and shall now assume $w_{\alpha}=1$. We note that $\Ad(P)(e)\subset{}^{\lambda}_{\ge 2}\Lg$ is open (in the $F$-analytic topology, \cite[4.14]{SS70}). For an open subgroup $K'\subset G$ such that $K'\cap P\supset G_{y,\ge0}\cap P$, by applying a function $f$ that takes $1$ on $\Ad(K')e$ and $0$ on $\Ad(G)(e)$ outside $\Ad(K')(e)$ to (\ref{pushmea}), we have
\[\mu_{\alpha}(\Ad(K'\cap P)e)=\mu_e(\Ad(K')e).\]
Since $\mu_{\alpha}$ (resp. $\mu_e$) is $P$-invariant (resp. $G$-invariant), for any open compact subgroup $K\subset G$ we deduce by comparing with $K'$ and $K'\cap K$ that
\begin{equation}\label{pushmea2}
\mu_{\alpha}(\Ad(K\cap P)e)=\frac{[G_{y,\ge0}:K]}{[G_{y,\ge0}\cap P:K\cap P]}\mu_e(\Ad(K)e).
\end{equation}
Here we use the notation $[\Lambda':\Lambda'']:=[\Lambda:\Lambda'']/[\Lambda:\Lambda']$ as relative index whenever $\Lambda',\Lambda''\subset\Lambda$ are finite index subgroups.\p

Let $\mu_{even}$ be the Haar measure on ${}^{\lambda}_{\ge2}\Lg$ that satisfies
\begin{equation}\label{even}
\mu_{even}({}^{\lambda}_{\ge2}\Lg_{y,>0})=\frac{|G_{y,0}|}{|{}^{\lambda}_{<0}\Lg_{y,0}|\cdot|{}^{\lambda}_{\ge0}G_{y,0}|}.
\end{equation}
Let $\mu_{odd}$ be the function on ${}^{\lambda}_{\ge2}\Lg$, which assigns to an element in $X+{}^{\lambda}_{>2}\Lg$, $X\in{}^{\lambda}_{2}\Lg$ the value
\[\left([\ad(X)({}^{\lambda}_{-1}\Lg_{y,\ge0}):{}^{\lambda}_{1}\Lg_{y,>0}]\right)^{1/2}.\]
The measure $\mu_{even}$ (resp. the function $\mu_{odd}$) is up to constant what is denoted $dXdZ$ (resp. the pullback of $\varphi(X)$ from ${}^{\lambda}_{2}\Lg$ to ${}^{\lambda}_{\ge2}\Lg$) in \cite[Theorem 1]{Ra72}.\p

\begin{lemma}\label{ted} Assume without loss of generality (see above) that $w_{\alpha}=1$. Then $\mu_{\alpha}$ is the restriction of $\mu_{odd}\cdot\mu_{even}$ to $\Ad(P)(e)$.
\end{lemma}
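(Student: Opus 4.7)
The plan is to verify the identity by evaluating both measures explicitly on a single small open ball via the defining formula (\ref{pushmea2}), and then extending to all of $\Ad(P)e$ by equivariance. This is essentially a reformulation of Ranga Rao's computation in \cite[Theorem 1]{Ra72} adapted to our Moy--Prasad normalizations.

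First I would take $K = G_{y,\ge m}$ with $m$ sufficiently large. For such $K$, (\ref{tangent}) identifies $\Lg_{y,\ge m}$ with $G_{y,\ge m}$, and the map $X\mapsto\Ad(\exp X)e$ becomes an $F$-analytic isomorphism from $\Lg_{y,\ge m}/(\Lg_{y,\ge m}\cap Z_\Lg(e))$ onto $\Ad(K)e$, and similarly from ${}^{\lambda}_{\ge 0}\Lg_{y,\ge m}/({}^{\lambda}_{\ge 0}\Lg_{y,\ge m}\cap Z_\Lg(e))$ onto $\Ad(K\cap P)e$. Via (\ref{tangent2}) and the definition in Section \ref{secnorm}, $\mu_e(\Ad(K)e)$ becomes the volume $m_e(\bar L_m)$ of the image $\bar L_m$ of $\Lg_{y,\ge m}$ in $\Lg/Z_\Lg(e)$, which by (\ref{dual}) and (\ref{meam}) reduces to a self-dual lattice computation for the form $B(-,[-,e])$.

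Second, I would decompose both sides along the $\lambda$-grading. The form $B(-,[-,e])$ pairs ${}^{\lambda}_i\Lg$ with ${}^{\lambda}_{-i-2}\Lg$, so the self-dual lattice computation splits into pairs of opposite weights $(i,-i-2)$ for $i\neq -1$ together with the self-paired weight $i=-1$, the latter giving rise to $\mu_{odd}$. In parallel, the index ratio $[G_{y,\ge 0}:K]/[G_{y,\ge 0}\cap P:K\cap P]$ equals $[{}^{\lambda}_{<0}G_{y,\ge 0}:{}^{\lambda}_{<0}G_{y,\ge m}]$ via the factorization $G_{y,\ge 0}=({}^{\lambda}_{<0}G_{y,\ge 0})\cdot({}^{\lambda}_0 G_{y,\ge 0})\cdot({}^{\lambda}_{>0}G_{y,\ge 0})$. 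By (\ref{MP}) this index expands as $|{}^{\lambda}_{<0}G_{y,0}|$ multiplied by $q$-powers corresponding to the Moy--Prasad jumps in $\bigoplus_{i<0}{}^{\lambda}_i\Lg$ between depths $>0$ and $\ge m$. Combining yields an explicit closed-form value for $\mu_\alpha(\Ad(K\cap P)e)$.

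Third, I would directly compute $(\mu_{odd}\cdot\mu_{even})(\Ad(K\cap P)e)$. Both factors are locally constant on this ball ($\mu_{odd}$ because it depends only on the ${}^{\lambda}_2$-projection of $X$), so the integral equals $\mu_{odd}(e)\cdot\mu_{even}$ applied to the image of $[{}^{\lambda}_{\ge 0}\Lg_{y,\ge m},e]$. Unfolding (\ref{even}) and the definition of $\mu_{odd}$, this matches the second step weight-by-weight: the prefactor $|G_{y,0}|/(|{}^{\lambda}_{<0}\Lg_{y,0}|\cdot|{}^{\lambda}_{\ge 0}G_{y,0}|)$ in $\mu_{even}$ absorbs the $|G_{y,\ge 0}|/|{}^{\lambda}_{\ge 0}G_{y,\ge 0}|$ factor from the index ratio, while $\mu_{odd}$ accounts precisely for the relative index of $\ad(e)({}^{\lambda}_{-1}\Lg_{y,\ge 0})$ inside ${}^{\lambda}_1\Lg_{y,>0}$. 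To extend from one ball to all of $\Ad(P)e$, I would verify that $(\mu_{odd}\cdot\mu_{even})|_{\Ad(P)e}$ is $\Ad(P)$-equivariant: the unipotent radical $U={}^{\lambda}_{>0}G$ acts trivially on $\mu_{even}$ (translation-invariance) and on $\mu_{odd}$ (which only sees the ${}^{\lambda}_2$-projection), while for the Levi $L={}^{\lambda}_0 G$ the Jacobians in $\mu_{even}$ and $\mu_{odd}$ transform reciprocally and cancel.

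The main obstacle I anticipate is the bookkeeping at the odd weight $i=\pm1$: $\ad(e)$ maps ${}^{\lambda}_{-1}\Lg_{y,\ge 0}$ only onto a finite-index sublattice of ${}^{\lambda}_1\Lg_{y,>0}$, and tracking this relative index across the various measure normalizations (and verifying that it transforms correctly under $L$) is where the core of the identification, and of Ranga Rao's original computation, lies.
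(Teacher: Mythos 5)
Your proposal follows the same two-step skeleton as the paper's proof: reduce to verifying the equality of the two measures on a single small compact open piece $\Ad(K\cap P)e$, because they are known to differ by a constant, and then do that single verification via the lattice-duality normalization of $m_e$ from (\ref{meam}). Your choice $K=G_{y,\ge m}$ with $m\gg 0$ and your use of (\ref{tangent}), (\ref{tangent2}) to turn $\mu_e(\Ad(K)e)$ into the volume $m_e$ of a lattice is exactly where the paper ultimately lands (in its passage from (\ref{ted2*}) to (\ref{ted3*})), so your verification step is essentially identical once the $\lambda$-grading bookkeeping is carried out.

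Where you genuinely diverge is in how the ``differ by a constant'' fact is handled. The paper simply cites \cite[Thm.~1]{Ra72} for this, while you propose to reprove it by showing directly that $\mu_{odd}\cdot\mu_{even}$ restricted to $\Ad(P)e$ is a $P$-invariant measure (unipotent radical preserves both factors; the Levi Jacobians cancel between $\mu_{odd}$ and $\mu_{even}$). This is indeed the content of the argument leading to Ranga Rao's Theorem 1, and a footnote in the paper explicitly notes that one could redo this computation in the present setting. So your route is legitimate and self-contained, but it costs you the core of Ranga Rao's proof (the Pfaffian/symplectic-form computation that makes the Levi Jacobians cancel), which you correctly flag as your anticipated obstacle but do not carry out. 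The paper's route avoids that cost at the price of an explicit reference, and it also makes the single-point verification more legible by first proving it under four simplifying assumptions (iii)--(iv) and then showing the corrections cancel; you skip that pedagogical scaffolding and go directly to the general $K=G_{y,\ge m}$, which works but asks more of the reader in one shot. One small imprecision: you describe $\ad(e)({}^{\lambda}_{-1}\Lg_{y,\ge 0})$ as a finite-index sublattice of ${}^{\lambda}_1\Lg_{y,>0}$, but in general it lands in ${}^{\lambda}_1\Lg_{y,\ge 0}$ and need not be contained in ${}^{\lambda}_1\Lg_{y,>0}$; the index in the definition of $\mu_{odd}$ is the generalized relative index $[\Lambda':\Lambda'']=[\Lambda:\Lambda'']/[\Lambda:\Lambda']$ introduced after (\ref{pushmea2}), not an honest subgroup index. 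This doesn't affect your strategy but is worth keeping straight when you actually do the odd-weight bookkeeping.
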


\begin{proof} By \cite[Thm. 1]{Ra72}, $\mu_{\alpha}$ differs from $\mu_{odd}\cdot\mu_{even}$ by a constant. Hence it suffices\footnote{In fact, one can also do the computation here in a more general setting and reproduce the proof of \cite[Thm. 1]{Ra72}.} to verify that they agree somewhere. Applying (\ref{pushmea2}) with $K=G_{y,>0}\subset G_{y,\ge 0}$ gives
\begin{equation}\label{L15}
\mu_{\alpha}(\Ad({}^{\lambda}_{\ge 0}G_{y,>0})e)=\frac{[G_{y,\ge0}:G_{y,>0}]}{[{}^{\lambda}_{\ge 0}G_{y,\ge0}:{}^{\lambda}_{\ge 0}G_{y,>0}]}\cdot\mu_e(\Ad(G_{y,>0})e)
\end{equation}
\[=[G_{y,\ge0}:G_{y,>0}\cdot{}^{\lambda}_{\ge 0}G_{y,\ge0}]\cdot\mu_e(\Ad(G_{y,>0})e)=[G_{y,0}:{}^{\lambda}_{\ge 0}G_{y,0}]\cdot\mu_e(\Ad(G_{y,>0})e).\]\hp

Note that $\mu_{odd}$ is constant on $\Ad({}^{\lambda}_{\ge 0}G_{y,>0})e$ since ${}^{\lambda}_{\ge 0}G_{y,>0}\subset G_{y,>0}$ is compact and $P$ (which acts through its Levi quotient $M\cong {}^{\lambda}_0G:=Z_G(\lambda)$) acts on $\mu_{odd}$ through a positive-valued quasi-character \cite[Lemma 2]{Ra72}. Thus to prove $\mu_{odd}\cdot\mu_{even}=\mu_{\alpha}$ on $\Ad(P)(e)$, it suffices to show
\begin{equation}\label{ted2}
\mu_{odd}(e)\mu_{even}(\Ad({}^{\lambda}_{\ge 0}G_{y,>0})e)=[G_{y,0}:{}^{\lambda}_{\ge 0}G_{y,0}]\cdot\mu_e(\Ad(G_{y,>0})e).
\end{equation}\hp

To emphasize the idea rather the heavy computation, let us first prove \ref{ted2} under simplifying assumptions. We assume
\renewcommand{\theenumi}{\roman{enumi}}
\begin{enumerate}
\item $\mu_{even}(\Ad({}^{\lambda}_{\ge 0}G_{y,>0})e)=\mu_{even}(\ad({}^{\lambda}_{\ge 0}\Lg_{y,>0})e)$.
\item $\mu_e(\Ad(G_{y,>0})e)=m_e(\mathrm{im}(\Lg_{y,>0}\ra\Lg/Z_{\Lg}(e)))$, where $m_e$ is the measure on $\Lg/Z_{\Lg}(e)$ given by (\ref{meam}), and the map is a restriction of the natural quotient map $\Lg\ra\Lg/Z_{\Lg}(e)$.
\item $\ad(e)({}^{\lambda}_{\ge 0}\Lg_{y,>0})={}^{\lambda}_{\ge 2}\Lg_{y,\ge0}$.
\item $\ad(e)({}^{\lambda}_{-1}\Lg_{y,>0})={}^{\lambda}_{1}\Lg_{y,\ge0}$.
\end{enumerate}\hp
Firstly, we rewrite (\ref{ted2}) using (i) and (ii) as
\begin{equation}\label{ted2*}
\mu_{odd}(e)\mu_{even}(\ad({}^{\lambda}_{\ge 0}\Lg_{y,>0})e)=[G_{y,0}:{}^{\lambda}_{\ge 0}G_{y,0}]\cdot m_e(\mathrm{im}(\Lg_{y,>0}\ra\Lg/Z_{\Lg}(e))).
\end{equation}
Next we prove (\ref{ted2*}) assuming (iii) and (iv). By (iii) we have
\[\mu_{odd}(e)=\left([\ad(e)({}^{\lambda}_{-1}\Lg_{y,\ge0}):{}^{\lambda}_{1}\Lg_{y,>0}]\right)^{1/2}=\left([\ad(e)({}^{\lambda}_{-1}\Lg_{y,>0}):{}^{\lambda}_{1}\Lg_{y,\ge0}]\right)^{1/2}\cdot|{}^{\lambda}_1\Lg_{y,0}|=|{}^{\lambda}_1\Lg_{y,0}|.\]
And by (iv)
\[
\mu_{even}(\ad({}^{\lambda}_{\ge 0}\Lg_{y,>0})e)=\frac{[\ad({}^{\lambda}_{\ge 0}\Lg_{y,>0})e:{}^{\lambda}_{\ge2}\Lg_{y,>0}] \cdot|G_{y,0}|}{|{}^{\lambda}_{<0}\Lg_{y,0}|\cdot|{}^{\lambda}_{\ge0}G_{y,0}|}
\]
\[
=\frac{[\ad(e){}^{\lambda}_{\ge 0}\Lg_{y,>0}:{}^{\lambda}_{\ge2}\Lg_{y,>0}] }{|{}^{\lambda}_{>0}\Lg_{y,0}|}\cdot\frac{|G_{y,0}|}{|{}^{\lambda}_{\ge0}G_{y,0}|}=\frac{[\ad(e){}^{\lambda}_{\ge 0}\Lg_{y,>0}:{}^{\lambda}_{\ge2}\Lg_{y,\ge0}] }{|{}^{\lambda}_1\Lg_{y,0}|}\cdot\frac{|G_{y,0}|}{|{}^{\lambda}_{\ge0}G_{y,0}|}=\frac{|G_{y,0}|}{|{}^{\lambda}_1\Lg_{y,0}|\cdot|{}^{\lambda}_{\ge0}G_{y,0}|}.
\]
This shows $\mu_{odd}(e)\mu_{even}(\Ad({}^{\lambda}_{\ge 0}G_{y,>0})e)=|G_{y,0}|/|{}^{\lambda}_{\ge0}G_{y,0}|=[G_{y,0}:{}^{\lambda}_{\ge0}G_{y,0}]$.
On the other hand, (iii) and (iv) of the above and (\ref{dual}) imply that $L:=\mathrm{im}(\Lg_{y,>0}\ra\Lg/Z_{\Lg}(e))$ is self-dual, i.e. $L^*=L$ where $L^*$ is as in (\ref{meam}). In particular $m_e(L)=1$. By (ii) this gives $\mu_e(\Ad(G_{y,>0})e)=1$. Thus \ref{ted2*} is proved under the assumptions.\p

Let us now prove (\ref{ted2*}) without assuming (iii) and (iv). The idea is that we had used (iii) and (iv) twice, and their effects actually cancel. More precisely, suppose instead of assuming (iii) and (iv), we have $C_1=[\ad(e){}^{\lambda}_{\ge 0}\Lg_{y,>0}:{}^{\lambda}_{\ge2}\Lg_{y,\ge0}]$ and $C_2=[\ad(e)({}^{\lambda}_{-1}\Lg_{y,>0}):{}^{\lambda}_{1}\Lg_{y,\ge0}]$. Then $\mu_{odd}(e)$ is multiplied by $C_2^{1/2}$ and $\mu_{even}(\Ad({}^{\lambda}_{\ge 0}G_{y,>0})e)$ is multiplied by $C_1$. On the other hand, since 
\[\ad(e)=\ad(e)|_{{}^\lambda_{\ge 0}\Lg}\oplus\ad(e)|_{{}^\lambda_{-1}\Lg}\oplus\ad(e)|_{{}^\lambda_{\le -2}\Lg}\]
and that $\ad(e)|_{{}^\lambda_{\ge 0}\Lg}:{}^\lambda_{\ge 0}\Lg\ra{}^\lambda_{\ge 2}\Lg$ is dual to $-\ad(e)|_{{}^\lambda_{\le -2}\Lg}:{}^\lambda_{\le -2}\Lg\ra{}^\lambda_{\le 0}\Lg$, we have
$[L^*:L]=C_1^{-2}C_2^{-1}$. Hence $m_e(L)=C_1C_2^{1/2}$, and thus both sides of (\ref{ted2*}) are (to be) multiplied by $C_1C_2^{1/2}$ and our argument worked without assumptions (iii) and (iv).\p

Recall that assumptions (i) and (ii) was used to rewrite the needed (\ref{ted2}) into (\ref{ted2*}). To get rid of the assumption, instead of having $K=G_{y,>0}$ in (\ref{L15}), we will apply (\ref{pushmea2}) with $K_r=G_{y,\ge r}$ with $r\gg 0$. Parallel to (\ref{ted2}) we have that Lemma \ref{ted} also follows from
\begin{equation}\label{ted3}
\mu_{odd}(e)\mu_{even}(\Ad({}^{\lambda}_{\ge 0}G_{y,\ge r})e)=[G_{y,\ge0}:G_{y,\ge r}\cdot{}^{\lambda}_{\ge 0}G_{y,\ge0}]\cdot\mu_e(\Ad(G_{y,\ge r})e).
\end{equation}
Instead of assumptions (i) and (ii) above, we have for $r\gg0$
\renewcommand{\theenumi}{\roman{enumi}$'$}
\begin{enumerate}
\item $\mu_{even}(\Ad({}^{\lambda}_{\ge 0}G_{y,\ge r})e)=\mu_{even}(\ad({}^{\lambda}_{\ge 0}\Lg_{y,\ge r})e)$.
\item $\mu_e(\Ad(G_{y,\ge r})e)=m_e(\mathrm{im}(\Lg_{y,\ge r}\ra\Lg/Z_{\Lg}(e)))$.
\end{enumerate}
They follow from (\ref{tangent}) and (\ref{tangent2}). Using (i$'$) and (ii$'$) we rewrite (\ref{ted3}) into
\begin{equation}\label{ted3*}
\mu_{odd}(e)\mu_{even}(\ad({}^{\lambda}_{\ge 0}\Lg_{y,\ge r})e)=[G_{y,\ge0}:G_{y,\ge r}\cdot{}^{\lambda}_{\ge 0}G_{y,\ge0}]\cdot m_e(\mathrm{im}(\Lg_{y,\ge r}\ra\Lg/Z_{\Lg}(e))).
\end{equation}
We claim that (\ref{ted3*}) is equivalent to (\ref{ted2*}) which was proved earlier, thus finishing the proof of Lemma \ref{ted}. Indeed, the difference of (\ref{ted2*}) and (\ref{ted3*}) is given by the ratio
\[
\frac{[\ad({}^{\lambda}_{\ge 0}\Lg_{y,>0})e:\ad({}^{\lambda}_{\ge 0}\Lg_{y,\ge r})e]}{[\mathrm{im}(\Lg_{y,>0}\ra\Lg/Z_{\Lg}(e)):\mathrm{im}(\Lg_{y,\ge r}\ra\Lg/Z_{\Lg}(e))]}
\cdot\frac{[G_{y,0}:{}^{\lambda}_{\ge0}G_{y,0}]}{[G_{y,\ge0}:G_{y,\ge r}\cdot{}^{\lambda}_{\ge 0}G_{y,\ge0}]}\]
\mhp
\[=\frac{[\ad(e){}^{\lambda}_{\ge 0}\Lg_{y,>0}:\ad(e){}^{\lambda}_{\ge 0}\Lg_{y,\ge r}]}{[\ad(e)\Lg_{y,>0}:\ad(e)\Lg_{y,\ge r}]}\cdot[G_{y,>0}:G_{y,\ge r}\cdot{}^{\lambda}_{\ge0}G_{y,>0}].
\]
Canceling the first fraction and applying (\ref{MP}) on the second part gives
\[=\frac{1}{[\ad(e){}^{\lambda}_{<0}\Lg_{y,>0}:\ad(e){}^{\lambda}_{<0}\Lg_{y,\ge r}]}\cdot[\Lg_{y,>0}:\Lg_{y,\ge r}\cdot{}^{\lambda}_{\ge0}\Lg_{y,>0}]=1
\]
where the last identity follows from the injectivity of $\ad(e)$ on ${}^{\lambda}_{<0}\Lg$.
\end{proof}\hp

In Section \ref{secdim} and \ref{secSha} we promised to prove Proposition \ref{comb}, (\ref{A1}), (\ref{A2}) and (\ref{A4}). We now give the proofs using (\ref{RR2}) and Lemma \ref{ted}.
\begin{proof}[Proof of Proposition \ref{comb}]
We apply (\ref{RR2}) with $y=x$ and $f=1_{\Lg_{x,\ge 0}}$. We have $\Lg_{x,0}={}^{\lambda}_{0}\Lg_{x,0}$, and ${}^{\lambda}_{<0}\Lg_{x,0}=0$. This implies $\ad(X)({}^{\lambda}_{-1}\Lg_{x,\ge0})=\ad(X)({}^{\lambda}_{-1}\Lg_{x,>0})\subset{}^{\lambda}_{1}\Lg_{x,>0}$. Hence $\mu_{odd}\le 1$ on $\Lg_{x,\ge 0}$, and consequently $I_e(1_{\Lg_{x,\ge0}})_{\alpha}\le\mu_{even}(\Lg_{x,>0})=1$ for all $\alpha\in W_x\bsl W/W_P=W/W_P$. This proves the proposition.
\end{proof}\hp

\begin{proof}[Proof of (\ref{A1}) and (\ref{A2})]
Equation (\ref{A2}) is a direct result of Proposition \ref{comb} and Lemma \ref{dilate}.
For (\ref{A1}), by Lemma \ref{dilate} it suffices to prove $I_{te_1}(1_{\Lg_{x,\ge 0}})=I_{e_1}(1_{\Lg_{x,\ge 0}})=|W|$. We apply (\ref{RR2}) with $y=x$ and $e=e_1$. Note that when $e$ is regular, $\mu_{odd}$ is trivial as $\lambda$ has no odd weight space\footnote{This is because $\lambda$ has weight $2$ on all simple roots; $e$ can taken to be a sum of non-trivial elements from each simple root space. Now all roots are generated by simple roots and thus have even weights.}, and $\mu_{even}({}^{\lambda}_{>0}\Lg_{y,\ge0})=\mu_{even}({}^{\lambda}_{\ge2}\Lg_{y,>0})=1$ since $y=x$ lies in an alcove. This gives $I_{e_1}(1_{\Lg_{x,\ge 0}})=\sum_{\alpha\in W_y\bsl W/W_p}1$. Now $W_y=W_x=1$ as $x$ lies in an alcove and $W_P=1$ as $P$ is a Borel, hence the result.
\end{proof}\hp


\begin{proof}[Proof of (\ref{A4})]
The proof of (\ref{A4}) will be the longest. It is in spirit close of how the Steinberg variety is used to bound the dimension of Springer fibers (e.g. the proof of \cite[Cor. 3.3.24]{CG10}). We apply (\ref{RR2}) with $y=x_i$, $e=e_j$, $f=f_i$ and for convenience write $\sigma=\bar{e}_i$. In particular $\sigma$ is a nilpotent element in $\Lg_{y,0}$. Let us also write $d:=\dim\Ad(\bG_{y,0})\sigma$ ($=d_i$ in the old setting at (\ref{A4})). Let $f^{\sharp}$ be the function that takes value $q^{-d/2}$ on $\Ad(G_{y,0})\sigma+\Lg_{y,>0}$. This function is somewhat in between $f_i'$ and $f_i^*$ in Lemma \ref{L12o} and \ref{L12a}, and with the same proof as theirs we have
\begin{lemma}\label{L19}
 $I_e(f)/I_e(f^{\sharp})=O(1)$.
\end{lemma}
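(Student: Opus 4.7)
The plan is to mimic the combined arguments of Lemmas \ref{L12o} and \ref{L12a} verbatim, with the stable regular semisimple orbital integral $I_{\gamma_0}^{st}$ replaced by the nilpotent orbital integral $I_e$. The only property of those integrals that enters the earlier proofs is their $G$-conjugation invariance, which $I_e$ equally enjoys, so the entire argument transports.

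Set $y = x_i$, $\sigma = \bar e_i$ and $d = d_i$. I would let $g_1,\ldots,g_N \in G_{y,\ge 0}$ be lifts of the elements of $G_{y,0}$, where $N = |G_{y,0}|$, and form
\[f' := \frac{1}{N}\sum_{j=1}^N \Ad(g_j) f.\]
Because $\Lg_{y,>0}$ is normalized by $G_{y,\ge 0}$ and the induced action on $\Lg_{y,0}$ factors through $G_{y,0}$, a routine orbit-stabilizer count (identical to the one in the proof of Lemma \ref{L12o}) shows that $f'$ is supported on $\Ad(G_{y,0})\sigma + \Lg_{y,>0}$ with constant value $q^{d/2}/|\Ad(G_{y,0})\sigma|$, and one has $I_e(f) = I_e(f')$ by invariance. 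Since $f^{\sharp}$ has the same support, one then obtains the pointwise identity $f' = C\cdot f^{\sharp}$ with $C = q^d/|\Ad(G_{y,0})\sigma|$, whence $I_e(f) = C \cdot I_e(f^{\sharp})$.

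It then remains only to bound $C$, but this is precisely the computation carried out in the proof of Lemma \ref{L12a}: by (\ref{L12b}) one has $C = |(\pi_0(Z_{\bG_{y,0}}(\sigma)))(k)| + O(q^{-1/2}) = O(1)$, giving $I_e(f)/I_e(f^{\sharp}) = C = O(1)$ as required (with the convention $0/0 = 0$ in the degenerate case $I_e(f^{\sharp}) = 0$, which forces $I_e(f)=0$ anyway). No substantive obstacle is expected; the sole new observation needed is the elementary support identity $\Ad(G_{y,\ge 0})(\sigma + \Lg_{y,>0}) = \Ad(G_{y,0})\sigma + \Lg_{y,>0}$, which is immediate from the definition of the reductive quotient.
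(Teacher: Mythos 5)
Your proof is correct and follows exactly the route the paper intends: it explicitly restates that Lemma \ref{L19} is proved ``with the same proof as'' Lemmas \ref{L12o} and \ref{L12a}, namely averaging $f$ over $G_{y,\ge 0}$ using conjugation invariance of $I_e$, and then comparing the averaged function with $f^{\sharp}$ via the constant computed in (\ref{L12b}). Your observation that the averaged function equals $C\cdot f^{\sharp}$ exactly (rather than merely satisfying a pointwise inequality, as in Lemma \ref{L12a}) is a small but accurate simplification that the equal supports allow.
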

It then suffices to prove $I_e(f^{\sharp})=O(1)$. By (\ref{RR2}) it suffices to prove each $I_{e_j}(f^{\sharp})_{\alpha}=O(1)$ using Lemma \ref{ted}, where we likewise assume $w_{\alpha}=1$ without loss of generality. Note that $\mu_{even}({}^{\lambda}_{\ge2}\Lg_{y,>0})=1+O(q^{-1/2})$ by the very definition in (\ref{even}) (thanks to the basic (\ref{basicest})). We thus arrive at
\begin{lemma} To prove (\ref{A4}), it suffices to prove
\begin{equation}\label{L20}
I_{e_j}(f^{\sharp})_{\alpha}/\mu_{even}({}^{\lambda}_{\ge2}\Lg_{y,>0})=O(1).
\end{equation}
\end{lemma}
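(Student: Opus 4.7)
The plan is to chain together three routine reductions that together convert the target estimate~(\ref{A4}) into the normalized estimate~(\ref{L20}). None of the steps is serious in itself; the substance lies in the subsequent proof of~(\ref{L20}), which this lemma merely sets up.

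First, apply Lemma~\ref{L19} (just proved), which gives $I_{e_j}(f_i) = O(1) \cdot I_{e_j}(f^{\sharp})$, reducing the task to bounding $I_{e_j}(f^{\sharp})$. Next, since $f^{\sharp}$ is by construction $G_{y,\ge 0}$-conjugation invariant (being the indicator of an $\Ad(G_{y,0})$-invariant set times a constant), the Ranga Rao decomposition~(\ref{RR2}) expresses
$$I_{e_j}(f^{\sharp}) = \sum_{\alpha \in W_y \bsl W / W_P} I_{e_j}(f^{\sharp})_\alpha,$$
a \emph{finite} sum (since $W$ is finite), so it is enough to show $I_{e_j}(f^{\sharp})_\alpha = O(1)$ for each $\alpha$.

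Finally, one must check that dividing by $\mu_{even}({}^{\lambda}_{\ge 2}\Lg_{y,>0})$ does not affect whether a quantity is $O(1)$. The defining formula~(\ref{even}) reads
$$\mu_{even}({}^{\lambda}_{\ge 2}\Lg_{y,>0}) = \frac{|G_{y,0}|}{|{}^{\lambda}_{<0}\Lg_{y,0}| \cdot |{}^{\lambda}_{\ge 0}G_{y,0}|}.$$
Combined with the dimension identity $\dim \bG_{y,0} = \dim {}^{\lambda}_{<0}\Lg_{y,0} + \dim {}^{\lambda}_{\ge 0}\bG_{y,0}$ (since ${}^{\lambda}_{\ge 0}\bG_{y,0}$ is the parabolic of $\bG_{y,0}$ with Lie algebra ${}^{\lambda}_{\ge 0}\Lg_{y,0}$) and the basic point-count estimate~(\ref{basicest}) applied to the three irreducible varieties appearing, this forces
$$\mu_{even}({}^{\lambda}_{\ge 2}\Lg_{y,>0}) = 1 + O(q^{-1/2}).$$
In particular the ratio in~(\ref{L20}) is $O(1)$ if and only if $I_{e_j}(f^{\sharp})_\alpha = O(1)$, completing the chain.

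There is no genuine obstacle in this lemma; it is a purely mechanical assembly of the results established immediately above (Lemma~\ref{L19}, the decomposition~(\ref{RR2}), and the elementary normalization of $\mu_{even}$). The real work lies ahead in bounding~(\ref{L20}) itself, which is why the statement has been isolated in this way: one expects to combine Lemma~\ref{ted} (the explicit Ranga Rao formula for $\mu_\alpha = \mu_{odd}\cdot\mu_{even}$) with a Steinberg-type dimension estimate for $\Ad(P)(e_j) \cap (\Ad(G_{y,0})\sigma + \Lg_{y,>0})$, in the spirit of \cite[Cor. 3.3.24]{CG10}, and the present lemma has been designed precisely so that the $\mu_{even}({}^{\lambda}_{\ge 2}\Lg_{y,>0})$ factor cancels cleanly against the normalization in $f^{\sharp}$.
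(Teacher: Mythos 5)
Your proof is correct and follows precisely the same chain of reductions that the paper uses in the paragraph immediately preceding the lemma (Lemma \ref{L19}, the finite decomposition (\ref{RR2}) justified by the $G_{y,\ge 0}$-conjugation invariance of $f^{\sharp}$, and the normalization $\mu_{even}({}^{\lambda}_{\ge 2}\Lg_{y,>0}) = 1 + O(q^{-1/2})$ deduced from (\ref{even}) and (\ref{basicest})). You have also correctly filled in the small verification the paper leaves implicit, namely the dimension count showing the exponents in (\ref{even}) cancel.
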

Recall that $\lambda:\mathbb{G}_m\ra\bS$ is associated to $e$ by the Jacobson-Morozov  theorem. For $X\in{}^{\lambda}_{2}\Lg_{y,\ge0}$, we have $\ad(X)({}^{\lambda}_{-1}\Lg_{y,>0})\subset{}^{\lambda}_1\Lg_{y,>0}$ and thus
\[\mu_{odd}(X)\le[\ad(X)({}^{\lambda}_{-1}\Lg_{y,\ge0}):\ad(X)({}^{\lambda}_{-1}\Lg_{y,>0})]^{1/2}= q^{\frac{1}{2}\dim\ad(X)({}^{\lambda}_{-1}\Lg_{y,0})}.\]
Denote by $\mu_{odd}^*(\bar{e}):=q^{\frac{1}{2}\dim\ad({}^{\lambda}_{2}\bar{e})({}^{\lambda}_{-1}\Lg_{y,0})}$ for any $\bar{e}\in{}^{\lambda}_{\ge2}\Lg_{y,0}$, where ${}^{\lambda}_{2}\bar{e}$ is the projection of $\bar{e}$ in ${}^{\lambda}_{2}\Lg_{y,0}$. Now we begin our proof of (\ref{L20}) as
\[
\frac{I_{e_j}(f^*)_{\alpha}}{\mu_{even}({}^{\lambda}_{\ge2}\Lg_{y,>0})}\le q^{-\frac{1}{2}\dim\Ad(\bG_{y,0})\sigma}\cdot\sum_{\bar{e}\in\Ad(G_{y,0})\sigma\cap{}^{\lambda}_{\ge2}\Lg_{y,0}}\mu_{odd}^*(\bar{e})\mu_{even}({}^{\lambda}_{\ge2}\Lg_{y,>0}).\]
\[=q^{-\frac{1}{2}\dim\Ad(\bG_{y,0})\sigma}\cdot\frac{|{}^{\lambda}_{\ge0}G_{y,0}|}{|G_{y,0}|}\cdot\sum_{g\in G_{y,0}/{}^{\lambda}_{\ge0}G_{y,0}}\left(\sum_{\bar{e}\in\Ad(G_{y,0})\sigma\cap\Ad(g)({}^{\lambda}_{\ge2}\Lg_{y,0})}\mu_{odd}^*(\Ad(g^{-1})\bar{e})\right)
\]
\[=q^{-\frac{1}{2}\dim\Ad(\bG_{y,0})\sigma}\cdot\frac{|{}^{\lambda}_{\ge0}G_{y,0}|}{|Z_{G_{y,0}}(\sigma)|}\cdot\sum_{g\in G_{y,0}/{}^{\lambda}_{\ge0}G_{y,0},\;\Ad(g)({}^{\lambda}_{\ge2}\Lg_{y,0})\ni\sigma}\mu_{odd}^*(\Ad(g^{-1})\sigma)
\]
\[=q^{-\frac{1}{2}\dim\Ad(\bG_{y,0})\sigma}\cdot\frac{|{}^{\lambda}_{\ge0}G_{y,0}|}{|Z_{G_{y,0}}(\sigma)|}\cdot\]
\[\sqrt{\sum_{g_1,g_2\in G_{y,0}/{}^{\lambda}_{\ge0}G_{y,0},\;\Ad(g_1)({}^{\lambda}_{\ge2}\Lg_{y,0})\cap\Ad(g_2)({}^{\lambda}_{\ge2}\Lg_{y,0})\ni\sigma}\mu_{odd}^*(\Ad(g_1^{-1})\sigma)\mu_{odd}^*(\Ad(g_2^{-1})\sigma)}
\]
\[= q^{-\frac{1}{2}\dim\Ad(\bG_{y,0})\sigma}\cdot\frac{|{}^{\lambda}_{\ge0}G_{y,0}|}{|Z_{G_{y,0}}(\sigma)|\cdot\sqrt{|\Ad(G_{y,0})\sigma|}}\cdot\]
\[\sqrt{\sum_{g_1,g_2\in G_{y,0}/{}^{\lambda}_{\ge0}G_{y,0},\;\bar{e}\in\Ad(G_{y,0})\sigma,\;\Ad(g_1)({}^{\lambda}_{\ge2}\Lg_{y,0})\cap\Ad(g_2)({}^{\lambda}_{\ge2}\Lg_{y,0})\ni\bar{e}}\mu_{odd}^*(\Ad(g_1^{-1})\sigma)\mu_{odd}^*(\Ad(g_2^{-1})\sigma)}
\]
\[\le q^{-\frac{1}{2}\dim\Ad(\bG_{y,0})\sigma}\cdot\frac{|{}^{\lambda}_{\ge0}G_{y,0}|}{|Z_{G_{y,0}}(\sigma)|\cdot\sqrt{|\Ad(G_{y,0})\sigma|}}\cdot\]
\[\sqrt{\sum_{g_1,g_2\in G_{y,0}/{}^{\lambda}_{\ge0}G_{y,0}}\left(\sum_{\bar{e}\in\Ad(g_1)({}^{\lambda}_{\ge2}\Lg_{y,0})\cap\Ad(g_2)({}^{\lambda}_{\ge2}\Lg_{y,0})}\mu_{odd}^*(\Ad(g_1^{-1})\bar{e})\mu_{odd}^*(\Ad(g_2^{-1})\bar{e})\right)}.
\]\p

Here the last inequality is given by removing the constraint $\bar{e}\in\Ad(\bG_{y,0})\sigma$. We want to prove that the above huge term is of the order $O(1)$. We can simplify the sum over $g_1,g_2$ using the Bruhat decomposition: the orbits of $(g_1,g_2)$ (under left $G_{y,0}$-action) are represented by $(1,w)$ with $w\in{}^{\lambda}_{\ge0}G_{y,0}\bsl G_{y,0}/{}^{\lambda}_{\ge0}G_{y,0}\cong W_P\bsl W_y/W_P$. We choose $w$ to be a representative that normalizes $\bS\subset\bG_{y,0}$, and write $w\lambda:=\Ad(w)\lambda$ again a cocharacter. Now $\lambda$ and $w\lambda$ both have image in $\bS$ and give gradings on $\Lg_{y,0}$ so that we have 
\[\Lg_{y,0}=\bigoplus_i\bigoplus_j{}^{w\lambda}_i\left({}^{\lambda}_j\Lg_{y,0}\right)\text{, where }{}^{w\lambda}_i\left({}^{\lambda}_j\Lg_{y,0}\right):={}^{w\lambda}_i\Lg_{y,0}\cap{}^{\lambda}_j\Lg_{y,0}.\]
There is a $\bG_{y,0}$-invariant form on $\Lg_{y,0}$ that necessarily have weight $0$ in these grading. Thus we have
\begin{equation}\label{dualw}
{}^{\lambda}_j\Lg_{y,0}\cong({}^{\lambda}_{-j}\Lg_{y,0})^*\text{ and }{}^{w\lambda}_i\left({}^{\lambda}_j\Lg_{y,0}\right)\cong\left({}^{w\lambda}_{-i}\left({}^{\lambda}_{-j}\Lg_{y,0}\right)\right)^*\text{, etc.}
\end{equation}
The orbit of $(1,w)$ has size $[{}^{\lambda}_{\ge0}G_{y,0}\cdot w\cdot{}^{\lambda}_{\ge0}G_{y,0}:{}^{\lambda}_{\ge0}G_{y,0}]=[{}^{\lambda}_{\ge0}G_{y,0}:{}^{\lambda}_{\ge0}G_{y,0}\cap{}^{w\lambda}_{\ge0}G_{y,0}]=O(1)\cdot[{}^{\lambda}_{>0}\Lg_{y,0}:{}^{w\lambda}_{>0}({}^{\lambda}_{>0}\Lg_{y,0})]$. On the other hand, we have
\[q^{-\frac{1}{2}\dim\Ad(\bG_{y,0})\sigma}\cdot\frac{|{}^{\lambda}_{\ge0}G_{y,0}|}{\sqrt{|G_{y,0}|\cdot|Z_{G_{y,0}}(\sigma)|}}=O(1)\cdot(q^{-\dim{}^{\lambda}_{<0}\Lg_{y,0}})=O(1)\cdot(|{}^{\lambda}_{>0}\Lg_{y,0}|^{-1}),\]
where we use (\ref{dualw}) in the last equality. We are thus reduced to prove
\[\sum_{\bar{e}\in{}^{w\lambda}_{\ge2}({}^{\lambda}_{\ge2}\Lg_{y,0})}\mu_{odd}^*(\bar{e})\mu_{odd}^*(\Ad(w^{-1})\bar{e})\le|{}^{w\lambda}_{>0}({}^{\lambda}_{>0}\Lg_{y,0})|.\]\p

When $\lambda$ has no odd weight spaces, ${}^{\lambda}_{>0}\Lg_{y,0}={}^{\lambda}_{\ge2}\Lg_{y,0}$. Conjugation by $w$ implies also ${}^{w\lambda}_{>0}\Lg_{y,0}={}^{w\lambda}_{\ge2}\Lg_{y,0}$. Hence ${}^{w\lambda}_{>0}({}^{\lambda}_{>0}\Lg_{y,0})={}^{w\lambda}_{\ge2}({}^{\lambda}_{\ge2}\Lg_{y,0})$. Moreover $\mu_{odd}^*\equiv 1$, hence the above inequality holds as an equality in this case. In general when ${}^{\lambda}_{1}\Lg_{y,0}$ is possibly non-trivial, it suffices to prove that for all $\bar{e}\in{}^{w\lambda}_{\ge2}({}^{\lambda}_{\ge2}\Lg_{y,0})$ we have \[\mu_{odd}^*(\bar{e})\mu_{odd}^*(\Ad(w^{-1})\bar{e})\le\frac{|{}^{w\lambda}_{>0}({}^{\lambda}_{>0}\Lg_{y,0})|}{|{}^{w\lambda}_{\ge2}({}^{\lambda}_{\ge2}\Lg_{y,0})|}=|{}^{w\lambda}_{1}({}^{\lambda}_{\ge2}\Lg_{y,0})|\cdot|{}^{w\lambda}_{\ge2}({}^{\lambda}_{1}\Lg_{y,0})|\cdot|{}^{w\lambda}_{1}({}^{\lambda}_{1}\Lg_{y,0})|.\]
Squaring each side, the above inequality will follow from a combination of
\[\left(\mu_{odd}^*(\Ad(w^{-1})\bar{e})\right)^2\le|{}^{w\lambda}_1({}^{\lambda}_{\ge2}\Lg_{y,0})|^2\cdot|{}^{w\lambda}_{1}({}^{\lambda}_{1}\Lg_{y,0})|\]
and
\[\left(\mu_{odd}^*(\bar{e})\right)^2\le|{}^{w\lambda}_{\ge2}({}^{\lambda}_{1}\Lg_{y,0})|^2\cdot|{}^{w\lambda}_{1}({}^{\lambda}_{1}\Lg_{y,0})|.\]
We note that the second inequality becomes the first by replacing $\bar{e}=\Ad(w^{-1})\bar{e}$, $\lambda=w^{-1}\lambda$ and $w=w^{-1}$. We will prove the second inequality, which from the definition of $\mu_{odd}^*$, is the same as
\begin{equation}\label{last}\dim\ad({}^{\lambda}_2\bar{e})({}^{\lambda}_{-1}\Lg_{y,0})\le 2\dim{}^{w\lambda}_{\ge2}({}^{\lambda}_{1}\Lg_{y,0})+\dim{}^{w\lambda}_{1}({}^{\lambda}_{1}\Lg_{y,0}).
\end{equation}\p

Write $V=\ad({}^{\lambda}_2\bar{e})({}^{\lambda}_{-1}\Lg_{y,0})\subset{}^{\lambda}_1\Lg_{y,0}$, and consider $\pi:{}^{\lambda}_1\Lg_{y,0}\ra{}^{w\lambda}_{\le0}\left({}^{\lambda}_1\Lg_{y,0}\right)$ the natural projection. Since $\bar{e}\in{}^{w\lambda}_{\ge 2}\Lg_{y,0}$, we have $\pi(\ad({}^{\lambda}_2\bar{e})({}^{w\lambda}_{\ge-1}({}^{\lambda}_{-1}\Lg_{y,0})))=0$. Hence $\dim(\pi(V))\le\dim{}^{w\lambda}_{\le-2}({}^{\lambda}_{-1}\Lg_{y,0})=\dim{}^{w\lambda}_{\ge2}({}^{\lambda}_{1}\Lg_{y,0})$ by (\ref{dualw}). We then have
\[\begin{array}{ccc}
\dim V=\dim(\pi(V))+\dim(\ker(\pi|_V))&\le&\dim{}^{w\lambda}_{\ge2}({}^{\lambda}_{1}\Lg_{y,0})+\dim{}^{w\lambda}_{\ge1}({}^{\lambda}_{1}\Lg_{y,0})
\\&=&2\dim{}^{w\lambda}_{\ge2}({}^{\lambda}_{1}\Lg_{y,0})+\dim{}^{w\lambda}_{1}({}^{\lambda}_{1}\Lg_{y,0}).
\end{array}\]
which is exactly (\ref{last}). This finishes the proof of (\ref{A4}).
\end{proof}\p

\section{A similarity with components of the Steinberg variety}\label{secSt}

This section is independent from the main result (Theorem \ref{main}). Here we discuss extra results along the method which hint that the $|W|$ orbits of components in the main result might be related to the fact that the Steinberg variety also have $|W|$ components (which has an explicit construction, see e.g. \cite[Cor. 3.3.5]{CG10}). We also propose two conjectures partially inspired by the results. Let $x\in\CA_{\bS}$ be in an alcove as before and $y\in\CA_{\bS}$ be contained in the closure of the alcove. Fix $\gamma=t\gamma_0$ where $\gamma_0\in\Lg$ is topologically nilpotent and regular semisimple as in Section \ref{secSha}.\p

To begin with, we need a generalization of (\ref{GKM3}). Let $\bar{e}\in\Lg_{y,0}$ be a nilpotent element. Recall that in (\ref{ASF2}) we have the affine Springer fiber $\CX_{y,\gamma}$ with
\[\CX_{y,\gamma}(k)=\{g\in G/G_{y,\ge 0}\;|\;\Ad(g^{-1})\gamma\in\Lg_{y,\ge0}\},\]
and similarly for any $k'/k$ finite by taking the corresponding unramified base change of $F$. Denote by $\theta:\Lg_{y,\ge 0}\sra\Lg_{y,0}$ the natural projection. Then the map $(g\mapsto\theta(\Ad(g^{-1})\gamma))$ gives a natural map $\phi$ from $\Lg_{y,\gamma}$ to the stack $[\Lie\bG_{y,0}/\bG_{y,0}]$, where $\bG_{y,0}$ acts on the affine variety $\Lie\bG_{y,0}$ by $\Ad$. Let $\bar{e}\in\Lg_{y,0}$ be any nilpotent element. Then $[\Ad(\bG_{y,0})\bar{e}/\bG_{y,0}]\subset[\Lie\bG_{y,0}/\bG_{y,0}]$ is a locally closed substack, and the generalized affine Springer fiber $\CX_{y,\bar{e},\gamma}$ can be identified as the preimage of this substack under $\phi$. Note that as $\Ad(\bG_{y,0})\bar{e}=Z_{\bG_{y,0}}(\bar{e})\bsl G$, we have $[\Ad(\bG_{y,0})\bar{e}/\bG_{y,0}]\cong [\Spec k/Z_{\bG_{y,0}}(\bar{e})]$.\p

Write $A(\bar{e}):=\pi_0(Z_{\bG_{y,0}}(\bar{e}))$. Identify $\bar{e}\in[\Ad(\bG_{y,0})\bar{e}/\bG_{y,0}](k)$ and we have a natural map
\[\pi_1([\Ad(\bG_{y,0})\bar{e}/\bG_{y,0}],\bar{e})\xra{\sim}\pi_1([\Spec k/Z_{\bG_{y,0}}(\bar{e})],\Spec k).\]
We now take a finite base change of $k$ so that $\Gal(\bar{k}/k)$ acts trivially on $A(\bar{e})$. In this case there is a natural map $\pi_1([\Spec k/Z_{\bG_{y,0}}(\bar{e})],\Spec k)\sra A(\bar{e})=\pi_0(Z_{\bG_{y,0}}(\bar{e}))$. For any ($\ell$-adic) $\eta\in \Irr(A(\bar{e}))$, the above map induces a local system $\CL_{\eta}^o$ on $[\Ad(\bG_{y,0})\bar{e}/\bG_{y,0}]$. This local system has the following property: for any $\bar{e}'\in(\Ad(\bG_{y,0})\bar{e})(k)$, there exists $\bar{g}\in\bG_{y,0}(\bar{k})$ such that $\Ad(\bar{g}^{-1})\bar{e}=\bar{e}'$. Then since both $\bar{e}$ and $\bar{e}'$ are defined over $k$, we have $\Ad((\sigma.\bar{g})^{-1}\bar{g})\bar{e}=\bar{e}$ for any $\sigma\in\Gal(\bar{k}/k)$. This gives a cohomology class $(\sigma\mapsto(\sigma.\bar{g})^{-1}\bar{g})\in H^1(k,Z_{\bG_{y,0}}(\bar{e}))\cong H^1(k,A(\bar{e}))$, where the last isomorphism follows from Lang's theorem. Note that different choices of $\bar{g}$ gives the same cohomology class. This element in $H^1(k,A(\bar{e}))$ can be identified with a conjugacy class $\tau_{\bar{e}'}$ in $A(\bar{e})$. We then have
\begin{equation}\label{sheaf0}
\Tr(\Frob:(\CL_{\eta}^o)_{\bar{e}'})=\Tr(\eta(\tau_{\bar{e}'})).
\end{equation}
Denote by $\CL_{\eta}$ the pullback of $\CL_{\eta}^o$ to $\CX_{y,\bar{e},\gamma}$ under $\phi$. Note that since $\phi:\CX_{g,\bar{e},\gamma}\ra[\Ad(\bG_{y,0})\bar{e}/\bG_{y,0}]$ is $\CT$-invariant, $\CL_{\eta}$ is naturally a $\CT$-equivariant local system. 
For any $g\in\CX_{y,\bar{e},\gamma}(k)$ we likewise have
\begin{equation}\label{sheaf}
\Tr(\Frob:(\CL_{\eta})_g)=\Tr(\eta(\tau_{\theta(\Ad(g^{-1})\gamma)})).
\end{equation}
\p

Let $\bar{f}_{\eta}$ be the function on $\Lg_{y,0}$ which takes the value $\bar{f}_{\eta}(\bar{e}')=\Tr(\eta(\tau_{\bar{e}'}))$ for $\bar{e}'\in(\Ad(\bG_{y,0})\bar{e})(k)$ and zero otherwise. Let $f_{\eta}$ be the inflation of $\bar{f}_{\eta}$ to $\Lg_{y,\ge 0}$. We have

\begin{lemma}\label{GKM4l}  We have a generalization of (\ref{GKM3}) as
\begin{equation}\label{GKM4}
I_{\gamma}^{st}(f_{\eta})=\frac{|G_{y,0}|}{|T_0|}\cdot q^{(-v(D(\gamma))-\dim\bG_{y,0}+\dim\bT_0)/2}\cdot\mathrm{Tr}(\mathrm{Frob}\,;\,H_c^*(\CX_{y,\bar{e},\gamma}\times\Spec\bar{k},\CL_{\eta})^{\CT}).
\end{equation}
\end{lemma}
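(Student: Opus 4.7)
The plan is to mimic the proof of (\ref{GKM3}) given in \cite[3.2.11, 3.4.11]{Yu16}, with the local system $\CL_\eta$ accommodated via a weighted Grothendieck--Lefschetz count. The underlying geometric input---the identification of $\Frob$-stable $\CT$-orbits on $\CX_{y,\bar{e},\gamma}(\bar{k})$ with stable $G(F^{ur})$-conjugates of $\gamma$ inside $\Lg_{y,\ge 0}^{(\bar{e})}$ modulo $G_{y,\ge 0}$, together with the resulting point-count identity---is unchanged by the presence of $\CL_\eta$.

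First, I would apply the Grothendieck--Lefschetz trace formula to the pair $(\CX_{y,\bar{e},\gamma},\CL_\eta)$ and project onto the $\CT$-fixed part of cohomology. The orbit analysis in \cite[3.4.11]{Yu16} goes through verbatim: the $\CT$-action is not affected by the sheaf, and $\CL_\eta$ is tautologically $\CT$-equivariant, being pulled back from $[\Ad(\bG_{y,0})\bar{e}/\bG_{y,0}]$ along the $\CT$-invariant map $\phi$. This expresses $\Tr(\Frob;H_c^*(\CX_{y,\bar{e},\gamma}\times\Spec\bar{k},\CL_\eta)^{\CT})$ as a sum, over representatives of $\Frob$-stable $\CT$-orbits, of the stalk trace $\Tr(\Frob;(\CL_\eta)_g)$ at a $k$-point $g$ in the orbit.

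Second, I invoke (\ref{sheaf}): for $g\in\CX_{y,\bar{e},\gamma}(k)$ the stalk trace equals $\Tr(\eta(\tau_{\theta(\Ad(g^{-1})\gamma)}))$, which by definition of $f_\eta$ is precisely $f_\eta(\Ad(g^{-1})\gamma)$. Thus the above becomes a sum of $f_\eta(\Ad(g^{-1})\gamma)$ over stable-orbit representatives, exactly the kind of point-count that (\ref{GKM3}) computes in the special case $f_\eta=1_{\Lg_{y,\ge0}^{(\bar{e})}}$. The measure-normalization comparison that is in force in (\ref{GKM3}), coming from (\ref{norm1}) and (\ref{norm2}), is oblivious to which function is being integrated; it produces the prefactor $\frac{|G_{y,0}|}{|T_0|}\cdot q^{(-v(D(\gamma))-\dim\bG_{y,0}+\dim\bT_0)/2}$ and converts the weighted count into $I_\gamma^{st}(f_\eta)$.

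The hard part will be verifying rigorously that the $\CT$-equivariance of $\CL_\eta$ is compatible with the Lang-twist bookkeeping used in the proof of (\ref{GKM3}) to match $\Frob$-stable $\CT$-orbits with stable conjugates of $\gamma$: concretely, one needs that for each $g\in\CX_{y,\bar{e},\gamma}(k)$ in such an orbit, the class $\tau_{\theta(\Ad(g^{-1})\gamma)}\in A(\bar{e})$ assembled via Lang's theorem coincides with the cocycle transporting the stable conjugate of $\gamma$ corresponding to $g$ back to the chosen base representative. This compatibility is essentially built into the construction of $\CL_\eta^o$ and the identification $H^1(k,Z_{\bG_{y,0}}(\bar{e}))\cong H^1(k,A(\bar{e}))$, but it is the only non-routine point; once it is checked, the remainder of the argument is the same measure manipulation as in (\ref{GKM3}).
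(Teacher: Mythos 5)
Your proposal is correct and matches the paper's proof essentially step for step: both arguments apply the Grothendieck--Lefschetz trace formula to $\CL_\eta$ on the quotient $[\tilde\Lambda\bsl\CX_{y,\bar{e},\gamma}]$ (you phrase this as projecting to the $\CT$-fixed part, which amounts to the same), invoke (\ref{sheaf}) to identify each stalk trace with $f_\eta(\Ad(g^{-1})\gamma)$, and observe the normalization factor is inherited unchanged from the $\eta=1$ case (\ref{GKM3}). The compatibility worry you flag at the end is precisely what the construction of $\CL_\eta^o$ via $H^1(k,Z_{\bG_{y,0}}(\bar{e}))\cong H^1(k,A(\bar{e}))$ is designed to guarantee, and the paper likewise takes this as built into (\ref{sheaf}).
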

\begin{proof} When $\eta=1$, this is just (\ref{GKM3}). In Section \ref{secgeom} we explained that (\ref{GKM3}) is proved with the same proof as that of \cite[Theorem 3.4.8]{Yu16} with the affine Springer fiber replaced by $\CX_{y,\bar{e},\gamma}$. For (\ref{GKM4}) we furthermore replace the constant sheaf $\underline{\mathbb{Q}_{\ell}}$ by $\CL_{\eta}$. Applying Grothendieck-Lefschetz trace formula to the sheaf $\CL_{\eta}$ on the stack\footnote{Here $\til{\Lambda}$ is an \'{e}tale sub-group scheme of $\CT$ that maps onto $\pi_0(\CT)$, see {\it loc. cit.}} $[\tilde\Lambda\bsl\CX_{y,\bar{e},\gamma}]$ as after \cite[(3.4.15)]{Yu16}, one has that the main term  $\mathrm{Tr}(\mathrm{Frob}\,;\,H_c^*(\CX_{y,\bar{e},\gamma}\times\Spec\bar{k},\CL_{\eta})^{\CT})$ of the RHS of (\ref{GKM4}) is equal to a sum of $\Tr(\Frob:(\CL_{\eta})_g)$ over $g\in[\tilde\Lambda\bsl\CX_{y,\bar{e},\gamma}](k)$. By (\ref{sheaf}) this is the sum of $\Tr(\eta(\tau_{\bar{e}_g}))$ over the same set. The same discussion in {\it loc. cit.} shows that this is equal to the LHS of (\ref{GKM4}) up to the same normalization factor $\frac{|G_{y,0}|}{|T_0|}\cdot q^{(-v(D(\gamma))-\dim\bG_{y,0}+\dim\bT_0)/2}$ that also appeared in (\ref{GKM3}).\end{proof}
\p

We denote by $\rho_{\eta}$ the representation of $W_y$ (=the Weyl group of $\bG_{y,0}$) associated to $(\Ad(\bG_{y,0})\bar{e},\eta)$ by the original Springer correspondence (see e.g. \cite[1.5]{Yu16} or \cite[3.6]{CG10}). Here we define $\rho_{\eta}:=0$ if $(\Ad(\bG_{y,0})\bar{e},\eta)$ is not in the image of the Springer correspondence. We can now state the following generalization of Theorem \ref{main} with the local system $\CL_{\eta}$. 

\begin{theorem}\label{Steinberg}$\,$ Write $d_{\bar{e}}:=\frac{1}{2}(\dim Z_{\bG_{y,0}}(\bar{e})-\rank\bG_{y,0})$ the dimension of the Springer fiber above $\bar{e}\in\Lg_{y,0}$, and $d:=\dim\CX_{\gamma}-d_{\bar{e}}$. We have
\[\dim\CX_{y,\bar{e},\gamma}=d\text{, and }\dim H_c^{2d}(\CX_{y,\bar{e},\gamma}\times\Spec\bar{k};\CL_{\eta})^{\CT}=\frac{|W|}{|W_y|}\dim\rho_{\eta}.\]
\end{theorem}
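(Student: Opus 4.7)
The argument will run in parallel to the proofs in Sections \ref{secSha} and \ref{secRR}, with the characteristic function $1_{\Lg_{x,\ge 0}}$ replaced by the Springer-weighted function $f_\eta$. I would first establish the upper bound $\dim\CX_{y,\bar{e},\gamma}\le d$: the map $\CX_\gamma\to\CX_{y,\gamma}$ of (\ref{mapSpr}) has fibers over $\CX_{y,\bar{e},\gamma}$ isomorphic to the Springer fiber $\CB_{\bar{e}}$, of dimension $d_{\bar{e}}$ by (\ref{SprF}) and (\ref{Sprdim}). Thus $\dim\CX_{y,\bar{e},\gamma}+d_{\bar{e}}\le\dim\CX_\gamma$, which is exactly (\ref{a6}) and yields $\dim\CX_{y,\bar{e},\gamma}\le d$.

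Next I would invoke Lemma \ref{GKM4l}. Because $\CL_\eta$ is pure of weight $0$ (the finite-group character $\eta$ has Frobenius eigenvalues that are roots of unity), the weight-based arguments of (\ref{basicest})--(\ref{GL}) apply in the twisted setting. Combining (\ref{GKM4}) with the upper bound above reduces both the equality of dimension and the computation of the top cohomological dimension to establishing the asymptotic
\begin{equation*}
I_\gamma^{st}(f_\eta)=\frac{|W|}{|W_y|}\dim\rho_\eta\cdot\frac{|G_{y,0}|}{|T_0|}\cdot q^{d+(-v(D(\gamma))-\dim\bG_{y,0}+\dim\bT_0)/2}\cdot(1+O(q^{-1/2})).
\end{equation*}
A nonzero leading coefficient forces $\dim\CX_{y,\bar{e},\gamma}=d$ and identifies the top Betti number as $\frac{|W|}{|W_y|}\dim\rho_\eta$.

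To establish this estimate, I would pass from $\gamma$ to $\gamma_0$ via Lemma \ref{dilate} and apply DeBacker's Shalika germ expansion (Theorem \ref{De2}) to write $I_{\gamma_0}^{st}(f_\eta')=\sum_j\Gamma_{e_j}^{st}(\gamma_0)I_{e_j}(f_\eta')$, where $f_\eta'$ is the corresponding $t$-dilate of $f_\eta$. The non-regular nilpotent terms ($j>1$) can be absorbed into the $O(q^{-1/2})$ error by bounds analogous to (\ref{A4}), obtained by Ranga Rao methods of Section \ref{secRR}. The leading term, from the regular nilpotent $e_1$, is $\Gamma_{e_1}^{st}(\gamma_0)\cdot I_{e_1}(f_\eta')$, with $\Gamma_{e_1}^{st}(\gamma_0)=1+O(q^{-1/2})$ as in the proof of (\ref{A5}). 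For $I_{e_1}(f_\eta')$ I would use (\ref{RR2}) and Lemma \ref{ted}: since $e_1$ is regular nilpotent, the associated parabolic $P$ is a Borel and $W_P=1$, so the Iwasawa sum runs over $\alpha\in W_y\backslash W$, producing exactly $|W|/|W_y|$ summands. Each summand, evaluated via Lemma \ref{ted}, reduces to a character sum inside the finite reductive quotient $\bG_{y,0}$ involving the values of $\eta$ on $A(\bar{e})$, and the finite-field Springer correspondence for $\bG_{y,0}$ then assembles these into $\dim\rho_\eta$.

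The main obstacle is showing that the Iwasawa--Ranga Rao summation, weighted by the $\eta$-characters encoded in $f_\eta$, evaluates (modulo $O(q^{-1/2})$) to exactly $\frac{|W|}{|W_y|}\dim\rho_\eta$. This amounts to a Springer-theoretic identity on the reductive quotient $\bG_{y,0}$: the Springer representation dimension $\dim\rho_\eta$ must be recovered from an explicit sum of character values of $\eta$ on conjugacy classes in $A(\bar{e})$, weighted by the Kostant-type measure appearing in Lemma \ref{ted}. Establishing this cleanly probably requires either a geometric argument via Lusztig's generalized Green functions on $\Lg_{y,0}$ or a careful direct match of orbit data with the Springer fiber cohomology, and is where I expect the bulk of the technical work to live.
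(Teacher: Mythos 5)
Your overall strategy matches the paper's exactly: reduce to the estimate $I_{\gamma}^{st}(f_{\eta}^*)=\frac{|W|}{|W_y|}\dim\rho_{\eta}+O(q^{-1/2})$ via Lemma \ref{GKM4l} and the dimension upper bound, pass to $\gamma_0$ by Lemma \ref{dilate}, plug into the Shalika-germ matrix system (\ref{21b}) with (\ref{A3})--(\ref{A6}) unchanged and (\ref{A1}), (\ref{A2}) replaced by twisted analogues, and control the secondary nilpotent terms by the Ranga Rao machinery. Your observation that $\CL_\eta$ is pure of weight zero (so the weight argument of (\ref{basicest}) applies to the twisted cohomology) and that a nonzero leading coefficient forces the dimension equality are both correct and are exactly how the paper closes the loop.

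However, there is a genuine gap where you defer the crucial computation: you write that the evaluation of $I_{e_1}(f_{\eta}^*)$ as $\frac{|W|}{|W_y|}\dim\rho_{\eta}$ ``probably requires either a geometric argument via Lusztig's generalized Green functions on $\Lg_{y,0}$ or a careful direct match of orbit data'' and you do not carry it out. This is the entire technical content of the new input (\ref{A7}) in the paper, and it does admit a concrete direct proof that you should supply. After applying (\ref{RR2}) and Lemma \ref{ted} with $e=e_1$ regular (so $\mu_{odd}\equiv 1$ and $\mu_{even}({}^{\lambda}_{\ge 2}\Lg_{y,>0})=1+O(q^{-1/2})$), each of the $|W|/|W_y|$ double cosets contributes the sum $\sum_{\epsilon\in(\Ad(\bG_{y,0})\bar{e})(k)\cap{}^{\lambda}_{>0}\Lg_{y,0}}f_{\eta}^*(\epsilon)$, which splits over conjugacy classes $\alpha\in A(\bar{e})/\mathrm{conj}$ weighted by $\Tr(\eta(\alpha))$. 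The key identity is a double-count: the number of pairs $(\bar{e}',\mathfrak{b})$ with $\bar{e}'\in\Ad(G_{y,0})\bar{e}_{\alpha}$ contained in the Borel subalgebra $\mathfrak{b}$ equals both $|\Ad(G_{y,0})\bar{e}_{\alpha}\cap{}^{\lambda}_{>0}\Lg_{y,0}|\cdot|G_{y,0}|/|{}^{\lambda}_{\ge0}G_{y,0}|$ and $|\CB_{\bar{e}_{\alpha}}(k)|\cdot|G_{y,0}|/|Z_{G_{y,0}}(\bar{e}_{\alpha})|$. Combining this with the Grothendieck--Lefschetz count $|\CB_{\bar{e}_{\alpha}}(k)|=q^{\dim\CB_{\bar{e}}}(\sum_{\eta'}\dim\rho_{\eta'}\Tr(\eta'(\alpha)))(1+O(q^{-1/2}))$ (coming from the Springer decomposition of $H^{2\dim\CB_{\bar{e}}}(\CB_{\bar{e}})=\bigoplus_{\eta'}(\eta')^{\oplus\dim\rho_{\eta'}}$, \cite[Theorem 1.5.1]{Yu16}), the estimate $|Z_{G_{y,0}}(\bar{e}_{\alpha})|=|Z_{A(\bar{e})}(\alpha)|\,q^{\dim Z_{\bG_{y,0}}(\bar{e})}(1+O(q^{-1/2}))$, and character orthogonality $\sum_{\alpha}|Z_{A(\bar{e})}(\alpha)|^{-1}\Tr(\eta(\alpha))\Tr(\eta'(\alpha))=\delta_{\eta\eta'}$, each double-coset contribution reduces to $\dim\rho_{\eta}(1+O(q^{-1/2}))$ after verifying the $q$-exponent vanishes. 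No appeal to generalized Green functions is needed; without this chain of identities the proof is incomplete.

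One further small structural point: you phrase the $j>1$ nilpotent contributions as being ``absorbed into the $O(q^{-1/2})$ error,'' but in the matrix identity (\ref{21b}) they participate through the inverse of the lower-triangular germ matrix. What you actually need, and what (\ref{A8}) provides, is the uniform bound $I_{e_i}(f_{\eta}^*)=O(1)$, after which the same matrix manipulation as in the proof of (\ref{maineq0}) yields the desired asymptotic; the bound itself follows easily from $|f_{\eta}^*|\le(\dim\eta)\cdot f_i^*$ and (\ref{A4}), which is simpler than a fresh Ranga Rao argument.
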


\begin{proof} For the first identity, the $\le$-part is obvious as we have in (\ref{mapSpr}) a natural map $\CX_{\gamma}\ra\CX_{y,\gamma}$ whose fiber over any point in $\CX_{y,\bar{e},\gamma}\subset\CX_{y,\gamma}$ is isomorphic to a Springer fiber above $\bar{e}$ and have dimension $d_{\bar{e}}$. Consequently the LHS of the second identity is (if non-zero) the top degree cohomology, and the second identity then implies the first as $\dim\rho_{\eta}>0$ at least when $\eta$ is trivial. By taking a further base change of $k$, we may assume that $\Gal(\bar{k}/k)$ acts trivially on the set of components of the Springer fiber above $\bar{e}$. Let $f_{\eta}$ be as in Lemma \ref{GKM4l} and $f_{\eta}^*:=q^{-\frac{1}{2}\dim\Ad(\bG_{y,0})\bar{e}}f_{\eta}$. We would like to prove
\begin{equation}
\label{St0}I_{\gamma}^{st}(f_{\eta}^*)=\frac{|W|}{|W_y|}\dim\rho_{\eta}+O(q^{-1/2}).
\end{equation}

\begin{lemma} The second identity in Theorem \ref{Steinberg} follows from (\ref{St0}).
\end{lemma}

\begin{proof} Plugging (\ref{St0}) into (\ref{GKM4}) we have
\begin{equation}\label{Stmid}
\mathrm{Tr}(\mathrm{Frob}\,;\,H_c^*(\CX_{y,\bar{e},\gamma}\times\Spec\bar{k},\CL_{\eta})^{\CT})=q^D\cdot\frac{|W|}{|W_y|}\cdot(1+O(q^{-1/2}))
\end{equation}
where
\[D:=\frac{1}{2}(v(D(\gamma))-\dim\bG_{y,0}+\dim\bT_0+\dim\Ad(\bG_{y,0})\bar{e}).\]
Using (\ref{Bezr}), the last equation is equal to
\[=\frac{1}{2}(2\dim\CX_{\gamma}+\rank\bG_{y,0}-\dim\bG_{y,0}+\dim\Ad(\bG_{y,0})\bar{e})\]
and by the first identity in Theorem \ref{Steinberg} that $\dim\CX_{\gamma}=\dim\CX_{y,\bar{e},\gamma}+2d_{\bar{e}}$, it is equal to
\[=\frac{1}{2}(2\dim\CX_{y,\bar{e},\gamma}+\dim Z_{\bG_{y,0}}(\bar{e})-\dim\bG_{y,0}+\dim\Ad(\bG_{y,0})\bar{e})=\dim\CX_{y,\bar{e},\gamma}.\]
In the LHS of (\ref{Stmid}), $\Frob$ acts on the top cohomology $H^{2d}_c(-)$ with weight $2d$ and $H^{<2d}_c(-)$ with smaller weights. Since (\ref{Stmid}) holds with $D=d$ for any finite base change of $k$ (i.e. the same estimate holds for $\Frob^n$, $n=1,2,...$), the result follows.
\end{proof}\hp

The proof of equation (\ref{St0}) will be a modification of the proof of (\ref{maineq0}). The modification needed is to change the function from $1_{\Lg_{x,\ge 0}}$ to $f_{\eta}^*$, $1_{\Lg_{x,\ge -1}}$ by $f_{\eta}^*$ dilated by $t^{-1}$, and $|W|$ (in the RHS of (\ref{maineq0}) and (\ref{A1})) to $\frac{|W|}{|W_y|}\dim\rho_{\eta}$. Equation (\ref{21b}) is then changed accordingly. Among (\ref{A1})$\sim$(\ref{A6}), the last four equations remain unchanged. For the first two, recall that thanks to Lemma \ref{dilate}, (\ref{A1}) and (\ref{A2}) are equivalent after dilating by $t$ to
\begin{align}
I_{e_1}(1_{\Lg_{x,\ge 0}})&=|W|.\\
I_{e_i}(1_{\Lg_{x,\ge 0}})&=O(1).
\end{align}
Here recall $e_1\in\Lg$ is a regular nilpotent element and $e_i\in\Lg$ is an arbitrary nilpotent element. For (\ref{St0}) they are to be replaced by
\begin{align}
\label{A7}I_{e_1}(f_{\eta}^*)&=\frac{|W|}{|W_y|}\dim\rho_{\eta}+O(q^{-1/2}).\\
\label{A8}I_{e_i}(f_{\eta}^*)&=O(1).
\end{align}

It remains to prove (\ref{A7}) and (\ref{A8}). For (\ref{A8}), we note that $f_{\eta}^*$ is very similar to the function $f_i^*$ in Section \ref{secSha}. Let $y=x_i$ and $\bar{e}=\bar{e}_i$. Then $f_{\eta}^*=f_i^*$ if $\eta$ is trivial. In general, since the trace value of $\eta$ is bounded (by $\dim\eta$), we have $|I_{e_i}(f_{\eta}^*)|\le\dim\eta\cdot I_{e_i}(f_i^*)=O(1)$ thanks to (\ref{A4}).\p

To prove (\ref{A7}), as in the proof of (\ref{A1}) in Section \ref{secRR} we rely on (\ref{RR2}) and Lemma \ref{ted}. Here we use the same $y$ here for the $y$ in the setting of (\ref{RR2}). We have $G_{y,\ge0}\bsl G/P\cong W_y\bsl W$ where $P\subset G$ is the parabolic associated to $e_1$, i.e. a Borel subgroup. For any of the double cosets, the measure $\mu_{even}({}^{\lambda}_{\ge 2}\Lg_{y,>0})$ in Lemma \ref{ted} is $1+O(q^{-1/2})$. Here $\lambda$ is the cocharacter associated to the regular nilpotent $e_1$. In particular ${}^{\lambda}_{\ge 2}\Lg={}^{\lambda}_{>0}\Lg$. For every (conjugacy class of) $\alpha\in A(\bar{e})$, let $\bar{e}_{\alpha}$ be an arbitrary choice of an element in $(\Ad(\bG_{y,0})\bar{e})(k)$ of class $\alpha$, i.e. such that $\tau_{\bar{e}_{\alpha}}=\alpha$. For any double coset in $G_{y,\ge0}\bsl G/P$, the contribution (see (\ref{RR2})) to $I_{e_1}(f_{\eta}^*)$ is, up to the $(1+O(q^{-1/2}))$-factor $\mu_{even}({}^{\lambda}_{\ge 2}\Lg_{y,>0})$, given by
\begin{equation}\label{regterm}
\sum_{\epsilon\in(\Ad(\bG_{y,0})\bar{e})(k)\cap{}^{\lambda}_{>0}\Lg_{y,0}}f_{\eta}^*(\epsilon)=q^{-\frac{1}{2}\dim\Ad(\bG_{y,0})\bar{e}}\cdot\sum_{\alpha\in A(\bar{e})/conj}\Tr(\eta(\alpha))\cdot|\Ad(G_{y,0})\bar{e}_{\alpha}\cap{}^{\lambda}_{>0}\Lg_{y,0}|.
\end{equation}
We have
\[|\Ad(G_{y,0})\bar{e}_{\alpha}\cap{}^{\lambda}_{>0}\Lg_{y,0}|\cdot\frac{|G_{y,0}|}{|{}^{\lambda}_{\ge0}G_{y,0}|}=|\CB_{\bar{e}_{\alpha}}(k)|\cdot\frac{|G_{y,0}|}{|Z_{G_{y,0}}(\bar{e}_{\alpha})|}\]
since both sides counts the number of $(\bar{e}',\mathfrak{b})$ where $\bar{e}'\in\Ad(G_{y,0})\bar{e}_{\alpha}$ and $\mathfrak{b}\subset\Lg_{y,0}$ is a $k$-Borel subalgebra so that $\bar{e}'\in\mathfrak{b}$. Rearranging the last identity gives
\begin{equation}\label{e0}
|\Ad(G_{y,0})\bar{e}_{\alpha}\cap{}^{\lambda}_{>0}\Lg_{y,0}|=\frac{|{}^{\lambda}_{\ge0}G_{y,0}|}{|Z_{G_{y,0}}(\bar{e}_{\alpha})|}\cdot|\CB_{\bar{e}_{\alpha}}(k)|
\end{equation}
We proceed to estimate the three terms on the RHS. Firstly, by (\ref{basicest}) we have:
\begin{equation}\label{e1}
|{}^{\lambda}_{\ge0}G_{y,0}|=q^{\dim{}^{\lambda}_{\ge0}\bG_{y,0}}\cdot(1+O(q^{-1/2})).
\end{equation}
Next we look at the term $|Z_{G_{y,0}}(\bar{e}_{\alpha})|$. The abstract group of geometric components of $Z_{\bG_{y,0}}(\bar{e}_{\alpha})$ is the abstract group $A(\bar{e})\times\Spec\bar{k}$. Recall that we assume (by base change) $\Frob$ acts trivially on the latter group. Recall that $\tau_{\bar{e}_{\alpha}}=\alpha$. Choose a cocycle that represent $\tau_{\bar{e}_{\alpha}}$, i.e. choose a representative of $\alpha$ in its conjugacy class. Then $\Frob$ acts on the group of geometric components of $Z_{G_{y,0}}(\bar{e}_{\alpha})$ by this representative of $\alpha$. Consequently the number of $\Frob$-stabilized geometric components of $Z_{G_{y,0}}(\bar{e}_{\alpha})$ is equal to the order of the centralizer of a representative of $\alpha$, which we denote as $|Z_{A(\bar{e})}(\alpha)|$. Namely
\begin{equation}\label{e2}
|Z_{G_{y,0}}(\bar{e}_{\alpha})|=|Z_{A(\bar{e})}(\alpha)|\cdot q^{\dim Z_{\bG_{y,0}}(\bar{e})}\cdot(1+O(q^{-1/2})).
\end{equation}
For the last term, note that the top-degree cohomology space of the Springer fiber above $\bar{e}$ is \cite[Theorem 1.5.1]{Yu16}
\[
H^{2\dim\CB_{\bar{e}}}(\CB_{\bar{e}})=\sum_{\eta'\in\Irr(A(\bar{e}))}\left(\eta'\right)^{\oplus\dim\rho_{\eta'}}.
\]
Since we assume that $\Frob$ acts trivially on the set of components of Springer fiber above $\bar{e}$ and thus trivially on $H^{2\dim\CB_{\bar{e}}}(\CB_{\bar{e}})$, it acts on $H^{2\dim\CB_{\bar{e}}}(\CB_{\bar{e}_{\alpha}})$ by the action of $\alpha$ on the representations $\eta'$.
By Grothendieck-Lefschetz trace formula (\ref{GL}) this implies
\begin{equation}\label{e3}
|\CB_{\bar{e}_{\alpha}}(k)|=q^{\dim\CB_{\bar{e}}}\cdot\left(\sum_{\eta'\in\Irr(A(\bar{e}))}\dim\rho_{\eta'}\cdot\Tr(\eta'(\alpha))\right)\cdot(1+O(q^{-1/2})).
\end{equation}
Plugging (\ref{e1}), (\ref{e2}) and (\ref{e3}) into (\ref{e0}) and then plug (\ref{e0}) into (\ref{regterm}), the contribution for each of the $|W|/|W_y|$ double cosets is, up to a $(1+O(q^{-1/2}))$-factor,
\[q^D\cdot\sum_{\eta'\in\Irr(A(\bar{e}))}\sum_{\alpha\in A(\bar{e})/conj}\frac{1}{|Z_{A(\bar{e})}(\alpha)|}\cdot\Tr(\eta(\alpha))\cdot\Tr(\eta'(\alpha))\cdot\dim\rho_{\eta'}\]
\[=q^D\cdot\sum_{\eta'\in\Irr(A(\bar{e}))}\delta_{\eta\eta'}\cdot\dim\rho_{\eta'}=q^D\cdot\dim\rho_{\eta}.\]
where
\[
\def\arraystretch{1.4}
\begin{array}{lrl}
D&:=&-\frac{1}{2}\dim\Ad(\bG_{y,0})\bar{e}+\dim{}^{\lambda}_{\ge0}\bG_{y,0}-\dim Z_{\bG_{y,0}}(\bar{e})+\dim\CB_{\bar{e}}\\
&=&-\frac{1}{2}\dim\Ad(\bG_{y,0})\bar{e}+\dim{}^{\lambda}_{\ge0}\bG_{y,0}-\dim Z_{\bG_{y,0}}(\bar{e})+\frac{1}{2}\dim Z_{\bG_{y,0}}(\bar{e})-\frac{1}{2}\rank\bG_{y,0}\\
&=&-\frac{1}{2}\dim\bG_{y,0}+\dim{}^{\lambda}_{\ge0}\bG_{y,0}-\frac{1}{2}\rank\bG_{y,0}=0.
\end{array}
\]
Hence $q^D=1$. Since there are $|W|/|W_y|$ many double cosets in $G_{y,\ge 0}\bsl G/P$. This proves (\ref{A7}) and finishes the proof of Theorem \ref{Steinberg}.\end{proof}\hp

\begin{corollary}\label{St2} We have $\dim\CX_{y,\gamma}=\dim\CX_{\gamma}$, and the number of $\CT$-orbits of top-dimensional components of $\CX_{y,\gamma}$ is equal to $|W|/|W_y|$.
\end{corollary}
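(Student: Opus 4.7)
The plan is to deduce this corollary directly from Theorem \ref{Steinberg} by stratifying $\CX_{y,\gamma}$ according to the reduction of $\Ad(g^{-1})\gamma$ modulo $\Lg_{y,>0}$. First I would observe that since $\gamma=t\gamma_0$ has depth $>1$, Lemma \ref{TN}(iii) implies that for any $g\in\CX_{y,\gamma}(\bar{k})$ the image $\theta(\Ad(g^{-1})\gamma)\in\Lg_{y,0}$ is nilpotent. Hence $\CX_{y,\gamma}$ decomposes as the disjoint union $\bigsqcup_{\bar{e}}\CX_{y,\bar{e},\gamma}$ of its locally closed strata, where $\bar{e}$ runs over a set of representatives of nilpotent $\bG_{y,0}(\bar{k})$-orbits in $\Lg_{y,0}$, and therefore $\dim\CX_{y,\gamma}=\max_{\bar{e}}\dim\CX_{y,\bar{e},\gamma}$ with the max taken over nonempty strata.

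Next I would apply Theorem \ref{Steinberg}: $\dim\CX_{y,\bar{e},\gamma}=\dim\CX_\gamma-d_{\bar{e}}$ with $d_{\bar{e}}\ge 0$, and equality $d_{\bar{e}}=0$ holds if and only if $\bar{e}$ is regular nilpotent. Under our characteristic assumptions the regular nilpotent orbit in $\Lg_{y,0}$ is unique, so fix a representative $\bar{e}_{\mathrm{reg}}$. Applying the second half of Theorem \ref{Steinberg} with $\bar{e}=\bar{e}_{\mathrm{reg}}$ and $\eta$ the trivial character of $A(\bar{e}_{\mathrm{reg}})$ yields
\[\dim H^{2\dim\CX_\gamma}_c(\CX_{y,\bar{e}_{\mathrm{reg}},\gamma}\times\Spec\bar{k})^{\CT}=\frac{|W|}{|W_y|}\dim\rho_1.\]
Since $\CB_{\bar{e}_{\mathrm{reg}}}$ is a single point, the identification of $H^{2\dim\CB_{\bar{e}}}(\CB_{\bar{e}})$ with $\bigoplus_{\eta'}(\eta')^{\oplus\dim\rho_{\eta'}}$ from \cite[Theorem 1.5.1]{Yu16} forces $\dim\rho_1=1$. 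Therefore the regular stratum is nonempty and contributes exactly $|W|/|W_y|$ many $\CT$-orbits of top-dimensional geometric components.

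The nonemptiness just established upgrades the bound $\dim\CX_{y,\gamma}\le\dim\CX_\gamma$ to an equality, proving the first assertion. For the second, every other stratum $\CX_{y,\bar{e},\gamma}$ has strictly smaller dimension (as $d_{\bar{e}}>0$ for non-regular $\bar{e}$), so every top-dimensional $\CT$-orbit of $\CX_{y,\gamma}$ must lie in $\CX_{y,\bar{e}_{\mathrm{reg}},\gamma}$, and the count $|W|/|W_y|$ transfers to $\CX_{y,\gamma}$ itself. I expect no real obstacle: the substantive ingredient, Theorem \ref{Steinberg}, is already in hand, and the remaining argument is essentially bookkeeping matching up the Springer stratification with the dimension/Euler-characteristic formulas; the only thing worth double-checking is the Springer-correspondence convention identifying $(\bar{e}_{\mathrm{reg}},\mathrm{trivial})$ with the one-dimensional representation of $W_y$.
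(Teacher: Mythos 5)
Your proposal follows the paper's own argument essentially step for step: stratify $\CX_{y,\gamma}$ by nilpotent orbits in $\Lg_{y,0}$ via Lemma~\ref{TN}(iii), invoke Theorem~\ref{Steinberg} for the dimension comparison, and observe that the regular nilpotent orbit is the unique stratum of full dimension $d$, contributing $\frac{|W|}{|W_y|}\dim\rho_{\mathrm{triv}}=\frac{|W|}{|W_y|}$ orbits of top-dimensional components. The only (correct but unstated) point that both you and the paper gloss over is that the regular stratum is open in $\CX_{y,\gamma}$ with complement of strictly smaller dimension, so the excision long exact sequence identifies $H^{2d}_c$ of the stratum with $H^{2d}_c$ of the whole ind-variety; your phrase ``the count transfers'' is precisely this observation, and it is worth saying explicitly. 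Your justification of $\dim\rho_{\mathrm{triv}}=1$ by observing $\CB_{\bar{e}_{\mathrm{reg}}}$ is a point is fine; the paper instead notes the regular nilpotent lies in a unique Borel, which is the same fact, and the worry about which one-dimensional $W_y$-representation it is (trivial vs.~sign) is immaterial since only the dimension enters.
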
\p

\begin{proof} Since $\depth(\gamma)>0$, by Lemma \ref{TN}(iii) the affine Springer fiber $\CX_{y,\gamma}$ can be stratified into $\CX_{y,\bar{e},\gamma}$ where $\bar{e}$ runs over nilpotent orbits in $\Lie\bG_{y,0}$. By Theorem \ref{Steinberg}, we have $\dim\CX_{y,\bar{e},\gamma}\le\dim\CX_{\gamma}$ with equality iff $\bar{e}$ is regular nilpotent in $\Lg_{y,0}$. When $\bar{e}$ is the regular nilpotent, it is contained in a unique Borel subalgebra and consequently $\rho_{triv}$ is the trivial representation of $W_y$ and has dimension $1$. The first two statements then follows from Theorem \ref{Steinberg}.
\end{proof}

We remark that when $y=o$ is the hyperspecial vertex the corollary is well-known. However it's only until \cite[Cor 4.16.2]{Ngo10} that we know $\CX_{o,\gamma}$ is equi-dimensional (i.e. irreducible). We have no idea whether the same could be true for general $y$.\p

Now we explain how Theorem \ref{Steinberg} reveals a similarity between components of $\CX_{\gamma}$ and components of the Steinberg variety. We have seen in Section \ref{secgeom} that the fiber of $\CX_{\gamma}\ra\CX_{y,\gamma}$ above a (geometric) point on $\CX_{y,\gamma}$ is a Springer fiber. In fact, let \[X:=\{(\bar{g},B)\;|\;B\subset\bG_{y,0}\text{ a Borel. }\bar{e}\in\Lie B\}\ra\Lie\bG_{y,0}\]
be the Grothendieck-Springer resolution. This induces $[X/\bG_{y,0}]\ra[\Lie\bG_{y,0}/\bG_{y,0}]$. We have a natural identification 
\begin{equation}\label{strata1}
\CX_{\gamma}=\CX_{y,\gamma}\times_{[\Lie\bG_{y,0}/\bG_{y,0}]}[X/\bG_{y,0}].
\end{equation}
On the other hand, consider
\begin{equation}\label{strata2}
X_{\bar{e}}:=X\times_{\Lie\bG_{y,0}}\Ad(\bG_{y,0})\bar{e}=\{(\bar{g},B)\;|\;B\subset\bG_{y,0}\text{ a Borel. }\bar{e}\in\Ad(\bG_{y,0})\bar{e}\cap\Lie B\}.
\end{equation}
We have
\begin{equation}\label{strata3}
\CX_{y,\bar{e},\gamma}=\CX_{y,\gamma}\times_{[\Lie\bG_{y,0}/\bG_{y,0}]}[\Ad(\bG_{y,0})\bar{e}/\bG_{y,0}].
\end{equation}
Write $\CX_{\bar{e},\gamma}:=\CX_{\gamma}\times_{\CX_{y,\gamma}}\CX_{y,\bar{e},\gamma}$. (We warn the reader that $\bar{e}\in\Lg_{y,0}$ but not $\Lg_{x,0}$ in general. In particular this is not $\CX_{x,\bar{e},\gamma}$.) It is characterized as a locally closed sub-ind-variety of $\CX_{\gamma}$ by
\[\CX_{\bar{e},\gamma}(k)=\{g\in G/G_{x,\ge 0}\;|\;\Ad(g^{-1})\gamma\in\Lg_{x,\ge 0}\cap\Lg_{y,\ge0}^{(\bar{e})}\}.\]
As $\CX_{y,\gamma}$ can be stratified into $\CX_{y,\bar{e},\gamma}$, $\CX_{\gamma}$ is also stratified into $\CX_{\bar{e},\gamma}$ where $\bar{e}$ runs over nilpotent orbits in $\Lie\bG_{y,0}$. This is a $\CT$-stable stratification. One may ask how many among the $|W|$ orbits of components sit on $\CX_{\bar{e},\gamma}$. The answer is\p

\begin{corollary} The number of $\CT$-orbits of components of $\CX_{\gamma}$ whose generic points sit in $\CX_{\bar{e},\gamma}$ is equal to
\[
\frac{|W|}{|W_y|}\sum_{\eta\in\Irr(A(\bar{e}))}\left(\dim\rho_{\eta}\right)^2.
\]
That is, $\frac{|W|}{|W_y|}$ times the sum of the squares of the dimensions of the Springer representations of $W_y$ that are associated to some local system on $\Ad(\bG_{y,0})\bar{e}$.
\end{corollary}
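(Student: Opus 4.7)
The plan is to count the top-dimensional components of $\CX_{\bar{e},\gamma}$ via the dimension of $\CT$-invariants in top compactly-supported cohomology, to decompose that cohomology along the proper map $\psi\colon\CX_{\bar{e},\gamma}\to\CX_{y,\bar{e},\gamma}$ whose geometric fibers are Springer fibers of $\bar{e}$, and then to invoke Theorem~\ref{Steinberg} on each summand.

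First, as in the proof of Theorem~\ref{Steinberg}, I would pass to a finite extension of $k$ so that $\Gal(\bar{k}/k)$ acts trivially on $A(\bar{e})$ and on the set of irreducible components of $\CB_{\bar{e}}$. By the general principle (\ref{basicest}), applied to the finite-dimensional quotient of $\CX_{\bar{e},\gamma}$ on which $\CT$ acts effectively, the number of $\Frob$-stable $\CT$-orbits of top-dimensional components of $\CX_{\bar{e},\gamma}$ equals $\dim H_c^{2d'}(\CX_{\bar{e},\gamma}\times\Spec\bar{k})^{\CT}$ where $d':=\dim\CX_{\bar{e},\gamma}$. Because $\psi$ is proper with each geometric fiber isomorphic to $\CB_{\bar{e}}$ (of pure dimension $d_{\bar{e}}$) and $\dim\CX_{y,\bar{e},\gamma}=d$ by Theorem~\ref{Steinberg}, one gets $d'=d+d_{\bar{e}}=\dim\CX_{\gamma}$, so these top-dimensional components of $\CX_{\bar{e},\gamma}$ are exactly the components of $\CX_{\gamma}$ whose generic points lie in $\CX_{\bar{e},\gamma}$.

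Next, using the Cartesian presentations (\ref{strata1})--(\ref{strata3}), the map $\psi$ is the base change under $\phi\colon\CX_{y,\bar{e},\gamma}\to[\Ad(\bG_{y,0})\bar{e}/\bG_{y,0}]$ of the stacky Grothendieck--Springer map $[X_{\bar{e}}/\bG_{y,0}]\to[\Ad(\bG_{y,0})\bar{e}/\bG_{y,0}]$. By proper base change, $R^{2d_{\bar{e}}}\psi_*\underline{\mathbb{Q}_\ell}$ is the $\phi$-pullback of the $\bG_{y,0}$-equivariant local system $R^{2d_{\bar{e}}}(X_{\bar{e}}\to\Ad(\bG_{y,0})\bar{e})_*\underline{\mathbb{Q}_\ell}$. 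By the classical Springer correspondence, the stalk of this equivariant local system is $H^{2d_{\bar{e}}}(\CB_{\bar{e}})=\bigoplus_\eta\eta^{\oplus\dim\rho_\eta}$ as an $A(\bar{e})$-module (the same decomposition invoked in deriving (\ref{e3})), and by construction of the $\CL_\eta^o$ preceding (\ref{sheaf0}) it decomposes on the stack as $\bigoplus_\eta(\CL_\eta^o)^{\oplus\dim\rho_\eta}$. Pulling back along $\phi$ yields
\[
R^{2d_{\bar{e}}}\psi_*\underline{\mathbb{Q}_\ell} \;\cong\; \bigoplus_{\eta\in\Irr(A(\bar{e}))}\CL_\eta^{\oplus\dim\rho_\eta}.
\]

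Finally, I would apply the Leray spectral sequence for the proper map $\psi$. The vanishings $R^i\psi_*\underline{\mathbb{Q}_\ell}=0$ for $i>2d_{\bar{e}}$ and $H_c^j(\CX_{y,\bar{e},\gamma}\times\Spec\bar{k},-)=0$ for $j>2d$ force only the corner $(i,j)=(2d,2d_{\bar{e}})$ to contribute in total degree $2d'$, so $E_2^{2d,2d_{\bar{e}}}=E_\infty^{2d,2d_{\bar{e}}}$ there. As the spectral sequence is $\CT$-equivariant, taking $\CT$-invariants gives
\[
H_c^{2d'}(\CX_{\bar{e},\gamma}\times\Spec\bar{k})^{\CT} \;\cong\; \bigoplus_{\eta}\bigl(H_c^{2d}(\CX_{y,\bar{e},\gamma}\times\Spec\bar{k},\CL_\eta)^{\CT}\bigr)^{\oplus\dim\rho_\eta},
\]
and Theorem~\ref{Steinberg} evaluates each factor as $\frac{|W|}{|W_y|}\dim\rho_\eta$, giving the claimed total $\frac{|W|}{|W_y|}\sum_\eta(\dim\rho_\eta)^2$. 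The main obstacle, I expect, is the sheaf-theoretic identification in the second step: verifying that the geometric monodromy on $R^{2d_{\bar{e}}}\psi_*\underline{\mathbb{Q}_\ell}$ matches the $A(\bar{e})$-action used to define $\CL_\eta^o$ through the map $\pi_1([\Spec k/Z_{\bG_{y,0}}(\bar{e})],\Spec k)\sra A(\bar{e})$ in the paragraph preceding (\ref{sheaf0}), so that the pullback decomposition is genuinely compatible with Springer's.
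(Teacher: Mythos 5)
Your proposal is correct and follows essentially the same route as the paper: count components via the dimension of top-degree $\CT$-invariant cohomology, identify the map to $\CX_{y,\bar e,\gamma}$ as a stacky base change of the Grothendieck--Springer map using (\ref{strata1})--(\ref{strata3}), decompose $R^{2d_{\bar e}}\psi_*\underline{\mathbb{Q}_\ell}$ into $\bigoplus_\eta\CL_\eta^{\oplus\dim\rho_\eta}$ by Springer theory, and then apply Theorem~\ref{Steinberg} to each summand. Your invocation of the Leray spectral sequence and the corner-degree vanishing argument simply spells out what the paper states implicitly when passing from $H^{2d+2d_{\bar e}}$ to $H^{2d}(-,R^{2d_{\bar e}}\pi_*\underline{\mathbb{Q}_\ell})$, and the compatibility concern you raise at the end is exactly the point the paper resolves by appealing to \cite[Theorem 1.5.1]{Yu16}.
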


\begin{proof}
The number of such orbits is $\dim H^{2d+2d_{\bar{e}}}(\CX_{\bar{e},\gamma}\times\Spec\bar{k})^{\CT}$. Since $\CX_{y,\bar{e},\gamma}$ has dimension $d$ and the fibers of $\pi:\CX_{\bar{e},\gamma}\times\Spec\bar{k}\ra\CX_{y,\bar{e},\gamma}\times\Spec\bar{k}$ has dimension $d_{\bar{e}}$, we have
\begin{equation}\label{top1}
H^{2d+2d_{\bar{e}}}(\CX_{\bar{e},\gamma}\times\Spec\bar{k},\underline{\mathbb{Q}_{\ell}})=H^{2d}(\CX_{y,\bar{e},\gamma}\times\Spec\bar{k},R^{2d_{\bar{e}}}\pi_*(\underline{\mathbb{Q}_{\ell}})).
\end{equation}
The top-degree derived push-forward $R^{2d_{\bar{e}}}\pi_*$ gives the monodromy of the components of the fibers of $\pi$. However, combining (\ref{strata1}), (\ref{strata2}) and (\ref{strata3}) we see that $\pi$ is a pullback of the Springer fiber map $[X_{\bar{e}}/\bG_{y,0}]\ra [\Ad(\bG_{y,0})\bar{e}/\bG_{y,0}]$. Springer theory says \cite[Theorem 1.5.1]{Yu16}
\begin{equation}\label{top2}
R^{2d_{\bar{e}}}\pi_*(\underline{\Q_{\ell}})=\sum_{\eta\in\Irr(A(\bar{e}))}\CL_{\eta}^{\oplus\dim\rho_{\eta}}.
\end{equation}
Combining (\ref{top1}) and (\ref{top2}) and taking $\CT$-invariant we arrive at
\begin{equation}\label{top3}
H^{2d+2d_{\bar{e}}}(\CX_{\bar{e},\gamma}\times\Spec\bar{k},\underline{\mathbb{Q}_{\ell}})^{\CT}=\sum_{\eta\in\Irr(A(\bar{e}))}H^{2d}(\CX_{y,\bar{e},\gamma}\times\Spec\bar{k},\,\CL_{\eta}^{\oplus\dim\rho_{\eta}})^{\CT}.
\end{equation}
According to Theorem \ref{Steinberg}, this implies the corollary.
\end{proof}\hp

The fact that the numbers in the corollary add up to $|W|$ corresponds to the fact that each representations of the Weyl group $W_y$ appears exactly once as a Springer representation. When $y=o$ is hyperspecial, $\bG_o=\bG$, and the number is the same as the number of components of the Steinberg variety of $\bG$ above the orbit $\Ad(\bG)\bar{e}$. Because of this, we expect an explicit construction of the orbits of components in Theorem \ref{main}, and expect the construction to be related to that for the Steinberg variety. We close by making two conjectures motivated by such expectation.

\begin{conjecture} Let $\gamma=t\gamma_0$ be as before. Suppose $\bG$ is simply connected. Consider $H^*(\CX_{\gamma}\times\Spec\bar{k})^{\CT}$ as a representation of the affine Weyl group \cite{Lu96}. Then its semi-simplification is the pull-back of the regular representation $\bar{\mathbb{Q}}_{\ell}[W]$.
\end{conjecture}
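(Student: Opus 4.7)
The plan is to combine the nilpotent stratification $\CX_\gamma = \bigsqcup_{\bar{e}} \CX_{\bar{e},\gamma}$ from the previous corollary with Lusztig's convolution-based construction of the $W_{aff}$-action on $H^*(\CX_\gamma)$. First, I would show that this action on $H^*(\CX_\gamma \times \Spec\bar{k})^{\CT}$ factors through the quotient $W_{aff} \sra W$: the translation subgroup $Q^\vee \subset W_{aff}$ corresponds to the action of cocharacters of a maximal torus of $\bG$, and after $F^{ur}$-base change these can be realized inside $\CT$ via an identification of $\bT$ with such a torus. Hence translations act trivially on $\CT$-invariants, the $W_{aff}$-action descends to a genuine $W$-action, and it suffices to identify its virtual character.

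Next, I would compute this character stratum-by-stratum. For each $y$ in the closure of the alcove containing $x$, the map $\CX_\gamma \to \CX_{y,\gamma}$ of (\ref{mapSpr}) has geometric fibers isomorphic to Springer fibers of $\bG_{y,0}$, so the decomposition theorem expresses its direct image as a sum of $\bG_{y,0}$-Springer sheaves pulled back along $\phi$. Theorem \ref{Steinberg} together with the previous corollary then identify, on the top $\CT$-invariant cohomology of the stratum $\CX_{\bar{e},\gamma}$, a contribution of dimension $\tfrac{|W|}{|W_y|}(\dim\rho_\eta)^2$ for each pair $(\bar{e},\eta)$ with $\bar{e}\in\Lg_{y,0}$ nilpotent and $\eta\in\Irr(A(\bar{e}))$. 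I would identify this contribution as the $W$-representation $\operatorname{Ind}_{W_y}^{W}(\rho_\eta \otimes \rho_\eta^{\vee})$, where the induction reflects the $|W|/|W_y|$ positions of $G_{y,\ge 0}/I$ in the affine flag variety parametrized by $W_y \backslash W$. Summing over all $(y,\bar{e},\eta)$ enumerating nilpotent orbits in $\Lg$ and invoking the identity $\sum_{\bar{e},\eta}(\dim\rho_\eta)^2 = |W_y|$ together with Frobenius reciprocity assembles the total into $\Ql[W]$.

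The main obstacle will be the explicit identification of the $W$-module structure on each stratum as $\operatorname{Ind}_{W_y}^{W}(\rho_\eta \otimes \rho_\eta^{\vee})$. This amounts to a compatibility between Lusztig's affine convolution action and the finite Springer action of $W_y$ on the cohomology of the fibers of $\CX_\gamma \to \CX_{y,\gamma}$, which should follow from realizing both actions through convolution in the affine flag variety and exhibiting the pullback of the $\bG_{y,0}$-Springer sheaf along $\phi$ as a $W_{aff}$-equivariant perverse sheaf carrying the expected $W_y$-structure. Finally, passing from the associated graded of the stratification to the semi-simplification of the whole representation should be controlled via the Frobenius weight filtration: the purity estimates implicit in Section \ref{secSha} indicate that the top-weight contributions assemble to exactly the predicted $\Ql[W]$ and that lower-weight contributions either vanish or are absorbed upon taking semi-simplification, matching the statement of the conjecture in the Grothendieck group.
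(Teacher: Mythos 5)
This statement is a \emph{conjecture} in the paper, not a proved result: the paper explicitly closes Section~\ref{secSt} with ``We close by making two conjectures motivated by such expectation,'' and offers no proof. So there is no ``paper's own proof'' to compare against; your write-up should be judged entirely on whether it would constitute a genuine proof of an open statement, and it does not yet.

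Your overall strategy is the natural one suggested by Theorem~\ref{Steinberg} and the corollary after it, and the dimension count you arrive at does match what the paper computes. But the two load-bearing steps are exactly the ones you flag as ``obstacles'' and then wave at, and neither is routine. First, the claim that the translation lattice $Q^\vee\subset W_{aff}$ acts through the left $\CT$-action (and hence trivially on $\CT$-invariants) is a serious compatibility assertion between Lusztig's convolution action and the centralizer action; it is not automatic from the existence of an identification $\bT\times\Spec F^{ur}\cong\bS\times\Spec F^{ur}$, because Lusztig's lattice action is by convolution by sheaves on the affine Grassmannian/flag variety, while the $\CT$-action is by left translation by a genuinely different group, and one must show these two a priori unrelated module structures coincide on the relevant sublattice. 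This requires an argument, not an identification of lattices. Second, the identification of the top $\CT$-invariant cohomology of the stratum $\CX_{\bar e,\gamma}$ with $\operatorname{Ind}_{W_y}^{W}(\rho_\eta\otimes\rho_\eta^\vee)$ as a $W$-module is precisely where the content lies; Theorem~\ref{Steinberg} and the corollary give you the \emph{dimensions} $\frac{|W|}{|W_y|}(\dim\rho_\eta)^2$, but matching the $W$-action to an induced representation would require establishing a compatibility between the affine convolution action and the finite Springer $W_y$-action along the fibers of $\CX_\gamma\to\CX_{y,\gamma}$, and that is the heart of the conjecture. Also, a small confusion: the stratification in Section~\ref{secSt} is by nilpotent orbits in $\Lie\bG_{y,0}$ for a \emph{fixed} $y$, and the identity $\sum_{\bar e,\eta}(\dim\rho_\eta)^2=|W_y|$ is Springer theory for $\bG_{y,0}$; your phrasing ``enumerating nilpotent orbits in $\Lg$'' conflates this with DeBacker's parametrization by varying $y$, which is a different count and would not be consistent with the $\sum=|W_y|$ step. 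Finally, $\operatorname{Ind}_{W_y}^W\bar{\Q}_\ell[W_y]=\bar{\Q}_\ell[W]$ is a basic fact about inducing regular representations, not Frobenius reciprocity.

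In short: the numerology is correct and the outline is the sensible one, but the two steps you defer --- the triviality of the translation action on $\CT$-invariants, and the identification of each stratum's $W$-module --- are precisely the open content, and this proposal does not close the conjecture.
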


\begin{conjecture} For any $\gamma\in\Lg$ regular semisimple, the number of $\CT$-orbits of components of $\CX_{\gamma}\times\Spec\bar{k}$ is less than or equal to the order of the Weyl group. The equality holds also when $\depth(\gamma)=1$.
\end{conjecture}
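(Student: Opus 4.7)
The plan is to reformulate the conjecture as an orbital integral estimate. Combining the GKM formula (\ref{GKMe}) with Bezrukavnikov's dimension formula (\ref{Bezr}), and using $\dim\bG_{x,0}=\rank\bG$, the exponent of $q$ in (\ref{GKMe}) collapses to $0$, so both parts of the conjecture are equivalent to
\[
I_\gamma^{st}(1_{\Lg_{x,\ge 0}})\le |W|+O(q^{-1/2}),
\]
with equality when $\depth(\gamma)=1$, for every regular semisimple $\gamma\in\Lg$ with $\CX_\gamma\ne\emptyset$. Here I use the extension of (\ref{Bezr}) to compact $\gamma$ (footnote \ref{rem}) and the fact that $\CX_\gamma\ne\emptyset$ forces $\gamma$ to be $G$-conjugate into $\Lie I=\Lg_{x,\ge 0}$, so we may assume $\gamma\in\Lg_{x,\ge 0}$ throughout.

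For the equality at $\depth(\gamma)=1$, I would apply DeBacker's Shalika germ expansion (Theorem \ref{De2}) directly to $\gamma$, in place of the element $\gamma_0:=t^{-1}\gamma$ used in Section \ref{secSha}. Since $\depth(\gamma)>0$ the expansion governs both $I_\gamma^{st}(1_{\Lg_{x,\ge 0}})$ and each $I_\gamma^{st}(f_i)$, and the matrix-inversion scheme of (\ref{21b}) still applies. The nilpotent-side estimates (\ref{A1})--(\ref{A4}) survive once the $t^{-1}$-dilation is dropped: the Ranga Rao computation of Section \ref{secRR} gives $I_{e_1}(1_{\Lg_{x,\ge 0}})=|W|$ directly, and Proposition \ref{comb} gives $I_{e_i}(1_{\Lg_{x,\ge 0}})=O(1)$ for $i\ge 2$. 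The proofs of (\ref{A5}) and (\ref{A6}) only use that $\gamma$ is topologically nilpotent and regular semisimple, so as to invoke Bezrukavnikov's transitivity of the $\CT$-action on the regular locus $O$, and they go through verbatim with $\gamma$ in place of $\gamma_0$. In fact the same argument yields equality for every $\gamma$ with $\depth(\gamma)>0$, slightly stronger than what the conjecture asks for.

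The remaining case is the inequality when $\depth(\gamma)\le 0$, where the Shalika-germ machinery no longer applies to $f=1_{\Lg_{x,\ge 0}}$. Here I would reduce to a parahoric reductive quotient: after $G$-conjugation assume $\gamma\in\Lg_{x,\ge 0}$, and let $y\in\CA_{\bS}$ lie in the closure of the alcove containing $x$ such that the reduction $\bar\gamma\in\Lg_{y,0}$ of $\gamma$ is regular semisimple in $\bG_{y,0}$. The map $\CX_\gamma\to\CX_{y,\gamma}$ from (\ref{mapSpr}), together with the natural stratification of $\CX_{y,\gamma}$ by $\bG_{y,0}$-orbits, should express $\CX_\gamma$ as a union of pieces indexed by double cosets related to $W_y\bsl W$, each controlled by a lower-depth affine Springer fiber for a Levi of $\bG_{y,0}$; an inductive argument on $\rank\bG$ (with base case $\bG_{y,0}$ a torus) should then yield the bound $|W|$.

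The main obstacle lies in this last reduction: matching the $\CT$-action on $\CX_\gamma$ with the centralizer action on the reduction is delicate because the N\'eron model of $\bT=Z_{\bG_F}(\gamma)$ can be non-split and have nontrivial component group, so $\CT$-orbits need not correspond cleanly to $Z_{\bG_{y,0}}(\bar\gamma)$-orbits after parahoric reduction. A conceptually cleaner route would be to establish the first conjecture of this section --- that the semisimplification of $H^*(\CX_\gamma\times\Spec\bar k)^{\CT}$ is the pullback of the regular representation of $W$ --- but this appears to require substantially more structural input than the orbital-integral method provides here.
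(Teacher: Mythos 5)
First, a framing issue: this statement is a \emph{conjecture} in the paper. The author does not prove it, and indeed the structure of Section~\ref{secSha} makes clear why the method stops short of it. So there is no proof in the paper to compare against; what needs to be evaluated is whether your argument closes the gap the author left open. It does not.

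Your reduction to the orbital-integral inequality via (\ref{GKMe}) and (\ref{Bezr}) is fine, and you are right that for $\depth(\gamma)=1$ the Shalika germ expansion with $r=0$ applies directly to $\gamma$ (since $\depth(\gamma)=1>0$) and to the test functions $1_{\Lg_{x,\ge 0}}$ and $f_i$. The gap is in your claim that the estimates ``survive once the $t^{-1}$-dilation is dropped.'' Replace the row vector in (\ref{21b}) accordingly: with $\gamma$ and $1_{\Lg_{x,\ge 0}}$ in place of $\gamma_0$ and $1_{\Lg_{x,\ge -1}}$, the entries for non-regular nilpotents become $I_{e_i}(1_{\Lg_{x,\ge 0}})=O(1)$ by Proposition~\ref{comb}, not $O(q^{-1/2})$. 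In the paper's argument the dilation is not cosmetic: $I_{e_i}(1_{\Lg_{x,\ge -1}})=q^{\frac12\dim\Ad(G)e_i}\cdot O(1)$, and dividing by $q^{\frac12\dim\Ad(G)e_1}$ yields $O(q^{(\dim\Ad(G)e_i-\dim\Ad(G)e_1)/2})=O(q^{-1})$ for $i\ge2$, which is precisely the source of the $O(q^{-1/2})$ suppression in the row vector that makes the final estimate close to $|W|+O(q^{-1/2})$. Without that factor the matrix computation produces only
\[
I_\gamma^{st}(1_{\Lg_{x,\ge 0}})=\Gamma_{e_1}^{st}(\gamma)\cdot|W|+\sum_{j\ge2}\Gamma_{e_j}^{st}(\gamma)\cdot O(1)=|W|+O(1),
\]
which, fed back into (\ref{GKMe}), gives $C_k=|W|+O(1)$ and cannot pin $C_k$ down. (Concretely: the zero nilpotent orbit already contributes $I_0(1_{\Lg_{x,\ge 0}})=1$, which is $\Theta(1)$, so the row vector genuinely fails to decay.) This also falsifies your parenthetical claim that ``the same argument yields equality for every $\gamma$ with $\depth(\gamma)>0$''; had that been true, the paper's author would hardly have singled out $\depth(\gamma)=1$ as conjectural rather than $\depth(\gamma)\in(0,1]$. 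The obstruction is not merely that $\gamma_0=t^{-1}\gamma$ has depth $0$; it is that there is no admissible choice of $(r,\text{test function})$ that simultaneously captures $\gamma$ of depth exactly $1$ and produces the needed decay in the nilpotent row vector.

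As for the general inequality, you yourself flag the parahoric-reduction route as incomplete and the $\CT$-action matching as delicate; that is an honest assessment, and it is the correct diagnosis of why the statement remains open. But without resolving it (and without fixing the $\depth(\gamma)=1$ gap above), the proposal does not constitute a proof.
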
\p

\appendix
\section{Assumption on $\mathrm{char}(k)$}\label{secchar}

Firstly, we require that $\mathrm{char}(k)>\rank\bG+1$. This will meet the assumption in \cite[Theorem 61]{Mc04} regarding orbital integrals over $F$ so that there are only finitely nilpotent orbits, and any orbital integral of a function in $C_c^{\infty}(\Lg)$ converges. The assumption in particular says $\mathrm{char}(k)$ is very good for $\bG$ and implies the existence of a $\bG$-invariant bilinear form on $\Lie\bG$. The same assumption above also implies that any isogeny of $\bG$ induces an isomorphism on the Lie algebra, and that Jordan decomposition holds for $\bG$ over $F$ \cite[Proposition 48]{Mc04}. Lastly, it implies that $p$ does not divide the order of the Weyl group, which in turns implies that all $F$-tori in $\bG\times\Spec F$ are tame, that is, any subtorus (equivalently, any maximal subtorus) of $\bG$ defined over $F$ is split over some tamely ramified extension of $F$. We also assume that the hypotheses in \cite[Section 4.2]{De02b} hold; those hypotheses hold when $\mathrm{char}(k)$ is larger than a constant multiple of $\rank\bG$.

\section{Reduction to finite fields}\label{appred}

In this appendix we prove

\begin{proposition} Let $N$ be an integer. If Theorem \ref{main} holds when the ground field $k$ is any finite field of characteristic $p>N$. Then Theorem \ref{main} also holds for any field $k$ of characteristic zero or $p>N$.
\end{proposition}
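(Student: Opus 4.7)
The strategy is a spreading-out argument: realize all the relevant data as a family over a finitely generated $\Z[1/N!]$-subalgebra $R\subset k$ (in characteristic zero) or a finite type $\mathbb{F}_p$-subalgebra $R\subset k$ (in characteristic $p>N$), specialize to a closed point whose residue field is finite of characteristic $>N$, apply the hypothesis there, and transfer the conclusion back to the generic fiber.

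The first step is to reduce $\gamma_0$ to a finitary datum. After $G$-conjugation we may assume $\gamma_0\in(\Lie\bG)(k[[t]])$ by Lemma \ref{TN}(iii). One shows that there exists an integer $M$, depending only on $\bG$ and $v(D(\gamma_0))$, such that any $\gamma_0'$ with $\gamma_0-\gamma_0'\in t^M(\Lie\bG)(k[[t]])$ yields an affine Springer fiber $\CT$-equivariantly isomorphic to $\CX_{t\gamma_0}$ together with its induced Galois action on the set of components. Replacing $\gamma_0$ by its truncation $\bar\gamma_0\in(\Lie\bG)(k[t]/t^M)$, its finitely many coefficients generate a subring $R\subset k$ of finite type over $\Z[1/N!]$ (respectively $\mathbb{F}_p$). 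Since $\bG$ is split, it and all parahoric data extend canonically to $R$.

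Second, interpret the family. Over $\Spec R$ one obtains an ind-scheme $\CX_{\bar\gamma_0,R}$ with an action of a centralizer group scheme $\CT_R$, whose fiber at any geometric point is the corresponding affine Springer fiber with its $\CT$-action. Combining Bezrukavnikov's dimension formula (\ref{Bezr}) with standard constructibility, the finite set of $\CT$-orbits of top-dimensional geometric components organizes, after possibly shrinking to a dense open of $\Spec R$, into a finite \'etale cover $\mathcal{C}\to\Spec R$. Any nonempty open of $\Spec R$ contains closed points $s_0$ with $\kappa(s_0)$ finite of characteristic $>N$; at such an $s_0$ the hypothesis of the proposition gives $|\mathcal{C}_{s_0}|=|W|$ with every element defined over $\kappa(s_0)$. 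Local constancy of the rank of a finite \'etale cover then forces $|\mathcal{C}|=|W|$ at the generic point, and the fact that each connected component of $\mathcal{C}$ has a $\kappa(s_0)$-section forces $\mathcal{C}\to\Spec R$ to split as a disjoint union of copies of $\Spec R$. Pulling back along $\Spec k\to\Spec R$ yields both conclusions of Theorem \ref{main} over $k$.

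The main obstacle is the quantitative truncation in the first step: establishing an effective $M$ such that $\gamma_0\bmod t^M$ determines $\CX_{t\gamma_0}$ up to $\CT$-equivariant isomorphism respecting the Galois action on components. One expects this from the fact that $\CX_{t\gamma_0}$ has only finitely many $\CT$-orbits of components, each cut out by finitely many equations in the coefficients of $\gamma_0$, but making this uniform requires care. A clean route is to use the dimension formula to embed representatives of those orbits inside an affine Schubert variety of bounded type, reducing everything to finite-type algebraic geometry from the outset.
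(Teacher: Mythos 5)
Your overall strategy --- spreading out to a finitely generated subring $R\subset k$, specializing to closed points of $\Spec R$, applying the finite-field case there, and transferring back to the generic fiber --- coincides with the paper's. However, your final step contains a genuine gap. You assert that because each connected component of the finite \'etale cover $\mathcal{C}\to\Spec R$ acquires a $\kappa(s_0)$-rational point at a single closed point $s_0$, the cover must split as a disjoint union of copies of $\Spec R$. This inference is false: a nontrivial connected finite \'etale cover can split completely in the fiber over one closed point, and even over a Zariski-dense set of them. For instance $\Spec\Z[i][\tfrac{1}{2}]\to\Spec\Z[\tfrac{1}{2}]$ is connected of degree $2$, yet its fiber over $(p)$ consists of two $\mathbb{F}_p$-points for every prime $p\equiv 1\pmod 4$. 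A $\kappa(s_0)$-point of $\mathcal{C}$ does not lift to a section of $\mathcal{C}\to\Spec R$. What your hypothesis actually supplies, and what the paper uses, is that the cover splits over \emph{every} closed point of $\Spec R$, after shrinking $R$ so that all residue characteristics exceed $N$; the correct finishing move is a higher-dimensional Chebotarev density theorem, which the paper explicitly invokes: Frobenius conjugacy classes at closed points are dense in the \'etale fundamental group of $\Spec R$, so triviality of all these Frobenii forces the monodromy representation (equivalently, the local system $R^{2d}\pi_*\underline{\Ql}$ that the paper works with via the finite-type quotient $\til{\Lambda}_\gamma\bsl\CX_\gamma$) to be trivial, hence constant of rank $|W|$ at the generic point.

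A secondary point concerns your initial reduction to a finitary datum. You truncate $\gamma_0$ modulo $t^M$ and lift, acknowledging yourself that the uniformity of $M$ and, more seriously, the behavior of the centralizer $\CT$ under such a perturbation ``requires care.'' The paper sidesteps this by first conjugating so that the maximal torus $Z_{\bG}(\gamma)$ is defined over $k(t)$, and then replacing $\gamma$ by a $t$-adically close element $\gamma'\in(\Lie Z_{\bG}(\gamma))(k(t))$ \emph{inside the Lie algebra of the same fixed torus}; by \cite[Prop.~2.1]{KL88} the affine Springer fiber is literally unchanged, $\CX_\gamma=\CX_{\gamma'}$, and since the torus is held fixed the $\CT$-equivariance and the Galois action on components are automatic. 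This removes the need for any effective bound $M$ and makes the spreading out of $Z_{\bG}(\gamma)$, $\Lambda_\gamma$ and $\til{\Lambda}_\gamma$ over $R$ straightforward. You should adopt this route.
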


\begin{proof} By conjugation we may assume that the maximal torus $Z_{\bG}(\gamma)$ is defined over $k(t)\subset k((t))=F$. We may also replace $\gamma$ by a sufficiently ($t$-adically) close element $\gamma'\in(\Lie Z_{\bG}(\gamma))(k(t))$ in the Lie algebra of the same torus. That $\gamma$ and $\gamma'$ is sufficiently close implies that for a large subvariety $X\subset\CX$ of the affine flag variety, $\CX_{\gamma}\cap X=\CX_{\gamma'}\cap X$. By \cite[Proposition 2.1]{KL88}, $\CX_{\gamma}$ (resp. $\CX_{\gamma'}$) is a union of translates of $\CX_{\gamma}\cap X$ (resp. $\CX_{\gamma'}\cap X$) by the centralizer once $X$ is large enough. This implies $\CX_{\gamma}=\CX_{\gamma'}$. We now replace $\gamma$ by $\gamma'$ and suppose $\gamma\in(\Lie Z_{\bG}(\gamma))(k(t))$. There is then a subfield $k'\subset k$ that is finitely generated (as a field) over its prime field such that $Z_{\bG}(\gamma)$ is defined over $k'(t)$ and $\gamma\in(\Lie Z_{\bG}(\gamma))(k(t))$. We now replace $k$ by $k'$ and assume $k$ is finitely generated over its prime field.\p

Let $S\subset k$ be a subring that is finitely generated as a ring over $\Z$ or $\mathbb{F}_p$, and such that $k$ is the quotient field of $S$. Let $R:=\Spec S$, on which the generic point is $\Spec k$. Since $\bG$ is split, we can extend it to be a split reductive group scheme over $R$. In the rest of the proof we will ``shrink $R$'' several times, which means inverting finitely many non-zero elements in $S$; every step of construction below is understood to begin with an implicit ``by shrinking $R$ if necessary,'' so that anything that is true on a Zariski open is true.\p

We may assume that any closed point on $R$ has characteristic $>N$. Choose any extension of $\gamma\in(\Lie\bG)(k(t))$ to an element in $(\Lie\bG)(S(t))$. We suppose (by shrinking) that $Z_{\bG}(\gamma)\subset\bG$ is a maximal torus over $R$. We may then extend the affine Springer fiber $\CX_{\gamma}$ over $R$, so that the fiber on each closed point is indeed an affine Springer fiber. This is done as follows: The affine Springer fiber is a closed sub-ind-variety of the affine flag variety for $\bG$, which can be constructed (say following \cite[Theorem 1.4]{PR08}) as a locally closed subscheme in a product of affine Grassmannians of $GL_n$. The latter is constructed as an inductive limit of Springer fibers of $GL_{n'}$, which is a closed subscheme of Grassmannian of $GL_n$. The last object is defined over $\Z$ and thus over $R$. The construction cutting out the affine Springer fiber within the product of affine Grassmannians works on a sufficiently small open subset of $R$.\p

Let $\Lambda_{\gamma}:=\{\lambda(t)\;|\;\lambda:\mathbb{G}_m/_{F^{ur}}\ra Z_{\bG\times\Spec F^{ur}}(\gamma)\text{ a cocharacter}\}$. Then $\Lambda_{\gamma}$ is an \'{e}tale sub-group scheme of $\CT$ over $\Spec k$ that acts freely on the $\CX_{\gamma}\times\Spec k$ (see e.g. \cite[3.3.2]{Yu16}). We extend $\Lambda_{\gamma}$ to $R$, and form the quotient $\Lambda_{\gamma}\bsl \CX_{\gamma}$ over $R$. Over $\Spec k$, there is a larger \'{e}tale sub-group scheme $\til{\Lambda}_{\gamma}\subset\CT$ \cite[3.4.11]{Yu16} that contains $\Lambda_{\gamma}$ with finite index such that $\til{\Lambda}_{\gamma}\sra\pi_0(\CT)$ is surjective. Since $\CT$ acts on $H^*(\CX_{\gamma}\times\Spec\bar{k})$ through $\pi_0(\CT)$, we have $H^*(\CX_{\gamma}\times\Spec\bar{k})^{\CT}=H^*((\til{\Lambda}_{\gamma}\bsl\CX_{\gamma})\times\Spec\bar{k})$, where $\til{\Lambda}_{\gamma}\bsl\CX_{\gamma}$ is constructed as a finite quotient of $\Lambda_{\gamma}\bsl \CX_{\gamma}$. We now extend $\til{\Lambda}$ over $R$ and have $\til{\Lambda}_{\gamma}\bsl\CX_{\gamma}$ over $R$. Since $\til{\Lambda}_{\gamma}\sra\pi_0(\CT)$ holds over the generic point $\Spec k$ we may assume that it holds on all points on $R$. Since affine Springer fibers have an algebraic dimension formula \ref{Bezr}, we may assume all fibers (as affine Springer fibers) have the same dimension $d$.\p

Let $\pi:\til{\Lambda}_{\gamma}\bsl\CX_{\gamma}\ra R$ be the natural map. Then $R^{2d}\pi_*\underline{\Ql}$ is a constructible sheaves on $R$, which we may assume to be a local system. By proper base change, we would like to prove that this local system on the generic point $\Spec k$ is constant with rank $|W|$. Theorem \ref{main} shows that this local system is constant with rank $|W|$ when restricted to any closed point on $R$. The result now follows from a higher-dimensional Chebotarev density theorem.\end{proof}

We remark that the same proposition holds for Theorem \ref{Steinberg}. The same proof applies as the nilpotent element $\bar{e}$ and the component group $A(\bar{e})$ both extend to $R$.



\p
\bibliographystyle{amsalpha}
\bibliography{biblio}
\p

\end{document}